\def\:{\thinspace:\thinspace}
\newtheorem{theo}{Theorem}
\newtheorem{lemma}[theo]{Lemma}
\newtheorem{prop}[theo]{Proposition}
\newtheorem{cor}[theo]{Corollary}
\newtheorem{rem}[theo]{Remark}
\numberwithin{equation}{section}
\numberwithin{theo}{section}
\def\:{\thinspace:\thinspace}
\theoremstyle{definition}
\numberwithin{theo}{section}
\DeclareMathOperator{\Ker}{Ker}
\DeclareMathOperator{\id}{id}
\DeclareMathOperator{\Id}{Id}
\DeclareMathOperator{\Real}{Re}
\DeclareMathOperator{\Ima}{Im}
\def\@secappcntformat#1{%
 \ifappendix \rm\appendixname\ifoneappendix\else~\fi\fi 
 \ifoneappendix \else \csname the#1\endcsname\relax\fi
 \ifx\apphe@d\@empty \else .\fi\enskip 
}
 \author{Delio Mugnolo}
 \address{Delio Mugnolo, Institut f\"ur Analysis, Universit\"at Ulm, 89069 Ulm, Germany}
\email{delio.mugnolo@uni-ulm.de}
\author{Serge Nicaise}
\address{Serge Nicaise, Universit\'e de Valenciennes et du Hainaut Cambr\'esis, LAMAV, FR CNRS 2956, ISTV, F-59313 - Valenciennes Cedex 9, France}
\email{Serge.Nicaise@univ-valenciennes.fr}
\title[Heat and wave equations with non-local conditions]{Well-posedness and spectral properties of\\ heat and wave equations with non-local conditions}
\subjclass[2000]{47D06, 35J20, 34B10}
\keywords{Non-local conditions; Analytic semigroups; Quadratic forms; Weyl asymptotics}
\begin{document}

\begin{abstract}
We consider the one-dimensional heat and wave equations but -- instead of boundary conditions~-- we impose on the solution certain non-local, integral constraints. An appropriate Hilbert setting leads to an integration-by-parts formula in Sobolev spaces of negative order and eventually allows us to use semigroup theory leading to analytic well-posedness, hence sharpening regularity results previously obtained by other authors. In doing so we introduce a parametrization of such integral conditions that includes known cases but also shows the connection with more usual boundary conditions, like periodic ones. In the self-adjoint case, we even obtain eigenvalue asymptotics of so-called Weyl's type.
\end{abstract}

\maketitle

\section{Introduction}

Several problems in the applied sciences are modeled by partial differential equations on domains that, as such, require the modeler to impose some assumption on the sought-after solution in order to obtain well-posedness in some function space. Typical are, of course, boundary conditions like Dirichlet or Neumann ones. However, in many physical problems certain different constraints are natural: for example, all equations that are derived from conservation laws -- like the Cahn--Hilliard equation or the Navier--Stokes equation -- admit the conservation of a certain physical quantity (e.g.\ mass, barycenter, energy, or momentum) and it is natural to wonder whether already these \emph{minimal} constraints suffice to obtain well-posedness, at least for a special choice of the initial conditions. While this idea seems to be widely applicable, for simplicity  we restrict this paper to the heat and the wave equations on an interval. Instead of usual boundary conditions on the solution $u$, we investigate the role of non-local, integral conditions like
\begin{equation}\label{bouzzcond}
\int_0^1 u(t,x)dx=0,\qquad t\ge 0,
\end{equation}
This amounts to imposing that the moment of order 0 (corresponding e.g.\ to the total mass, in the case of a diffusion equation for which $u$ denotes the relative density of a mixture) vanishes identically.
A heat equation complemented by the above condition has been introduced by J.R.~Cannon in~\cite{Can63}, where well-posedness was investigated by methods based on abstract Volterra equations. While Cannon's work has received much attention by numerical analysts, it has gone largely overlooked by the PDE community, with some notable exceptions (cf.~\cite{IonMoi77,Yur86,Shi93} and the references therein to earlier Soviet literature). The fact that~\eqref{bouzzcond} only eliminates one degree of freedom still forced Cannon and later investigators to impose a local (say, Dirichlet) condition at one of the endpoints.

More recently, it has been observed that the local condition at 0 or 1 may be dropped and replaced by another condition on the moment of order 1, like
\begin{equation}\label{bouzzcond2}
\int_0^1 (1-x)u(t,x)dx=0,\qquad t\ge 0.
\end{equation}
Wave and heat equations with (generalizations of) conditions~\eqref{bouzzcond} and~\eqref{bouzzcond2} have been intensively studied by A.~Bouziani and L.S.~Pul'kina in a long series of papers that seems to begin with~\cite{Pul92} and~\cite{BouBen96}. In~\cite{Bou02b}, a condition on the moment of order 2 is discussed. In fact, over the last 20 years Bouziani, Pul'kina and their coauthors have discussed a manifold of hyperbolic, parabolic and pseudoparabolic equations with such conditions, mostly by numerical methods. Among others, in~\cite{Bou04,DaiHua07,GueBel09} several weaker well-posedness results for related parabolic, hyperbolic or pseudoparabolic equations have been obtained by different methods.
An extensive list of further papers treating these or similar conditions can be found in the introduction of~\cite{Deh05}. A few tentative extensions of the above conditions for heat or wave equations on higher dimensional domains have been proposed in the literature, cf.~\cite{MesBou01,Pul07}. We are not aware of earlier investigations about the possibility of replacing a condition on the moment of order one by a condition on some moment of higher order. In the companion papers~\cite{MugNic13b,MugNic14} we have further developed our techniques in order to address these issues.

\bigskip
The main goal of this article is to provide an abstract framework -- as general as possible --for studying the one-dimensional heat or wave equation with integral conditions by means of semigroup theory.
{It turns out that, for the spatial operator we are considering, the associated diffusion equation is the gradient flow} of a very simple functional -- up to lower order terms, it is simply the $L^2$-norm -- with respect to some $H^{-1}$-type inner product. This is not surprising: for example, it is well-known that the gradient flow associated to the $L^p$-norm with respect to the $H^{-1}(0,1)$-inner product is the porous medium equation with Dirichlet boundary conditions. Also the observation that replacing $H^{-1}(0,1)$ by $H^{-1}(T)$ permits us to realize the diffusion equation with integral conditions as a gradient flow is not entirely new: for example, it is implicitly used in several articles by F.\ Otto (see e.g.~\cite{KohOtt02}) to discuss some modifications of the Cahn--Hilliard equation. However, by some direct computations performed in Section~\ref{sec:homog} it turns out that the $H^{-1}(T)$-norm, although natural, gives rise to a gradient flow that agrees with the usual diffusion equation only in a suitable quotient space. This surprising fact seems to show that the usual second derivative is not suitable to be endowed with (nonlocal) moment conditions. Indeed, also the associated second order (in time) evolution equation is not the classical wave equation, but rather a non-trivial generalization that coincides with the usual one only for smooth initial data, cf.\ Theorem~\ref{wellp2}. In the case of the heat equation, instead, this phenomenon can actually be overcome by invoking the smoothing properties of the analytic semigroups yielded by our approach.

The main difficulty associated with our variational approach is that less regular functions suffer a dramatical deterioration of their properties when integrated by parts even against smooth functions. In turn, this yields that the evolution equation associated with the above mentioned gradient flow is, seemingly, an exotic parabolic equation with no evident physical interpretation, see e.g.\ Theorem~\ref{identK3}. In order to get rid of these effects, in all the above mentioned papers (see e.g.~\cite{CanEstvan87,BouMerBen08}) the initial data were assumed to have an artificially strong regularity (and the solutions were only shown to satisfy the equation in a weak sense). Our semigroup approach allows us to avoid this problem and to obtain a classical solution even for rough initial data, cf.~Theorem~\ref{wellp2}.

Let us finally emphasize one of the advantages of our approach: It is known that as soon as an evolution equation with homogeneous boundary conditions is governed by a $C_0$-semigroup, one can extend the functional setting to find a solution to the same evolution equation with \emph{inhomogeneous} boundary conditions that is still given by a $C_0$-semigroup. This classical method is recalled in Remark~\ref{rem:inhomog} and shows in turn why it essentially suffices to focus on the condition~\eqref{bouzzcond}, whose interpretation as a condition on the \emph{mass} may appear less physical at a first glance, as it implies the existence of regions with negative density.

\bigskip
The paper is organized as follows. In Section~\ref{sec:bouz} we introduce the basic notations and prove some technical lemmata along with an integration-by-parts-type formula that will prove quite useful, since we are forced to work in Sobolev spaces of negative order so that the usual Gau{\ss}--Green formulae do not apply immediately.

We further discuss a class of quasi-accretive extensions of the second derivative with non-local constraints, {but due to technical reasons we have to tackle two subcases}: the analysis of the corresponding parabolic and hyperbolic problems will be performed in Section~\ref{sec:homog}. By introducing two parameters (a subspace $Y$ of $\mathbb C^2$ and a $2\times2$-matrix $K$, respectively) we are able to treat an infinite class of non-local constraints.

In some particular cases, the spatial operator is even self-adjoint. In Section~\ref{sec:spectral} we are able to describe the spectrum of several self-adjoint extensions of the ``minimal'' second order derivative, by more or less elementary techniques. We can show among other things that eigenvalue asymptotics of so-called Weyl's type is still valid for such a  class of operators. 

We end up with some direct extensions of our results to the case of dynamic integral conditions, see section~\ref{sec6}, which in turn by known semigroup theoretical methods allow us to treat \emph{inhomogeneous} integral conditions.

\bigskip
As observed above, it seems that following the original article by Cannon, in the literature the attention has been devoted mostly to problems in which mixed boundary/integral conditions are considered, see e.g.~\cite{Shi93} as well as~\cite[\S7.5]{Can84} and references therein. It should be emphasized that this kind of problems is slightly different from ours: it seems impossible to fix our parameters $Y,K$ in such a way that a homogeneous Dirichlet condition at one endpoint can be recovered. Nor we are able to treat generalizations of~\eqref{bouzzcond} that have enjoyed some popularity in the literature, like \emph{time-dependent}, mixed boundary/integral constraints of the form
\begin{equation*}\label{bouzzcondplus}
u(t,0)=0\qquad \hbox{and}\qquad \int_0^{b(t)} u(t,x)dx=0,\qquad t\ge 0,
\end{equation*}
for some given function $b$ that satisfies a suitable smoothness assumption, as in the original paper by Cannon~\cite{Can63}, or even
\begin{equation*}\label{bouzzcondplus2}
\int_0^{b(t)} u(t,x)dx=\int_{b(t)}^1 u(t,x)dx=0,\qquad t\ge 0,
\end{equation*}
as considered in~\cite{LesEllIng98}.

Conversely, it seems that the methods in the quoted articles cannot be adapted to our general setting. Furthermore, we could not find any previous reference to a classification of self-adjoint integral conditions as the one we perform in Section~\ref{sec:homog}. Most importantly, our operator theoretical approach is based on energy methods and $C_0$-semigroups. To the best of our knowledge, it is the first time these kinds of problems are studied in this framework.

Our technique presents quite a few advantages over alternative methods. First of all, we deliver a unified approach that allows us to treat simultaneously a whole class of integral constraints (including those in~\eqref{bouzzcond}-\eqref{bouzzcond2}); secondly, it suffices for us to prove well-posedness of the undamped wave equation to obtain automatically, by perturbation methods and the general theory of $C_0$-semigroups, well-posedness of, among others, the telegraph equation and the heat equation. We are also able to enlarge the space on which well-posedness is given, in comparison with previous literature: e.g., in~\cite{Bou04} the author needs to impose a compatibility condition on the first moment of the initial value, which we can instead drop. 

However, the most important by-product of the semigroup approach pursued by us is that valuable information becomes available about the solution operators, in comparison with other techniques: e.g., we obtain analyticity of the semigroup that governs the heat equation. This in turn yields the fact that the solution $u$ is smooth with respect to both the time and space variables and automatically satisfies additional boundary conditions, cf.\ Corollary~\ref{regulsol}. This should be compared with the much weaker regularity results obtained in~\cite[\S3]{CanEstvan87} or~\cite[\S5]{BouMerBen08} by means of a Galerkin method. Moreover, in this way we are also able to prove that solutions of the heat equation automatically satisfy infinitely many (non-local) \emph{boundary} conditions along with the integral ones. Also this observation, which is actually a straightforward consequence of our semigroup approach, seems to be new.

Finally, let us stress that form methods are often very efficient because they allow for an easy proof of further properties. In particular, one is often able to show via the so-called Beurling--Deny conditions that the semigroup, which is a priori only generated in a Hilbert space, actually extrapolates to a range of $L^p$-spaces. It seems however out that our functional framework does not allow for an effective application of the the Beurling--Deny method. A rather different approach has been developed by A.\ Bobrowski and the first author in order to turn our integral conditions into boundary ones: In this way, it is proved in~\cite{BobMug13} that the heat equation is well-posed in all $L^p(0,1)$-spaces, $p\in [1,\infty]$, and even in $C[0,1]$, if conservation of moments of order 0 and 1 is imposed.

{\bf Added in proof:} After the manuscript of this article was submitted we have learned from Fritz Gesztesy that some of the operators studied by us fall into the scope of Krein's theory of self-adjoint extensions, cf.~\cite{MugNic14} for details. In particular, our spectral results in Corollary~\ref{cWeylgenY}.(i) are weaker than those obtained in~\cite[\S~5]{AloSim80}, cf.~\cite[\S~6]{MugNic14} for further details.

\section{The Bouziani space}\label{sec:bouz}

If we consider $(0,1)$ as the torus $T$, then the test function set ${\mathcal D}(T)$ is in fact the set of smooth functions in $[0,1]$
such that the derivatives at all orders coincide at 0 and 1. In the same manner we will use the Sobolev space $H^1(T)$, by which we denote the subspace of those $u\in H^1(0,1)$ such that $u(0)=u(1)$ (i.e., of those $H^1$-functions supported on the torus). We denote by $H^{-1}(T)$ its dual.
{
In view of the decomposition 
\[
H^1(0,1)\ni \quad u\;\equiv\; \big(u-u(0)\id-u(1)1\big)\oplus u(0)\id +u(1)1\quad \in H^1_0(0,1)\oplus {\mathbb C}\id\oplus\; {\mathbb C}1
\]
where $\id$ is the identical function, one sees that $H^1_0(0,1)$ is a closed subspace of codimension 1 of $H^1(T)$ as well as a closed subspace of codimension 2 of $H^1(0,1)$. In particular, $H^1(T)=H^1_0(0,1)\oplus {\mathbb C}1$ and hence $H^1_0(0,1)$ is not dense in $H^1(T)$. Therefore, using $L^2(0,1)$ as a pivot space, it is not true that $H^{-1}(T)$ is continuously embedded in $H^{-1}(0,1)$. Indeed, the Dirac functional $\delta_1$ lies in $H^{-1}(T)$ and is not trivial there, but it agrees with the 0 functional in $H^{-1}(0,1)$. Hence, by duality $H^{-1}(T)$ can be identified with $H^{-1}(0,1)\oplus {\mathbb C}$ with respect to the orthogonal decomposition (for the inner product~\eqref{prodh1} defined below)
\begin{equation}\label{h1decomp}
H^{-1}(T)\ni \quad \phi\; \equiv\; \big(\phi-\mu_0(\phi)\delta_1)\oplus \mu_0(\phi)\delta_1\in H^{-1}(0,1)\oplus {\mathbb C} \delta_1.
\end{equation}
We thus regard $H^{-1}(0,1)$ as a closed subspace of $H^{-1}(T)$. Here $\mu_0$ is the linear functional on $\mathcal D'(T)$ defined by
\begin{equation}
\mu_0(\varphi):=\langle \varphi,1\rangle,\qquad \varphi\in \mathcal D'(T),\label{mu0mu1}
\end{equation}
which is  bounded on $H^{-1}(T)$. We will denote throughout by $\Id$ the orthogonal projection of $H^{-1}(T)$ onto $H^{-1}(0,1)$, and by $\Id_m$ its restriction to 
\begin{equation}\label{Hdefin}
H:=\{w\in H^{-1}(T)\, :\, \mu_0(w)=0\},
\end{equation}
which clearly is an isometric isomorphism. 
Observe that $\mu_0(u)$ is simply the mean value of $u$, whenever $u\in L^1(0,1)$, and with an abuse of terminology we will in fact say that $\phi\in {\mathcal D}'(T)$ has \emph{mean zero} whenever $\mu_0(\phi)=0$.
Observe that by construction
\begin{equation}\label{leqiso}
\Ker \Id ={\rm Span }\,\delta_1\ .
\end{equation}
}

Now for $\varphi\in {\mathcal D}(T)$, we can define a primitive of
$-\varphi+\int_0^1 \varphi(z)dz$
by
$$
J\varphi(x):=\int_x^1 \left(\varphi(y)-\int_0^1 \varphi(z)dz\right)dy,\qquad x\in (0,1),
$$
and therefore for $u\in {\mathcal D}'(T)$ we define its primitive $Pu\in {\mathcal D}'(T)$ by
\begin{equation}\label{0}
\langle Pu, \varphi\rangle:=\langle u, J\varphi\rangle,\qquad \varphi\in {\mathcal D}(T).
\end{equation}
Note that 
\begin{equation}\label{sergeajout2}
\langle Pu, 1\rangle=0,\qquad \forall u\in {\mathcal D}'(T).
\end{equation}
This means that $Pu$ is the unique primitive of $u$ of zero mean.

Now, by definition
$\langle (Pu)', \varphi\rangle=-\langle Pu, \varphi'\rangle=-\langle u, J\varphi'\rangle$.
But since $\varphi$ is periodic,
$$ J\varphi'(x)= \int_x^1\varphi'(y)dy= \varphi(1)- \varphi(x)\qquad\forall \varphi\in {\mathcal D}(T),$$
or rather
$$\langle (Pu)', \varphi\rangle=\langle u, \varphi\rangle-\varphi(1) \langle u, 1\rangle\qquad\forall \varphi\in {\mathcal D}(T).$$
Hence the identity
\begin{equation}\label{sergeajout1}
(Pu)'=u-\langle u,1\rangle \delta_1,\qquad \forall u\in {\mathcal D}'(T)
\end{equation}
holds. Although $(Pu)'$ is not necessarily equal to $u$, the name ``primitive of $u$'' for $Pu$ is justified by the fact that $(Pu)'=u$ on the space of distributions $u\in {\mathcal D}'(T)$ of zero mean and also on ${\mathcal D}'(0,1)$.
Note finally that
\begin{equation}\label{sergeajout3}
 P \delta_1=0 \qquad \hbox{in }{\mathcal D}'(T),
\end{equation}
since $\langle P\delta_1, \phi\rangle=\langle \delta_1,J\phi\rangle = J\phi(1)=0$
which is possibly surprising but is in accordance with~\eqref{sergeajout1}.

\begin{lemma}\label{lemma1}
Let $u\in L^{2}(0,1)$. Then $Pu$ is given by
\begin{equation}\label{2}
Pu(x)={\mathcal I} u(x)-\int_0^1 {\mathcal I} u(z) dz,\qquad \forall x\in (0,1),
\end{equation}
where
$$
{\mathcal I} u(x):=\int_0^x u(y)dy,\qquad x\in (0,1).
$$
In particular, $Pu\in H^1(0,1)$ and $(Pu)'=u$ in $ {\mathcal D}'(0,1)$.
Moreover, $P$ is bounded from $H^{-1}(T)$ to $L^2(0,1)$.
\end{lemma}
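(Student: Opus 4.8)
The plan is to treat the three assertions in turn: first establish the explicit formula~\eqref{2} for $u\in L^2(0,1)$ by an integration by parts against test functions, deduce the regularity and the identity $(Pu)'=u$ as immediate corollaries, and finally prove the boundedness of $P$ from $H^{-1}(T)$ to $L^2(0,1)$ by a duality-and-density argument.

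For the formula, I would fix $\varphi\in\mathcal D(T)$ and compute, using $u\in L^2(0,1)$ and the definition~\eqref{0},
\[
\langle Pu,\varphi\rangle=\langle u,J\varphi\rangle=\int_0^1 u(x)\,J\varphi(x)\,dx .
\]
Writing $\bar\varphi:=\int_0^1\varphi$, I record the facts that make the integration by parts clean: $(J\varphi)'=-(\varphi-\bar\varphi)$ and $J\varphi(1)=0$, together with $\mathcal I u(0)=0$ and $(\mathcal I u)'=u$. Since $\mathcal I u$ is absolutely continuous, I integrate by parts; the boundary terms vanish because $J\varphi(1)=0$ and $\mathcal I u(0)=0$, leaving $\int_0^1 \mathcal I u\,(\varphi-\bar\varphi)$. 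Expanding and using Fubini to rewrite $\bar\varphi\int_0^1\mathcal I u$ as $\int_0^1\big(\int_0^1\mathcal I u\big)\varphi$ turns this into the pairing of the right-hand side of~\eqref{2} against $\varphi$; since $\varphi$ was arbitrary, \eqref{2} follows. The ``in particular'' claims are then immediate: $\mathcal I u\in H^1(0,1)$ with $(\mathcal I u)'=u$, so $Pu$, differing from $\mathcal I u$ only by a constant, lies in $H^1(0,1)$ and satisfies $(Pu)'=u$ in $\mathcal D'(0,1)$.

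For boundedness, the key observation is that $J$ maps $\mathcal D(T)$ into $H^1_0(0,1)$: indeed $J\varphi(1)=0$ by definition and $J\varphi(0)=\int_0^1(\varphi-\bar\varphi)=0$ as well. Moreover, because $J\varphi(1)=0$ one estimates $\|J\varphi\|_{L^\infty}\le\|\varphi-\bar\varphi\|_{L^2}\le\|\varphi\|_{L^2}$ and $\|(J\varphi)'\|_{L^2}=\|\varphi-\bar\varphi\|_{L^2}\le\|\varphi\|_{L^2}$ (the last inequality since the constant $\bar\varphi$ is the $L^2$-orthogonal projection of $\varphi$ onto the constants), whence $\|J\varphi\|_{H^1(0,1)}\le C\|\varphi\|_{L^2}$ and, since $H^1_0(0,1)\hookrightarrow H^1(T)$, also $\|J\varphi\|_{H^1(T)}\le C\|\varphi\|_{L^2}$. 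Therefore, for $u\in H^{-1}(T)$,
\[
|\langle Pu,\varphi\rangle|=|\langle u,J\varphi\rangle|\le\|u\|_{H^{-1}(T)}\|J\varphi\|_{H^1(T)}\le C\|u\|_{H^{-1}(T)}\|\varphi\|_{L^2},\qquad \varphi\in\mathcal D(T).
\]
Since $\mathcal D(T)$ is dense in $L^2(0,1)$, the functional $\varphi\mapsto\langle Pu,\varphi\rangle$ extends to a bounded functional on $L^2(0,1)$, so by Riesz representation $Pu$ is represented by an $L^2(0,1)$-function with $\|Pu\|_{L^2}\le C\|u\|_{H^{-1}(T)}$.

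I expect the main obstacle to lie in the boundedness statement rather than in the formula. The crucial point is to notice that $J$ actually lands in $H^1_0(0,1)$ (both endpoints vanish, not just $x=1$), so that the $H^{-1}(T)$–$H^1(T)$ duality is legitimately applicable and Poincaré-type control by $\|(J\varphi)'\|_{L^2}=\|\varphi-\bar\varphi\|_{L^2}$ is available; the remainder is a routine density-plus-Riesz argument. A minor bookkeeping issue is the compatibility of the $H^1(T)$-norm from~\eqref{prodh1} with the ambient $H^1(0,1)$-norm on the subspace $H^1_0(0,1)$, but as these are equivalent the estimate is unaffected.
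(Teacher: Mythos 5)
Your proof is correct and takes essentially the same route as the paper's: the same integration by parts against $J\varphi$ with the vanishing boundary terms ${\mathcal I}u(0)=J\varphi(1)=0$ yields~\eqref{2}, and the same key observation that $J$ maps ${\mathcal D}(T)$ boundedly into $H^1_0(0,1)\subset H^1(T)$, combined with the $H^{-1}(T)$--$H^1(T)$ duality and the density of ${\mathcal D}(T)$ in $L^2(0,1)$, gives the boundedness. The only difference is that you spell out the elementary estimates $\|J\varphi\|_{L^\infty}\le\|\varphi\|_{L^2}$ and $\|(J\varphi)'\|_{L^2}\le\|\varphi\|_{L^2}$, which the paper leaves as an easy check.
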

\begin{proof}
First we remark that for $u\in L^{2}(T)$ and $\varphi\in {\mathcal D}(T)$, we have
$$
\langle Pu, \varphi\rangle=\int_0^1 ({\mathcal I} u)'(x) J\varphi(x)\,dx.
$$
Hence by integration by parts, we obtain
$$
\langle Pu, \varphi\rangle=\int_0^1 ({\mathcal I} u)(x) \left(\varphi(x)-\int_0^1 \varphi(z)dz\right)\,dx,
$$
since the boundary terms vanish due to $({\mathcal I} u)(0)= J\varphi(1)=0$. 
This shows~\eqref{2} and that $Pu$ belongs to $L^{2}(T)$.

In a second step we first easily check that for $\varphi\in {\mathcal D}(T)$,
$J\varphi$ is in $H^1(T)$ (it is even in $H^1_0(0,1)$) and that
$$
\|J\varphi\|_{H^1_0(0,1)}\lesssim \|\varphi\|_{L^2}.
$$
According to~\eqref{0} for any $u\in H^{-1}(T)$, we then get
$$
| \langle Pu, \varphi\rangle|\leq \| u\|_{H^{-1}(T)} \|J\varphi\|_{H^1_0(0,1)}\lesssim \| u\|_{H^{-1}(T)} \|\varphi\|_{L^2}, \forall \varphi\in {\mathcal D}(T).
$$
As ${\mathcal D}(T)$ is dense in $L^2(0,1)$, this shows that
$$\|Pu\|_{L^2}\lesssim \|u\|_{H^{-1}(T)},\qquad \forall u\in H^{-1}(T),$$
as we wanted to prove.
\end{proof}

Let us now introduce the spaces
$$V:=\left\{f\in L^2(0,1):\mu_0(f)=\mu_1(f)=0\right\}$$
and
$$\tilde{V}:=\left\{f\in L^2(0,1):\mu_0(f)=0\right\},$$
where $\mu_0$ is defined in \eqref{mu0mu1} and
\begin{equation}
\mu_1(f):=\int_0^1(1-x)f(x)\,dx 
=\int_0^1 \int_0^x f(z)dz\; dx.
\label{mu0mu12}
\end{equation}
With this notation,~\eqref{2} reads 
\begin{equation}\label{Pmu1}
Pu(\cdot)={\mathcal I}u(\cdot)-\mu_1(u),\qquad \forall u\in L^2(0,1),
\end{equation}
and moreover
\begin{equation*}
\mu_0({\mathcal I}u)=\mu_1(u),\qquad \forall u\in L^2(0,1).
\end{equation*}
Note that $\tilde V$ (resp. $V$) is the orthogonal complement in $L^2(0,1)$ of 
${\mathbb P}_0(\mathbb R)$) (resp. ${\mathbb P}_1(\mathbb R)$). (Here and in the following we denote by ${\mathbb P}_n({\mathbb R})$ the space of polynomials of one real variable with complex coefficients and degree $\le n$.)

\begin{cor}\label{rkpf}
The linear operator $P$ is bounded from $H^{-1}(T)$ to $\tilde{V}$, from $\tilde{V}$ to $H^1(T)$, and from $V$ to $H^1_0(0,1)$.
\end{cor}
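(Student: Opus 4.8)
The plan is to obtain all three mapping properties from the explicit representation~\eqref{Pmu1} of $Pu$ for $u\in L^2(0,1)$, together with the boundedness already secured in Lemma~\ref{lemma1}; the whole argument reduces to identifying the range of $P$ on each of the three domains and to a short trace computation at the endpoints $0$ and $1$.

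First I would treat the mapping $P\colon H^{-1}(T)\to\tilde V$. Lemma~\ref{lemma1} already gives that $P$ is bounded from $H^{-1}(T)$ into $L^2(0,1)$, so it only remains to check that the range lands in the closed subspace $\tilde V$, i.e.\ that $\mu_0(Pu)=0$ for every $u\in H^{-1}(T)$. But for such $u$ we have $Pu\in L^2(0,1)$, whence $\mu_0(Pu)=\langle Pu,1\rangle$, and this vanishes by~\eqref{sergeajout2}. Since $\tilde V$ carries the $L^2$-norm, the corestriction $P\colon H^{-1}(T)\to\tilde V$ is then bounded with the same norm.

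For the second and third mappings only $L^2$-data occur, since $V\subset\tilde V\subset L^2(0,1)$, so the pointwise formula~\eqref{Pmu1} applies directly. From $Pu(x)=\mathcal I u(x)-\mu_1(u)$ and $\mathcal I u(0)=0$, $\mathcal I u(1)=\mu_0(u)$ I read off the traces
\[
Pu(0)=-\mu_1(u),\qquad Pu(1)=\mu_0(u)-\mu_1(u),\qquad\text{so}\qquad Pu(1)-Pu(0)=\mu_0(u).
\]
Hence for $u\in\tilde V$ one has $\mu_0(u)=0$, so $Pu(0)=Pu(1)$ and $Pu\in H^1(T)$; for $u\in V$ one has in addition $\mu_1(u)=0$, so $Pu(0)=Pu(1)=0$ and $Pu\in H^1_0(0,1)$. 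The norm estimates follow from $(Pu)'=u$ (Lemma~\ref{lemma1}), which gives $\|(Pu)'\|_{L^2}=\|u\|_{L^2}$, together with $\|Pu\|_{L^2}\lesssim\|u\|_{L^2}$ coming either from Lemma~\ref{lemma1} or directly from~\eqref{Pmu1} via $|\mu_1(u)|\le\|u\|_{L^2}$ and the boundedness of $\mathcal I$ on $L^2$; thus $\|Pu\|_{H^1}\lesssim\|u\|_{L^2}$, and both $H^1(T)$ and $H^1_0(0,1)$ carry the $H^1$-norm.

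The argument is essentially bookkeeping, so I expect no serious obstacle; the only point requiring care is the first mapping, where the range statement rests on the distributional identity~\eqref{sergeajout2} rather than on the $L^2$-formula, and where one must invoke the genuinely non-trivial $H^{-1}(T)$-estimate of Lemma~\ref{lemma1}. In the other two cases one should simply make sure that evaluating the traces of $Pu$ is legitimate, which it is since $Pu\in H^1(0,1)$ embeds continuously into $C[0,1]$.
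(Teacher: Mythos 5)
Your proof is correct and follows essentially the same route as the paper: the first mapping property is deduced from Lemma~\ref{lemma1} together with~\eqref{sergeajout2}, and the second and third from the endpoint values $Pu(0)=-\mu_1(u)$ and $Pu(1)=\mu_0(u)-\mu_1(u)$ read off from~\eqref{Pmu1}. You merely spell out the norm estimates and trace justifications that the paper leaves implicit.
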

\begin{proof}
The first assertion directly follows from ~\eqref{sergeajout2} and the previous Lemma.
The second and third ones are a consequence of the properties
$$Pf(0)=-\mu_1(f)\qquad \hbox{and}\qquad Pf(1)=\mu_0(f)-\mu_1(f),$$
that are valid for any $f\in L^2(0,1)$.
\end{proof}

\begin{lemma}\label{lemma2}
We have 
\begin{equation*}\label{1}
 \|u\|_{H^{-1}(T)} \lesssim \|Pu\|_{L^2}+|\langle u, 1\rangle|, \qquad u\in H^{-1}(T).
\end{equation*}
\end{lemma}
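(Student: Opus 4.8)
The plan is to prove the estimate by \emph{duality}, representing the $H^{-1}(T)$-norm as a supremum of the pairing $\langle u,\phi\rangle$ over unit test functions $\phi\in H^1(T)$ and then using the primitive $P$ to move a derivative off $u$. Since $H^{-1}(T)$ is by definition the dual of $H^1(T)$ (with $L^2(0,1)$ as pivot), its norm — or an equivalent one, which is all we need for a $\lesssim$-estimate — is the dual norm
\[
\|u\|_{H^{-1}(T)}=\sup\bigl\{|\langle u,\phi\rangle|\;:\;\phi\in H^1(T),\ \|\phi\|_{H^1(T)}\le 1\bigr\}.
\]
So I would fix such a $\phi$ and estimate $|\langle u,\phi\rangle|$ by the right-hand side of the claimed inequality.

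The engine of the argument is the identity \eqref{sergeajout1}, which reads $u=(Pu)'+\langle u,1\rangle\,\delta_1$ in $\mathcal D'(T)$. Pairing it with $\phi$ and recalling $\langle\delta_1,\phi\rangle=\phi(1)$ gives
\[
\langle u,\phi\rangle=\langle (Pu)',\phi\rangle+\langle u,1\rangle\,\phi(1).
\]
By Lemma~\ref{lemma1} we have $Pu\in L^2(0,1)$, so the torus distributional derivative satisfies $\langle (Pu)',\phi\rangle=-\langle Pu,\phi'\rangle$, and Cauchy--Schwarz yields $|\langle Pu,\phi'\rangle|\le\|Pu\|_{L^2}\|\phi'\|_{L^2}\le\|Pu\|_{L^2}\|\phi\|_{H^1(T)}$. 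For the remaining term I would invoke the one-dimensional Sobolev embedding $H^1(0,1)\hookrightarrow C[0,1]$ to bound $|\phi(1)|\le\|\phi\|_\infty\lesssim\|\phi\|_{H^1(T)}$. Combining the two estimates gives $|\langle u,\phi\rangle|\lesssim\bigl(\|Pu\|_{L^2}+|\langle u,1\rangle|\bigr)\|\phi\|_{H^1(T)}$, and taking the supremum over the unit ball of $H^1(T)$ produces the asserted bound.

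The only genuinely delicate point — the main obstacle — is justifying the integration-by-parts formula $\langle (Pu)',\phi\rangle=-\langle Pu,\phi'\rangle$ for \emph{all} $\phi\in H^1(T)$ rather than merely for $\phi\in\mathcal D(T)$: it holds verbatim on $\mathcal D(T)$ by definition of the derivative of the distribution $Pu\in L^2(0,1)\subset\mathcal D'(T)$, and I would extend it to $H^1(T)$ by density, noting that both sides are continuous in $\phi$ for the $H^1(T)$-topology because $Pu\in L^2(0,1)$. Here one must keep track of the $\delta_1$-contribution, which is supported at the identified endpoints $0\sim1$ of the torus and is precisely what makes \eqref{sergeajout1} differ from the naive relation $(Pu)'=u$; apart from this bookkeeping the remaining steps are routine.
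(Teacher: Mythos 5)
Your proof is correct and takes essentially the same route as the paper's: both arguments rest on the identity \eqref{sergeajout1} in the form of the pairing formula $\langle u,\psi\rangle=-\langle Pu,\psi'\rangle+\psi(1)\langle u,1\rangle$, estimate the first term by Cauchy--Schwarz and the second via the Sobolev embedding $H^1(0,1)\hookrightarrow C[0,1]$, and conclude by density of ${\mathcal D}(T)$ in $H^1(T)$ together with the dual characterization of the $H^{-1}(T)$-norm. The only cosmetic difference is that the paper derives the pairing identity directly from the definition of $J$ applied to $\psi'$, whereas you obtain it by pairing \eqref{sergeajout1} with the test function -- the same computation in slightly different order.
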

\begin{proof}
For all $u\in H^{-1}(T)$ we have by~\eqref{sergeajout1}
\[ u=(Pu)'+\langle u, 1\rangle \delta_1 \quad\hbox{ in } H^{-1}(T).\]
Therefore, for $\psi\in {\mathcal D}(T)$, $\psi'\in {\mathcal D}(T)$ and has mean zero,
and hence
$J\psi'(x)=-\psi(x)+ \psi(1)$. Accordingly, by~\eqref{0}
$$
\langle Pu, \psi'\rangle=-\langle u, \psi-\psi(1)\rangle=-\langle u, \psi\rangle+\psi(1)\langle u,1\rangle,
$$
or equivalently
$$
\langle u, \psi\rangle=-\langle Pu, \psi'\rangle+\psi(1)\langle u,1\rangle=\langle D Pu, \psi\rangle+\psi(1)\langle u,1\rangle.
$$
That implies
$$
|\langle u, \psi\rangle|\leq |\langle Pu, \psi'\rangle|+|\psi(1)||\langle u,1\rangle|.
$$
By the Sobolev embedding theorem, we obtain
 $$
|\langle u, \psi\rangle|\lesssim (\| Pu\|_{L^2} +|\langle u,1\rangle|) \|\psi\|_{H^{1}(T)},\qquad \forall u\in H^{-1}(T),\; \psi \in {\mathcal D}(T).
$$
This leads to the conclusion by density.
\end{proof}

\begin{lemma}\label{lemmasecondderivative}
For all $f\in H^1(0,1)$ and any $c\in \mathbb C$, we have
$$
P(\Id _m^{-1}(f'')+c\delta_1)=f'-f(1)+f(0) 
$$
as an equality of $L^2$-functions.
\end{lemma}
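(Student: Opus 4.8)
The plan is to peel off the Dirac term, to identify $\Id_m^{-1}(f'')$ concretely as a functional on $H^1(T)$, and then to read off its primitive by testing against $\mathcal D(T)$ and integrating by parts. First I would use linearity of $P$ together with \eqref{sergeajout3} to observe that $P(c\delta_1)=cP\delta_1=0$, so that $P(\Id_m^{-1}(f'')+c\delta_1)=P(\Id_m^{-1}(f''))$. Hence the constant $c$ is immaterial, and it suffices to compute $Pw$ for $w:=\Id_m^{-1}(f'')$.

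The decisive step is to describe $w$. Since $f\in H^1(0,1)$, the distribution $f''\in H^{-1}(0,1)=(H^1_0(0,1))'$ acts by $\langle f'',\psi\rangle=-\int_0^1 f'\psi'$ for $\psi\in H^1_0(0,1)$. Under the identification \eqref{h1decomp} of $H^{-1}(0,1)$ with the subspace $H$ of \eqref{Hdefin}, the element $w=\Id_m^{-1}(f'')$ is precisely the mean-zero extension of this functional to $H^1(T)$; in particular it restricts to $f''$ on $H^1_0(0,1)$, that is, $\langle w,\psi\rangle=-\int_0^1 f'\psi'$ for every $\psi\in H^1_0(0,1)$. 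I would then invoke the definition \eqref{0}: for $\varphi\in\mathcal D(T)$ the test-primitive $J\varphi$ lies in $H^1_0(0,1)$ (as recorded in the proof of Lemma~\ref{lemma1}), whence
\begin{equation*}
\langle Pw,\varphi\rangle=\langle w,J\varphi\rangle=-\int_0^1 f'(x)\,(J\varphi)'(x)\,dx.
\end{equation*}
Using $(J\varphi)'=\mu_0(\varphi)-\varphi$ and $\int_0^1 f'=f(1)-f(0)$, the right-hand side equals $\int_0^1\big(f'(x)-f(1)+f(0)\big)\varphi(x)\,dx$. Since $Pw\in L^2(0,1)$ by Lemma~\ref{lemma1} and $\mathcal D(T)$ is dense in $L^2(0,1)$, this identity of pairings forces $Pw=f'-f(1)+f(0)$ as $L^2$-functions, which is the assertion.

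I expect the only genuine difficulty to be the middle step: correctly reading $\Id_m^{-1}(f'')$ as a distribution on the torus and observing that on $H^1_0(0,1)$ it coincides with $f''$. The crucial point is that $J\varphi$ always lands in $H^1_0(0,1)$, so the pairing never probes the difference between $H^{-1}(T)$ and $H^{-1}(0,1)$ carried by $\delta_1$; once this is understood, everything else reduces to a single integration by parts followed by a density argument.
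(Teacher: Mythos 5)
Your proof is correct, but it takes a genuinely different route from the paper's. The paper never unwinds $\Id_m^{-1}(f'')$ as a concrete functional: writing $g:=\Id_m^{-1}(f'')$, it observes that $g$ has mean zero, so $(Pg)'=g$ in ${\mathcal D}'(T)$ by~\eqref{sergeajout1}; since $g=f''$ in ${\mathcal D}'(0,1)$ by the very definition of $\Id_m$, it deduces $(f'-Pg)'=0$ in ${\mathcal D}'(0,1)$, hence $f'-Pg$ equals a constant $a$, which is pinned down as $a=\int_0^1 f'=f(1)-f(0)$ because $Pg$ has mean zero by~\eqref{sergeajout2} (the term $c\delta_1$ is killed via~\eqref{sergeajout3}, exactly as you do). You instead evaluate $\langle Pw,\varphi\rangle$ directly from the definition~\eqref{0}, which is why you need your ``decisive step'': identifying $w=\Id_m^{-1}(f'')$ as the mean-zero extension of $f''$ that acts as $\psi\mapsto-\int_0^1 f'\psi'$ on $H^1_0(0,1)$, and noting that $J\varphi$ always lands in $H^1_0(0,1)$, so the pairing is blind to the $\delta_1$-component; a single integration by parts and density of ${\mathcal D}(T)$ in $L^2(0,1)$ then finish. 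Your identification step is the explicit, spelled-out form of the paper's one-line appeal to the definition of $\Id_m$, and your pairing computation replaces the paper's ``derivative vanishes, hence constant, fixed by the normalization'' argument. Each approach has its merit: the paper's is shorter given the prepared identities~\eqref{sergeajout1}--\eqref{sergeajout3} and isolates cleanly where each normalization enters, while yours is more self-contained at the level of test functions and makes structurally transparent \emph{why} the ambiguity $c\delta_1$ is invisible, namely because the test primitives $J\varphi$ vanish at both endpoints.
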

\begin{proof}
For shortness we write $g:=\Id _m^{-1}(f'')$, which $g$ is well defined because $f''\in H^{-1}(0,1)$. By~\eqref{sergeajout1}
we have
$$
(Pg)'=g \quad\hbox{ in }{\mathcal D}'(T).
$$
By the definition of $\Id _m$ we have
$$
f''=g \quad\hbox{ in }{\mathcal D}'(0,1),
$$
and we deduce that
$$
(f'-Pg)'=0 \quad\hbox{ in }{\mathcal D}'(0,1).
$$
Hence there exists a constant $a\in \mathbb C$ such that
$$
f'-Pg=a \quad\hbox{ in } L^2(0,1).
$$
The conclusion follows from~\eqref{sergeajout2} and~\eqref{sergeajout3}.
\end{proof}

\begin{rem}\label{noidm}
Note that for $f\in H^2(0,1)$, we have
\begin{equation}\label{ajoutserge10}
\Id _m^{-1}(f'')=f''+(f'(0)-f'(1))\delta_1 \quad\hbox{ in } H^{-1}(T),
\end{equation}
and therefore
$$
P(\Id _m^{-1}(f''))=P(f'') \quad\hbox{ in } L^2(0,1).
$$
Hence, for functions regular enough the annoying term $\Id _m^{-1}u''$ can be safely replaced by $u''$. Indeed, the same holds for $f\in H^s(0,1)$ for all $s>\frac{3}{2}$, since then $f''\in H^{s-2}(0,1)$ and $1\in H^r_0(0,1)=H^r(T)$ for all $r<\frac{1}{2}$.
\end{rem}

In the same spirit, we have the next equivalence.

\begin{lemma}\label{lemmasecondderivative2}
Let $f\in H^1(0,1)$. Then 
$\Id _m^{-1}(f'')$ belongs to $L^2(0,1)$
if and only if
$f\in H^2(0,1)$ and $f'(0)=f'(1)$.
\end{lemma}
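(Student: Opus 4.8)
The plan is to prove the two implications separately; the converse is a direct consequence of Remark~\ref{noidm}, while the forward implication requires the primitive operator $P$ as the essential tool.

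The easy direction is $(\Leftarrow)$: if $f\in H^2(0,1)$ with $f'(0)=f'(1)$, then Remark~\ref{noidm} gives $\Id_m^{-1}(f'')=f''+(f'(0)-f'(1))\delta_1=f''$, which lies in $L^2(0,1)$ precisely because $f\in H^2(0,1)$.

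For the direction $(\Rightarrow)$ I would set $g:=\Id_m^{-1}(f'')$ and assume $g\in L^2(0,1)$. First I would apply Lemma~\ref{lemmasecondderivative} with $c=0$, which yields the key relation $Pg=f'-f(1)+f(0)$ as an identity of $L^2$-functions; conceptually, the singular correction is invisible to $P$ because $P\delta_1=0$ by~\eqref{sergeajout3}, so $Pg$ reproduces $f'$ up to an additive constant. Since now $g\in L^2(0,1)$, Lemma~\ref{lemma1} guarantees that $Pg\in H^1(0,1)$ is a genuine primitive of $g$, i.e.\ $(Pg)'=g$ in $\mathcal D'(0,1)$. On the other hand, differentiating the relation $Pg=f'-f(1)+f(0)$ gives $(Pg)'=f''$ in $\mathcal D'(0,1)$. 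Comparing the two expressions forces $f''=g\in L^2(0,1)$, whence $f'\in H^1(0,1)$ and therefore $f\in H^2(0,1)$. Having secured $f\in H^2(0,1)$, I can now invoke Remark~\ref{noidm} to write $g=\Id_m^{-1}(f'')=f''+(f'(0)-f'(1))\delta_1$ in $H^{-1}(T)$; since $f''=g$ as $L^2$-functions from the previous step, this leaves $(f'(0)-f'(1))\delta_1=0$ in $H^{-1}(T)$, and as $\delta_1$ is nontrivial there I conclude $f'(0)=f'(1)$.

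The main obstacle is the promotion of $f$ from $H^1$ to $H^2$: a priori $g=\Id_m^{-1}(f'')$ lives only in $H^{-1}(T)$ and may carry a singular $\delta_1$-component, so the regularity is not visible directly. The device that resolves this is the primitive $P$, which on the one hand annihilates the singular part (so that Lemma~\ref{lemmasecondderivative} identifies $g$ with the concrete $L^2$-function $f'$ up to a constant) and on the other hand acts as a true antiderivative on $L^2(0,1)$. Once these two facets of $P$ are matched, both the $H^2$-regularity and the boundary identity $f'(0)=f'(1)$ follow by differentiation and by reading off the $\delta_1$-coefficient, respectively.
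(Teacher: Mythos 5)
Your proof is correct, and its skeleton matches the paper's: the backward implication via \eqref{ajoutserge10} and the final extraction of $f'(0)=f'(1)$ by reading off the $\delta_1$-coefficient are exactly the paper's steps. Where you genuinely diverge is the regularity step promoting $f$ to $H^2(0,1)$. You establish $f''=g$ in ${\mathcal D}'(0,1)$ by routing through the primitive operator: Lemma~\ref{lemmasecondderivative} gives $Pg=f'-f(1)+f(0)$, Lemma~\ref{lemma1} gives $(Pg)'=g$ in ${\mathcal D}'(0,1)$ once $g\in L^2(0,1)$, and differentiating the first identity closes the loop. The paper instead observes in one line that $f''=\Id_m^{-1}(f'')$ in ${\mathcal D}'(0,1)$, which is immediate from the construction of $\Id_m$: by \eqref{h1decomp} and \eqref{leqiso} the two distributions differ only by a multiple of $\delta_1$, which vanishes on test functions supported in $(0,1)$; the assumption then directly yields $f''\in L^2(0,1)$. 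Your detour is fully sound --- the hypotheses of both lemmas are met, and the equality $f''=g$ in ${\mathcal D}'(0,1)$ does upgrade to equality in $H^{-1}(T)$ since both sides are the same $L^2$-function, which is what your application of Remark~\ref{noidm} needs --- but it re-derives with heavier machinery what the definition of $\Id_m$ already gives for free. What your route buys is self-containedness: one never has to unwind the identification \eqref{h1decomp}, because Lemmas~\ref{lemma1} and~\ref{lemmasecondderivative} encapsulate precisely the two facets of $P$ (annihilation of the singular part, genuine antiderivative on $L^2$) that make the singular correction visible or invisible as needed; what it costs is length, since the paper's version of the same step is a single sentence.
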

\begin{proof}
If $f\in H^2(0,1)$ is such that $f'(0)=f'(1)$, then 
the property $\Id _m^{-1}(f'')\in L^2(0,1)$ directly follows from \eqref{ajoutserge10}.
For the converse implication, we notice that 
$$
f''=\Id _m^{-1}(f'') \quad\hbox{ in }{\mathcal D}'(0,1).
$$
But by assumption the right-hand side of this identity belongs to $L^2(0,1)$, and therefore $f$ belongs to $H^2(0,1)$.
By~\eqref{ajoutserge10}, we get that
$(f'(0)-f'(1))\delta_1$ has to belong to $L^2(0,1)$, that is only possible if $f'(0)-f'(1)=0$.
\end{proof}

By Lemma~\ref{lemma2} the sesquilinear form defined by
\begin{equation}
\label{prodH}
(f,g)\mapsto \int_0^1 Pf(x) P\bar g(x) dx,\qquad f,g\in H,
\end{equation}
is an equivalent inner product on $H$. For this reason, we will always endow $H$ with the inner product 
$$(\cdot|\cdot)_H:=(P\cdot|P\cdot)_{L^2}$$ 
introduced in~\eqref{prodH}.

For an arbitrary subspace $Y$ of $\mathbb C^2$ we define
$$V_Y:=\{f\in L^2(0,1): (\mu_0(f),\mu_1(f))\in Y\}.$$
Obviously we have the inclusion $V\subset V_Y$ for any $Y$
and the identities $V=V_{\{0\}^2}$ and $\tilde V=V_{\{0\}\times \mathbb C}$.

For shortness we introduce the linear maps $\Gamma_1: L^2(0,1)\to \mathbb C^2$
and $\Gamma_2:H^2(0,1)\to \mathbb C^2$, defined by
\begin{equation}\label{gammagrand}
\Gamma_1 u:= \begin{pmatrix}
\mu_0(u)\\ \mu_1(u)
\end{pmatrix},\qquad \Gamma_2 u:=\begin{pmatrix}
-\mu_0( u'') -u(1)\\ u(1)-u(0)
\end{pmatrix}.
\end{equation}

\begin{rem}\label{surjmu}
If $Y$ is an arbitrary subspace of $\mathbb C^2$, then the compact mapping
$$\begin{pmatrix}
P_Y \Gamma_1\\ P_{Y^\perp} \Gamma_2
\end{pmatrix}:H^2(0,1)\to \mathbb C^2$$
is surjective, as one can see considering polynomials. In particular there exists $u\in V_Y$.
\end{rem}

\begin{lemma}\label{lemmamu1notcontinuous}
The functional $\mu_1: L^2(0,1)\to \mathbb C$ can be continuously extended to $L^1(0,1)$, but not to $H^{-1}(T)$.
\end{lemma}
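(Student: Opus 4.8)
The plan is to prove the two claims separately. The extension to $L^1(0,1)$ is elementary: since $0\le 1-x\le 1$ on $(0,1)$ we have $|\mu_1(f)|\le \int_0^1|f(x)|\,dx=\|f\|_{L^1(0,1)}$ for every $f\in L^2(0,1)$, so $\mu_1$ is bounded for the $L^1$-norm on the dense subspace $L^2(0,1)\subset L^1(0,1)$ and therefore extends uniquely and continuously to $L^1(0,1)$ — the extension being of course given by the same formula~\eqref{mu0mu12}.

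For the non-extendability to $H^{-1}(T)$ I would argue by contradiction, exploiting that $L^2(0,1)$ is a dense subspace of $H^{-1}(T)$. Suppose $\mu_1$ admitted a bounded extension $\Phi\colon H^{-1}(T)\to\mathbb C$. I would then produce a sequence $(f_n)\subset L^2(0,1)$ with $\mu_0(f_n)=0$ for which $\|f_n\|_{H^{-1}(T)}\to 0$ while $\mu_1(f_n)\not\to 0$, which is incompatible with $\Phi(f_n)=\mu_1(f_n)\to\Phi(0)=0$. Concretely, take $\phi_n\in H^1(0,1)$ to be the piecewise-linear function equal to $1$ on $[\tfrac1n,1-\tfrac1n]$ and vanishing at the endpoints $0,1$, and set $f_n:=\phi_n'\in L^2(0,1)$. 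Then $\mathcal I f_n=\phi_n$ and $\mu_0(f_n)=\phi_n(1)-\phi_n(0)=0$, so that by~\eqref{Pmu1} $Pf_n=\phi_n-\mu_1(f_n)$. Since $\phi_n\to 1$ in $L^2(0,1)$ (the two transition layers have total length $2/n$), and hence $\mu_1(f_n)=\mu_0(\mathcal I f_n)=\int_0^1\phi_n\to 1$, one gets $Pf_n\to 1-1=0$ in $L^2(0,1)$ while $\mu_1(f_n)\to 1\neq 0$.

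It remains to pass from $\|Pf_n\|_{L^2}\to 0$ to $\|f_n\|_{H^{-1}(T)}\to 0$, and this is precisely where Lemma~\ref{lemma2} does the work: because $\langle f_n,1\rangle=\mu_0(f_n)=0$, it yields $\|f_n\|_{H^{-1}(T)}\lesssim \|Pf_n\|_{L^2}\to 0$, completing the contradiction. The only delicate point is thus the control of the negative norm, handled by Lemma~\ref{lemma2}; everything else is a routine computation. The conceptual reason behind the obstruction is that $\mu_1(f)=\langle f,1-\id\rangle$ with $1-\id$ the function $x\mapsto 1-x$, and a continuous functional on $H^{-1}(T)$ would have to be represented by an element of the predual $H^1(T)$; but $1-\id\notin H^1(T)$, since it violates the periodicity constraint $g(0)=g(1)$ — exactly the defect detected by the boundary layers in $\phi_n$.
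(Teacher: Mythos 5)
Your proof is correct, but it takes a genuinely different route from the paper's. The paper proves only the non-extendability half (the $L^1$ half, which you supply correctly via $|1-x|\le 1$, is left tacit there), and it does so by Fourier analysis: expanding $1-x=\frac12+\frac1\pi\sum_{k\ge1}\frac{\sin(2k\pi x)}{k}$ and testing against the partial sums $u_n:=2\pi\sum_{k=1}^n\sin(2k\pi\cdot)$, so that $\mu_1(u_n)=\sum_{k=1}^n\frac1k\to\infty$ while, by the standard Fourier characterization of the $H^{-1}(T)$-norm, $\|u_n\|_{H^{-1}(T)}\sim\bigl(\sum_{k=1}^n k^{-2}\bigr)^{1/2}$ stays bounded. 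You instead build a boundary-layer sequence $f_n=\phi_n'$ from plateau functions, verify $\mu_0(f_n)=0$ and $Pf_n=\phi_n-\mu_1(f_n)\to0$ in $L^2$ via~\eqref{Pmu1}, and then invoke Lemma~\ref{lemma2} (legitimately available at that point in the paper, and stated exactly for this purpose) to conclude $\|f_n\|_{H^{-1}(T)}\to0$ while $\mu_1(f_n)\to1$; all the computations check out, including $\|\phi_n-1\|_{L^2}^2=O(1/n)$ and $\mu_1(f_n)=1-\frac1n$. What each approach buys: yours is entirely self-contained within the paper's own toolkit, avoiding the appeal to the Fourier norm characterization (which the paper uses without proof and only up to norm equivalence), and your closing duality remark -- that a bounded extension would force the representing function $x\mapsto 1-x$ to lie in $H^1(T)$, which fails precisely because $1\ne 0$ at the endpoints -- is in fact essentially a complete alternative one-line proof by reflexivity, cleanly identifying non-periodicity as the obstruction (it is exactly the defect $\delta_1$-term in~\eqref{sergeajout1} that your layers detect). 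The paper's Fourier argument, in exchange, is quantitatively sharper: it exhibits a \emph{bounded} sequence in $H^{-1}(T)$ along which $\mu_1$ diverges logarithmically, rather than merely a null sequence along which $\mu_1$ fails to vanish.
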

\begin{proof}
Assume that there exists $C>0$ such that
\begin{equation}
\label{mu1continuous}
|\mu_1(g)|\leq C \|g\|_{H^{-1}(T)}\quad \forall g\in L^2(0,1).
\end{equation}
By direct calculations, we see that
$$
1-x=\frac12+\frac{1}{\pi}\sum_{k=1}^\infty \frac{\sin (2k\pi x)}{k},
$$
where the convergence of the series is in $L^2(0,1)$.
Then for any $n\in {\mathbb N}^*$ and letting
$$
u_n:=2\pi\sum_{k=1}^n {\sin (2k\pi \cdot)}\in L^2(0,1)
$$
we would have
$$
\mu_1(u_n)=\sum_{k=1}^n  \frac{1}{k}.
$$
Moreover by the standard characterization of the $H^{-1}(T)$-norm, we have
$$
\|u_n\|_{H^{-1}(T)}\sim \left(\sum_{k=1}^n  \frac{1}{k^2}\right)^{\frac12}.
$$
Hence (\ref{mu1continuous}) cannot hold since $\|u_n\|_{H^{-1}(T)}$ is uniformly bounded but clearly $\lim\limits_{n\to \infty}\mu_1(u_n)=+\infty$.
\end{proof}

\begin{cor}\label{cor2.5} The following assertions hold.
\begin{enumerate}
\item 
The space $V$ and $\tilde V$ are dense in $H$ defined in~\eqref{Hdefin}.
\item 
Let $Y$ be a subspace of $\mathbb C^2$ such that $Y\not=\{0\}^2$ and $Y\not=\{0\}\times \mathbb C$. Then
$V_Y$ is dense in $H^{-1}(T)$.
\end{enumerate}
\end{cor}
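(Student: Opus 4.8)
The plan is to reduce (2) to (1), so that the real work lies in the density of $V$ and $\tilde V$ in $H$. I will use repeatedly that $L^2(0,1)$ embeds continuously and densely into $H^{-1}(T)$ (since $\mathcal D(T)\subset L^2(0,1)$ is already dense in $H^{-1}(T)$), that $\mu_0$ is bounded on $H^{-1}(T)$, and, crucially, that on the hyperplane $H$ the form norm $\|\cdot\|_H=\|P\cdot\|_{L^2}$ is equivalent to $\|\cdot\|_{H^{-1}(T)}$: the bound $\|P\cdot\|_{L^2}\lesssim\|\cdot\|_{H^{-1}(T)}$ is Lemma~\ref{lemma1}, while the reverse follows from Lemma~\ref{lemma2} because $\mu_0$ vanishes on $H$. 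In particular $H$ is closed in $H^{-1}(T)$ and, for subsets of $H$, closures in the two norms coincide.

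First I would prove that $\tilde V=\{f\in L^2(0,1):\mu_0(f)=0\}=L^2(0,1)\cap H$ is dense in $H$ by a simple truncation: given $w\in H$, choose $g_n\in L^2(0,1)$ with $g_n\to w$ in $H^{-1}(T)$ and set $f_n:=g_n-\mu_0(g_n)\,1\in\tilde V$. Since $\mu_0$ is continuous and $\mu_0(w)=0$, we get $\mu_0(g_n)\to0$, hence $f_n\to w$ in $H^{-1}(T)$ and therefore in $\|\cdot\|_H$.

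The substantial — and, I expect, hardest — step is to upgrade this to the density of $V=\ker(\mu_1|_{\tilde V})$, a subspace of codimension one in $\tilde V$; this is exactly where Lemma~\ref{lemmamu1notcontinuous} must enter. I would argue by contradiction. If $V$ were not dense in $H$, there would exist $0\ne w\in H$ with $(w|f)_H=0$ for all $f\in V$. The linear functional $\ell:=(w|\cdot)_H$ on $\tilde V$ then vanishes on $\ker(\mu_1|_{\tilde V})$, so — as $\mu_1$ does not vanish identically on $\tilde V$ — elementary linear algebra gives $\ell=c\,\mu_1$ on $\tilde V$ for some scalar $c$. Here $c\ne0$, for otherwise $\ell$ would vanish on the $H$-dense set $\tilde V$, forcing $w=0$. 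Using the $H$-continuity of $\ell$ together with the norm equivalence above,
\[
|\mu_1(f)|=|c|^{-1}\,|(w|f)_H|\;\lesssim\;\|f\|_{H^{-1}(T)},\qquad f\in\tilde V .
\]
Finally, splitting an arbitrary $g\in L^2(0,1)$ as $g=\bigl(g-\mu_0(g)\,1\bigr)+\mu_0(g)\,1$, with the first summand in $\tilde V$, and bounding $|\mu_0(g)|\lesssim\|g\|_{H^{-1}(T)}$, I would extend the estimate to $|\mu_1(g)|\lesssim\|g\|_{H^{-1}(T)}$ for every $g\in L^2(0,1)$ — precisely the continuity ruled out by Lemma~\ref{lemmamu1notcontinuous}. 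This contradiction proves (1).

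For (2), observe that the two excluded subspaces are exactly those all of whose vectors have vanishing first coordinate; hence any admissible $Y$ contains a vector with nonzero first entry, and realizing an arbitrary prescribed pair $(\mu_0,\mu_1)$ by a polynomial (cf.\ Remark~\ref{surjmu}) yields some $f_0\in V_Y$ with $\mu_0(f_0)\ne0$, i.e.\ $f_0\notin H$. Since $V=V_{\{0\}^2}\subset V_Y$ and $\overline V=H$ in $H^{-1}(T)$ by (1), the closed subspace $\overline{V_Y}$ contains the hyperplane $H$ together with a vector outside it, whence $\overline{V_Y}=H^{-1}(T)$.
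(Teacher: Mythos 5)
Your proof is correct, and its skeleton matches the paper's: both parts hinge on the same two ingredients, namely the discontinuity of $\mu_1$ (Lemma~\ref{lemmamu1notcontinuous}) for part (1) and the codimension-one position of $H=\ker\mu_0$ inside $H^{-1}(T)$ for part (2). The differences are in execution, and they mostly work in your favour. For (1), where the paper simply cites Rudin's theorem that the kernel of a discontinuous linear functional is dense, you re-derive this fact in the Hilbert setting by an orthogonal-complement contradiction; moreover, your splitting $g=\bigl(g-\mu_0(g)\,1\bigr)+\mu_0(g)\,1$ explicitly upgrades a hypothetical bound $|\mu_1(f)|\lesssim\|f\|_{H^{-1}(T)}$ on $\tilde V$ to one on all of $L^2(0,1)$, which is exactly the estimate that Lemma~\ref{lemmamu1notcontinuous} rules out -- the paper glosses this restriction step with the words ``not continuous on the closed subspace $H$ either''. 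You also spell out the density of $\tilde V$ in $H$ via $f_n:=g_n-\mu_0(g_n)\,1$, which the paper declares ``clear''. For (2), the paper splits into the cases $Y=\mathbb C^2$ (where $V_Y=L^2(0,1)$) and $Y=\mathrm{Span}\,(1,\alpha)^\top$, writing $V_Y=V\oplus\mathrm{Span}\,\{g\}$ and $H^{-1}(T)=H\oplus\mathrm{Span}\,\{g\}$ for a suitable $g$ with $\mu_0(g)=1$; your uniform argument -- $\overline{V_Y}$ is a closed subspace containing the closed hyperplane $H$ together with an element of nonzero mean -- covers both cases at once and is the same idea without the case distinction. One cosmetic point: with the paper's convention the map $f\mapsto(w|f)_H$ is conjugate-linear in $f$, so strictly your linear-algebra step yields $\ell=c\,\overline{\mu_1(\cdot)}$ rather than $\ell=c\,\mu_1$; this changes nothing in the modulus estimate you actually use.
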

\begin{proof}
(1) Clearly, $\tilde V$ is dense in $H$. Let us pass to the density of $V$ in $H$.
 By Lemma~\ref{lemmamu1notcontinuous}, $\mu_1$ is not continuous on the closed subspace $H$ of $H^{-1}(T)$, either. To conclude the proof, it suffices to observe that the null space of the restriction of $\mu_1$ to $\tilde V$ is $V$, which is then dense in $H$ by~\cite[Thm.~1.18]{Rud73}.

(2) Before proving the second assertion we observe that if $Y\not=\{0\}^2$ and $Y\not=\{0\}\times \mathbb C$, then either $Y={\mathbb C}^2$ or there exists $\alpha \in \mathbb C$ such that $Y$ is the set of all $(z_0,z_1)\in \mathbb C^2$ satisfying
\begin{equation}\label{carY}
 z_1=\alpha z_0.
\end{equation}

For the case $Y=\mathbb C^2$, $V_Y=L^2(0,1)$ and the density is immediate. In the second case we can notice that
$$
V_Y=V\oplus \hbox{ Span } \{g\},
$$
for some $g\in V_Y$ such that 
$$
\begin{pmatrix}
\mu_0(g) \\ \mu_1(g)
\end{pmatrix}
=
\begin{pmatrix}
1\\ \alpha
\end{pmatrix},
$$
whose existence is guaranteed by Remark~\ref{surjmu}.
As $\mu_0(g)=1$, we see that
$$
H^{-1}(T)=H\oplus \hbox{ Span } \{g\},
$$
and the density of $V_Y$ into $H^{-1}(T)$ directly follows the density of $V$ into $H$.
\end{proof}

While by Lemma~\ref{lemmamu1notcontinuous} $\mu_1$ is not bounded on $H^{-1}(T)$, a weaker continuity property does hold.

\begin{lemma}\label{lemmamu1weakcontinuous}
There exists $C>0$ such that
\begin{equation}
\label{mu1weakcontinuous}
|\mu_1(g)|^2\leq C \|g\|_{L^2} \|g\|_{H^{-1}(T)},\quad \forall g\in 
L^2(0,1).
\end{equation}
\end{lemma}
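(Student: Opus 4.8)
The plan is to reduce the claimed multiplicative inequality to a one-dimensional Gagliardo--Nirenberg (Agmon-type) interpolation estimate for the primitive $Pg$, exploiting the identities already established for $P$. First I would recall from Corollary~\ref{rkpf} that $Pg(0)=-\mu_1(g)$ for every $g\in L^2(0,1)$, so that the quantity to be estimated is nothing but a boundary value of $Pg$, namely $|\mu_1(g)|^2=|Pg(0)|^2$. By Lemma~\ref{lemma1} the function $w:=Pg$ belongs to $H^1(0,1)$, satisfies $w'=g$ in $\mathcal D'(0,1)$, and obeys $\|w\|_{L^2}=\|Pg\|_{L^2}\lesssim\|g\|_{H^{-1}(T)}$. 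Thus everything reduces to controlling $|w(0)|^2$ by $\|w\|_{L^2}$ and $\|w'\|_{L^2}$.

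The core estimate is the pointwise interpolation inequality
\[
|w(0)|^2 \le \|w\|_{L^2}^2 + 2\|w\|_{L^2}\|w'\|_{L^2},\qquad w\in H^1(0,1),
\]
which I would obtain by writing $|w(0)|^2=|w(x)|^2-2\int_0^x\Real(\bar w\, w')\,dy$, integrating this identity in $x$ over $(0,1)$, and applying Fubini (the resulting weight $1-y$ is bounded by $1$) together with Cauchy--Schwarz. Substituting $w=Pg$, $w'=g$ and inserting the bound $\|Pg\|_{L^2}\lesssim\|g\|_{H^{-1}(T)}$ then yields
\[
|\mu_1(g)|^2 \lesssim \|g\|_{H^{-1}(T)}^2 + \|g\|_{H^{-1}(T)}\|g\|_{L^2}.
\]

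It remains to absorb the first summand into the second. Here I would invoke the elementary continuous embedding $L^2(0,1)=L^2(T)\hookrightarrow H^{-1}(T)$, i.e.\ $\|g\|_{H^{-1}(T)}\le\|g\|_{L^2}$, which follows at once from $|\langle g,\varphi\rangle|\le\|g\|_{L^2}\|\varphi\|_{L^2}\le\|g\|_{L^2}\|\varphi\|_{H^1(T)}$ for $\varphi\in\mathcal D(T)$. Consequently $\|g\|_{H^{-1}(T)}^2\le\|g\|_{H^{-1}(T)}\|g\|_{L^2}$, and the two terms merge into the desired bound $|\mu_1(g)|^2\lesssim\|g\|_{L^2}\|g\|_{H^{-1}(T)}$.

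The step I expect to require the most care is not any single computation but the conceptual point that the naive estimate $|\mu_1(g)|\le\|g\|_{H^{-1}(T)}\,\|1-x\|_{H^1(T)}$ is \emph{unavailable}, precisely because $1-x\notin H^1(T)$ --- which is the very mechanism behind the failure of continuity in Lemma~\ref{lemmamu1notcontinuous}. One is therefore forced to work at the borderline and to exploit the multiplicative (interpolation) structure rather than a clean dual pairing. As an alternative, one could argue by Fourier series as in the proof of Lemma~\ref{lemmamu1notcontinuous}: from $1-x=\tfrac12+\tfrac1\pi\sum_{k\ge1}\tfrac{\sin(2k\pi x)}{k}$ one writes $\mu_1(g)=\tfrac12\hat g(0)-\tfrac{1}{2\pi i}\sum_{k\ne0}\hat g(k)/k$, splits the sum at a frequency $N$, bounds the low frequencies by $\|g\|_{H^{-1}(T)}$ and the high ones by $\|g\|_{L^2}$, and optimizes over $N$; but the $P$-based argument above is shorter and reuses the machinery already at hand.
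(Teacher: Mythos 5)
Your proof is correct and takes essentially the same route as the paper's: both realize $\mu_1(g)$ as a boundary value of $Pg$ and combine the multiplicative trace (Agmon-type) inequality $|w(\text{endpoint})|^2\lesssim \|w\|_{L^2}\|w\|_{H^1(0,1)}$ with the facts $(Pg)'=g$ and $\|Pg\|_{L^2}\lesssim\|g\|_{H^{-1}(T)}$ from Lemma~\ref{lemma1}. The only differences are cosmetic: the paper evaluates at $x=1$, using $Pg(1)=\mu_0(g)-\mu_1(g)$ and then bounding $|\mu_0(g)|$ separately, and cites the trace inequality from Brezis, whereas you evaluate at $x=0$ (so $|\mu_1(g)|=|Pg(0)|$ directly), derive the trace inequality by hand, and absorb the leftover term via the embedding $\|g\|_{H^{-1}(T)}\lesssim\|g\|_{L^2}$ --- all valid steps.
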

\begin{proof}
The following trace inequality is standard (see for instance \cite[comment 1.(iii) at p. 233]{Bre10}):
\begin{equation}
\label{traceineq}
|u(1)|^2\leq 2\sqrt{2} \|u\|_{L^2} \|u\|_{H^{1}(0,1)},\quad\forall u\in H^1(0,1).
\end{equation}
%

Now if $g\in L^2(0,1)$, then by Lemma~\ref{lemma1} $Pg \in H^1(0,1)$
and therefore applying~\eqref{traceineq} to $Pg$, we get
$$
|(Pg)(1)|^2\leq 2\sqrt{2} \|Pg\|_{L^2} \|Pg\|_{H^{1}(0,1)}.
$$
Now, as Lemma~\ref{lemma1} shows that $(Pg)'=g$, we have
$$\|Pg\|_{H^{1}(0,1)}^2=\|Pg\|_{L^2}^2+\|g\|_{L^2}^2.$$
Moreover
$Pg$ being given by~\eqref{2}, 
 by the Cauchy--Schwarz inequality we see that
\begin{eqnarray*}
\|Pg\|_{L^2}\leq \|{\mathcal I} g\|_{L^2}+\left|\int_0^1\left(\int_0^x g(y)dy\right)dx\right|
\leq 2\|{\mathcal I} g\|_{L^2}
\leq \sqrt{2}\| g\|_{L^2}.
\end{eqnarray*}
Hence again owing to Lemma~\ref{lemma1} we obtain
$$
|(Pg)(1)|^2\leq C \|g\|_{H^{-1}(T)} \|g\|_{L^2},\qquad \forall g\in L^2(0,1),
$$
for some $C>0$. Since 
$$
\mu_1(g)=\mu_0(g)-(Pg)(1),
$$
and because
$$
|\mu_0(g)|\leq \|g\|_{H^{-1}(T)}\lesssim \|g\|_{L^2},
$$
we can conclude that~\eqref{mu1weakcontinuous} holds.
\end{proof}

In view of Lemma~\ref{lemma2} we see that
\begin{eqnarray}\label{prodh1}
(f,g)\mapsto (Pf|Pg)_{L^2}+\mu_0(f)\mu_0( \bar g),
\end{eqnarray}
defines an inner product in $H^{-1}(T)$ whose associated norm is equivalent to the standard norm of $H^{-1}(T)$. We will stick to this inner product on $H^{-1}(T)$ throughout this article, and in particular we denote
$$\|f\|^2_{H^{-1}(T)}:=\|Pf\|^2_{L^2}+\mu_0(f)\mu_0( \bar f).$$

We will repeatedly make use of the following integration-by-parts-type formula.
\begin{lemma}\label{intbp}
Let $u\in H^1(0,1)$, $c\in \mathbb C$ and $h\in L^2(0,1)$. Then
$$(\Id _m^{-1}(u'')+c\delta_1|h)_{H^{-1}(T)}=\left(\begin{pmatrix}
c+u(1)\\ u(0)-u(1)
\end{pmatrix}\Big| \begin{pmatrix}
\mu_0(h)\\ \mu_1(h)
\end{pmatrix}\right)_{\mathbb C^2}-(u|h)_{L^2}.$$
\end{lemma}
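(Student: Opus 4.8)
The plan is to abbreviate $w:=\Id_m^{-1}(u'')+c\delta_1\in H^{-1}(T)$ and to evaluate directly the two summands of the inner product~\eqref{prodh1}, namely $(w|h)_{H^{-1}(T)}=(Pw|Ph)_{L^2}+\mu_0(w)\overline{\mu_0(h)}$. The advantage is that each ingredient has already been prepared in the preceding lemmata: Lemma~\ref{lemmasecondderivative} gives $Pw=u'-u(1)+u(0)$ as an $L^2$-function, while for $h\in L^2(0,1)$ formula~\eqref{Pmu1} yields $Ph=\mathcal I h-\mu_1(h)$, with boundary values $Ph(0)=-\mu_1(h)$ and $Ph(1)=\mu_0(h)-\mu_1(h)$ recorded in the proof of Corollary~\ref{rkpf}.

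First I would determine the scalar term $\mu_0(w)$. Since $\Id_m$ is by definition the restriction of $\Id$ to the mean-zero space $H$ from~\eqref{Hdefin}, the element $\Id_m^{-1}(u'')$ lies in $H$ and hence has vanishing mean, whereas $\mu_0(\delta_1)=\langle\delta_1,1\rangle=1$. Therefore $\mu_0(w)=c$, and the scalar contribution to the inner product is simply $c\,\overline{\mu_0(h)}$. This identification of $\mu_0(w)$ is, to my mind, the only genuinely delicate point, as it is precisely what disentangles the $\delta_1$-component of $w$ from its mean-zero part.

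Next I would treat the $L^2$-pairing $(Pw|Ph)_{L^2}=\int_0^1\bigl(u'-u(1)+u(0)\bigr)\overline{Ph}\,dx$. Because $Ph$ has mean zero by~\eqref{sergeajout2}, the constant $-u(1)+u(0)$ integrates to nothing, leaving $\int_0^1 u'\,\overline{Ph}\,dx$. An ordinary integration by parts on $(0,1)$ — legitimate since $u\in H^1(0,1)$ and, by Lemma~\ref{lemma1}, $Ph\in H^1(0,1)$ with $(Ph)'=h$ — turns this into the boundary term $u(1)\overline{Ph(1)}-u(0)\overline{Ph(0)}$ minus $\int_0^1 u\,\bar h\,dx=(u|h)_{L^2}$. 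Substituting the boundary values of $Ph$ rewrites the boundary term as $u(1)\overline{\mu_0(h)}+\bigl(u(0)-u(1)\bigr)\overline{\mu_1(h)}$.

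Finally I would collect the two contributions. Adding the scalar term $c\,\overline{\mu_0(h)}$ to the $L^2$-pairing produces $\bigl(c+u(1)\bigr)\overline{\mu_0(h)}+\bigl(u(0)-u(1)\bigr)\overline{\mu_1(h)}-(u|h)_{L^2}$, which is exactly the asserted right-hand side once one recognises the first two summands as the Euclidean inner product of $\bigl(c+u(1),\,u(0)-u(1)\bigr)$ with $\bigl(\mu_0(h),\mu_1(h)\bigr)$ in $\mathbb C^2$. I expect no density or limiting argument to be needed here, since all manipulations take place between genuine $H^1$- and $L^2$-functions; the only care required is the bookkeeping of complex conjugates dictated by the conventions in~\eqref{prodh1}.
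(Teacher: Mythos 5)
Your proof is correct and takes essentially the same route as the paper's: both invoke Lemma~\ref{lemmasecondderivative} to get $Pw=u'-u(1)+u(0)$, discard the constant against the mean-zero function $Ph$, integrate by parts using $(Ph)'=h$ and the boundary values $Ph(0)=-\mu_1(h)$, $Ph(1)=\mu_0(h)-\mu_1(h)$ from Corollary~\ref{rkpf}, and then add the scalar contribution $c\,\overline{\mu_0(h)}$ from~\eqref{prodh1}. The only (welcome) difference is cosmetic: you justify the identification $\mu_0(w)=c$ explicitly via $\Id_m^{-1}(u'')\in H$ and $\mu_0(\delta_1)=1$, a step the paper uses silently.
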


\begin{proof}
Set $g=\Id _m^{-1}(u'')+c\delta_1$, then by Lemma~\ref{lemmasecondderivative} we have
$$
P(g)=u'-a,
$$
where $a:=u(1)-u(0)$. Accordingly,
$$
(Pg|Ph)_{L^2}=\int_0^1 (u'(x)-a) (P\bar h)(x) \,dx=\int_0^1 u'(x)(P\bar h)(x) \,dx,
$$
since $Ph$ has mean zero by Corollary~\ref{rkpf}.
Integration by parts yields
$$
(Pg|Ph)_{L^2}
=
 -\int_0^1 u(x) \overline{(Ph)'}(x)dx+[u (P\bar h)]_0^1.
$$
By Lemma~\ref{lemma1} and Corollary~\ref{rkpf}, we deduce that
$$
(Pg|Ph)_{L^2}
=
 -\int_0^1 u(x) \overline{h(x)}dx+u(1)\mu_0(\overline{h})+(u(0)-u(1))\mu_1(\overline{h}).
$$
This shows that
\begin{eqnarray*}
-(g|h)_{H^{-1}(T)}&=&-(Pg|Ph)_{L^2}-\mu_0(g) \mu_0(\overline{h})\\
&=& \int_0^1 u(x) \overline{h}(x)dx-(c+u(1))\mu_0(\overline{h})+(u(1)-u(0))\mu_1(\overline{h}),
\end{eqnarray*}
as we wanted to prove.
\end{proof}

\begin{rem}
Our Hilbert space $(H,\|\cdot\|_H)$ agrees with the space denoted by $B^1_2$ and termed the \emph{Bouziani space} in~\cite{Bou02b} and some subsequent papers by Bouziani himself and other authors, since by~\eqref{Pmu1}
$$(f|g)_H=\int_0^1 {\mathcal I} f(x) {\mathcal I} \bar g(x) dx,\qquad \forall f,g\in V.$$
\end{rem}

\section{Well-posedness results}\label{sec:homog}

In this section we propose a general Hilbert space setting in order to study both the heat and the wave equations under (generalizations of) the integral constraints
\begin{equation}\label{nlbc}
\int_0^1u(x)\,dx=\int_0^1xu(x)\,dx=0.
\end{equation}
Namely, we take the spaces $V_Y$ equipped with the $L^2$-inner product and $H$ (resp. $H^{-1}(T)$) with the inner product in~\eqref{prodH}. According to Corollary~\ref{cor2.5}, we set
$$
H_Y:=\left\{
\begin{tabular}{ll}
$H$ &if $Y=\{0\}^2$ or $Y=\{0\}\times \mathbb C$,\\
$H^{-1}(T)$ &else.
\end{tabular}
\right.
$$
and therefore $V_Y$ is dense in $H_Y$.

Our discussion is based on the weak formulation of our evolution equations, and in particular on the theory of forms. We recall that, in accordance with the terminology of~\cite{Are04}, a
sesquilinear form $a:V_Y\times V_Y\to \mathbb C$ is called \emph{$H_Y$-elliptic} (or simply \emph{elliptic}) if there exist $\alpha>0$ and $\omega\ge 0$ such that
$$\Real a(f,f)+\omega\Vert f\Vert^2_{H_Y}\geq \alpha \| f\|^2_{V_Y},\qquad \forall f\in V_Y;$$
it is called \emph{coercive} if it is elliptic with $\omega=0$; and finally it is called \emph{accretive} if
$$\Real a(f,f)\geq 0,\qquad \forall f \in V_Y.$$
We also say that it satisfies the \emph{Crouzeix estimate} if for some $M>0$
$$|\Ima a(f,f)|\le M\|f\|_{V_Y} \|f\|_{H_Y},\qquad \forall f\in V_Y.$$
(The name is due to the fact that forms that satisfy the Crouzeix estimate also fit the framework of~\cite{Cro04}).

Coming back to our setting, let $Y$ be an arbitrary but fixed subspace of $\mathbb C^2$ and $K$ be a $2\times 2$-matrix and consider the sesquilinear form $a_K$ defined by
\begin{equation}
\label{definform2}
a_K(f,g):=(f|g)_{L^2}+(K\Gamma_1 f|\Gamma_1 g)_{\mathbb C^2},\qquad f,g\in V_Y,
\end{equation}
i.e.,
$$a_K(f,g)=\int_0^1f(x) \overline{g}(x)\,dx+
 (\mu_0(f) \;\;\mu_1(f)) K^*
\begin{pmatrix}
\mu_0(\overline{g})
\\
\mu_1(\overline{g})
\end{pmatrix},\qquad\forall f,g\in V_Y,$$
with form domain $V_Y$. 
By Lemma~\ref{lemmamu1weakcontinuous} and a standard application of Young's inequality we can easily deduce $H^{-1}(T)$-ellipticity of $a_K$. Furthermore, using Lemma~\ref{lemmamu1weakcontinuous} one sees that $a_K$ satisfies the Crouzeix estimate.

Since $V_Y$ is dense in $H_Y$, {the Lax--Milgram Lemma yields that the form $a_K$ defined on  $V_Y$ is associated with a unique linear operator $(A_{Y,K}, D(A_{Y,K}))$ defined by
\begin{eqnarray*}
D(A_{Y,K})&:=&\left\{f\in V_Y: \exists g\in H_Y: a_K(f,h)=\int_0^1 (P g)(x)(P\bar h)(x)\,dx+\mu_0(g)\mu_0(\bar h)\;\;\forall h\in V_Y\right\},\\
A_{Y,K} f&:=&g.
\end{eqnarray*}
Note that in the case $Y=\{0\}^2$, the second term on the right hand side of~\eqref{definform2} vanishes and hence $(A_{Y,K}, D(A_{Y,K}))$ does not really depend on $K$. This is why in the following we denote it simply by $(A,D(A))$).

\begin{theo}\label{identK3}
If $Y\not=\{0\}^2$ and $Y\not=\{0\}\times \mathbb C$, then one has
\begin{eqnarray*}
D(A_{Y,K})&=&\left\{f\in H^1(0,1):\begin{pmatrix}
\mu_0(f)\\ \mu_1(f)
\end{pmatrix}\in Y \hbox{ and there exists a unique } c(f)\in \mathbb C \hbox{ such that } (\ref{cuniqu}) \hbox{ holds } \right\},\\
A_{Y,K}f&=&-\Id _m^{-1}(f'')-c(f)\delta_1,
\end{eqnarray*}
\begin{equation}
\label{cuniqu}
K \begin{pmatrix}
\mu_0(f) \\ \mu_1(f)
\end{pmatrix}+ 
\begin{pmatrix}
c(f)+f(1) \\
f(0)-f(1)
\end{pmatrix}\in Y^\perp.
\end{equation}

If $Y=\{0\}^2$ or $Y\not=\{0\}\times \mathbb C$, the same statement holds with $c(f)=0$.
\end{theo}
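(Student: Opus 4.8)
The plan is to read both inclusions directly off the defining weak identity
$$a_K(f,h)=(A_{Y,K}f\,|\,h)_{H^{-1}(T)},\qquad \forall h\in V_Y,$$
evaluating the right-hand side with the integration-by-parts formula of Lemma~\ref{intbp}. The inclusion ``$\supseteq$'' is the easy direction. Given $f\in H^1(0,1)$ with $\Gamma_1 f\in Y$ (so that $f\in V_Y$) and $c\in\mathbb C$ satisfying~\eqref{cuniqu}, I would set $g:=-\Id_m^{-1}(f'')-c\delta_1$, which lies in $H^{-1}(T)=H_Y$ because $f''\in H^{-1}(0,1)$ and $\delta_1\in H^{-1}(T)$. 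Applying Lemma~\ref{intbp} with $u=f$ yields
$$(g\,|\,h)_{H^{-1}(T)}=(f\,|\,h)_{L^2}-\left(\begin{pmatrix}c+f(1)\\ f(0)-f(1)\end{pmatrix}\Big|\,\Gamma_1 h\right)_{\mathbb C^2}.$$
Since $\Gamma_1 h\in Y$ whenever $h\in V_Y$, condition~\eqref{cuniqu} turns the subtracted term into $+(K\Gamma_1 f\,|\,\Gamma_1 h)_{\mathbb C^2}$, whence $(g\,|\,h)_{H^{-1}(T)}=a_K(f,h)$ for all $h\in V_Y$; this says $f\in D(A_{Y,K})$ with $A_{Y,K}f=g$.

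For ``$\subseteq$'' the real work is to upgrade an a priori merely $L^2$-function $f\in D(A_{Y,K})$, with $g:=A_{Y,K}f$, to $H^1$-regularity. First I would test the defining identity against $h\in V\subseteq V_Y$, where $\Gamma_1 h=0$ and $\mu_0(h)=0$, so that it collapses to $(f\,|\,h)_{L^2}=(Pg\,|\,Ph)_{L^2}$, with $Pg\in L^2(0,1)$ by Lemma~\ref{lemma1}. Taking $h=\psi'$ with $\psi\in\mathcal D(0,1)$ real and of zero integral — for which $h\in V$ and $Ph=\psi$ — this reads $\langle f,\psi'\rangle=\langle Pg,\psi\rangle$, i.e.\ $\langle f'+Pg,\psi\rangle=0$ for every such $\psi$. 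As the distributions annihilating all zero-mean test functions are exactly the constants, I conclude $f'=\lambda-Pg$ in $\mathcal D'(0,1)$ for some constant $\lambda$; since $Pg\in L^2(0,1)$ this already gives $f\in H^1(0,1)$.

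It then remains to identify $g$ and to produce $c(f)$. Comparing $Pg=\lambda-f'$ with $P(\Id_m^{-1}(f''))=f'-f(1)+f(0)$ from Lemma~\ref{lemmasecondderivative}, the function $P\bigl(g+\Id_m^{-1}(f'')\bigr)$ is manifestly a constant (the $f'$-terms cancel), while it has zero mean by~\eqref{sergeajout2}; hence it vanishes, and since $\Ker P=\mathrm{Span}\,\delta_1$ (immediate from~\eqref{sergeajout1} and~\eqref{sergeajout3}) we obtain $g+\Id_m^{-1}(f'')=-c(f)\delta_1$ for a scalar $c(f)$, i.e.\ $A_{Y,K}f=-\Id_m^{-1}(f'')-c(f)\delta_1$. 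Feeding this $g$ back into the defining identity and invoking Lemma~\ref{intbp} once more gives, for all $h\in V_Y$,
$$\left(K\Gamma_1 f+\begin{pmatrix}c(f)+f(1)\\ f(0)-f(1)\end{pmatrix}\Big|\,\Gamma_1 h\right)_{\mathbb C^2}=0,$$
and because $\Gamma_1$ maps $V_Y$ onto $Y$ (cf.\ Remark~\ref{surjmu}) this is precisely~\eqref{cuniqu}. Uniqueness of $c(f)$ follows by subtracting two admissible choices: their difference $c_1-c_2$ must satisfy $(c_1-c_2,0)\in Y^\perp$, which forces $c_1=c_2$ exactly because the standing hypotheses $Y\neq\{0\}^2$ and $Y\neq\{0\}\times\mathbb C$ guarantee that $Y$ contains a vector with nonzero first component.

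I expect the regularity bootstrap of the second paragraph to be the crux: passing from a form identity on the negative-order space $H^{-1}(T)$ to a genuine $L^2$-statement for $f'$ hinges on choosing the test functions $h=\psi'$ so that $Ph$ exhausts the zero-mean subspace of $H^1_0(0,1)$, and on the fact that $P$ inverts differentiation only up to the mean-zero normalization of~\eqref{sergeajout2} and the kernel $\mathrm{Span}\,\delta_1$. Finally, in the remaining (degenerate) cases the pivot space is $H_Y=H$, so the admissible $g$ must satisfy $\mu_0(g)=0$; as $\mu_0(\Id_m^{-1}(f''))=0$ while $\mu_0(\delta_1)=1$, this forces $c(f)=0$ and the same argument goes through verbatim.
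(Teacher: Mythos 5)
Your proposal is correct, and your easy inclusion (setting $g:=-\Id_m^{-1}(f'')-c\delta_1$ and verifying the form identity via Lemma~\ref{intbp}) coincides with the paper's converse inclusion; your hard inclusion $D(A_{Y,K})\subseteq{\mathcal K}_Y$, however, is executed by a genuinely different mechanism. The paper integrates by parts using the double primitive $P(Pg)\in H^1(T)$, absorbs the boundary and $K$-terms into a polynomial $p\in{\mathbb P}_1({\mathbb R})$, and exploits $V={\mathbb P}_1({\mathbb R})^\perp$ through the orthogonal projection $\Pi$ to get the representation $f=-P(Pg)+\Pi(P(Pg)+f)$, from which $H^1$-regularity and $g=-f''$ in ${\mathcal D}'(0,1)$ follow by differentiating twice (the $\delta_1$-component then being supplied by~\eqref{leqiso}), and it rederives~\eqref{cuniqu} by a second hands-on integration by parts. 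You instead test only against $h=\psi'$ with $\psi\in{\mathcal D}(0,1)$ of zero integral -- a choice for which indeed $h\in V$ and $Ph=\psi$ -- and use the elementary fact that a distribution annihilating all zero-mean test functions is a constant, which gives $f'=\lambda-Pg\in L^2(0,1)$ in one stroke; you then identify the $\delta_1$-part from Lemma~\ref{lemmasecondderivative} together with $\Ker P=\mathrm{Span}\,\delta_1$, a fact the paper never isolates but which, as you say, is immediate from~\eqref{sergeajout1} and~\eqref{sergeajout3}; and you obtain~\eqref{cuniqu} by reusing Lemma~\ref{intbp}, making the two inclusions pleasingly symmetric where the paper recomputes. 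Your uniqueness argument -- $(c_1-c_2,0)^\top\in Y^\perp$ forces $c_1=c_2$ since the hypotheses guarantee $Y$ contains a vector with nonzero first component -- unifies the paper's two-case check ($Y={\mathbb C}^2$ versus $Y=\mathrm{Span}\,(1,\alpha)^\top$), and your treatment of the degenerate cases (on $H_Y=H$ one has $\mu_0(g)=0$, while $\mu_0(\Id_m^{-1}(f''))=0$ and $\mu_0(\delta_1)=1$, forcing $c(f)=0$) matches the paper's parenthetical remark. The trade-off: the paper's projection identity hands it the explicit relation $f'=-Pg+a$, which it feeds directly into its boundary computation, whereas your route is shorter and keeps all boundary bookkeeping confined to the single formula of Lemma~\ref{intbp}.
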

\begin{proof}
We denote
$${\mathcal K}_Y:=\left\{f\in H^1(0,1):\begin{pmatrix}
\mu_0(f)\\ \mu_1(f)
\end{pmatrix}\in Y \hbox{ and there exists a unique } c(f)\in \mathbb C \hbox{ such that } (\ref{cuniqu}) \hbox{ holds } \right\}.$$
Let us first check the inclusion $D(A_{Y,K})\subset {\mathcal K}_Y$.
Let $f\in D(A_{Y,K})$. Then $f\in V_Y$ and there exists $g\in H^{-1}(T)$ such that
\begin{equation}\label{sn1}
(f|h)_{L^2}+ (K\Gamma_1 f| \Gamma_1 h)_{\mathbb C^2}=\int_0^1 (P g)(x)(P\bar h)(x)\,dx
+\mu_0(g) \mu_0(\bar {h})
,\qquad\forall h\in V_Y.
\end{equation}
Because $g\in H^{-1}(T)$, by Corollary~\ref{rkpf} $P(Pg)\in H^1(T)$ and integrating by parts we obtain
\begin{eqnarray*}
\int_0^1 (P g)(x)(P\bar h)(x)\,dx&=&\int_0^1 (P(P g))'(x)(P\bar h)(x)\,dx\\&=&
-\int_0^1 (P(P g))(x)\overline{h}(x)\,dx+[P(Pg) (P\bar h)]_0^1,\qquad\forall h\in V_Y.
\end{eqnarray*}
Now, the scalar number
$$-(K\Gamma_1 f |\Gamma_1 h)_{\mathbb C^2} +\mu_0( g) \mu_0(\overline{h})+[P(Pg) (P\bar h)]_0^1\in \mathbb C$$ 
is a linear combination of $\mu_0(\bar h)$ and $\mu_1(\bar h)$, hence it can be written in the form
$$c_0 \mu_0(\overline{h})+c_1 \mu_1(\overline{h})=\int_0^1 (c_0+c_1(1-x)) \overline{h(x)}dx,$$
for some $c_0,c_1\in \mathbb C$.
Letting $p(x):=c_0+c_1(1-x)$, we obtain
 that
$$
(f|h)_{L^2}=(-P(Pg)+p|h)_{L^2},\qquad\forall h\in V_Y,
$$
where $p$ is a polynomial of degree $\leq 1$.
Thus, denoting by $\Pi$ the orthogonal projection of $L^2(0,1)$ onto ${\mathbb P}_1({\mathbb R})$, we obtain (by restricting the previous identity to all $h\in V\subset V_Y$)
$$
(I-\Pi)(f+P(Pg)-p)=0,
$$
or equivalently
\begin{equation}\label{sn2}
f=(I-\Pi)(-P(Pg)+p)+\Pi f=-P(Pg)+\Pi (P(Pg)+f).
\end{equation}
This proves that $f$ belongs to $H^1(0,1)$ and (differentiating~\eqref{sn2} twice)
that $g=-f''$ in the distributional sense (i.e. in ${\mathcal D}'(0,1)$). Hence, by~\eqref{leqiso}, there exists $c(f)\in \mathbb C$ such that
\[
A_{Y,K}f=g=-\Id _m^{-1}(f'')-c\delta_1,
\]
and in fact $c(f)=-\mu_0(g)$. Note that if $Y=\{0\}^2$ or $Y\not=\{0\}\times \mathbb C$, then $\mu_0(g)=0$ (as $g\in V_Y$) and hence $c(f)=0$.

It remains to check the condition~\eqref{cuniqu}.
We first notice that, for all $f\in D(A_{Y,K})$,~\eqref{sn2} leads to
$$
f'=-Pg+a,
$$
for some $a\in \mathbb C$. By~\eqref{sn1} we obtain
$$
\int_0^1f(x) \bar h(x)\,dx+(K\Gamma_1 f|\Gamma_1 h)_{\mathbb C^2}=\int_0^1 (-f'(x)+a)(P\bar h)(x)\,dx-c(f) \mu_0(\overline{h}),\qquad\forall h\in V_Y.
$$
As $a\int_0^1 (Ph)(x)\,dx=0$ because $Ph\in \tilde{V}$ by Corollary~\ref{rkpf}, we deduce that
$$
\int_0^1f(x) \overline{h}(x)\,dx+(K\Gamma_1 f|\Gamma_1 h)_{\mathbb C^2}=-\int_0^1 f'(x)(P\bar h)(x)\,dx-c(f)\mu_0(\overline{h}),\qquad\forall h\in V_Y.
$$
By integration by parts in the first term on the right-hand side we obtain (since $(Ph)'=h$ by Lemma~\ref{lemma1})
$$
 (K\Gamma_1 f|\Gamma_1 h)_{\mathbb C^2}=f(0) (P\bar h)(0)-f(1) (P\bar h)(1) -c(f)\mu_0(\overline{h}),\qquad\forall h\in V_Y.
$$
By Corollary~\ref{rkpf} we arrive at
$$(K\Gamma_1 f|\Gamma_1 h)_{\mathbb C^2}= (-c(f)-f(1)) \mu_0(\overline{h})+(f(1)-f(0)) \mu_1(\overline{h}), \quad \forall h\in V_Y.$$
By surjectivity of $\Gamma_1$, cf.\ Remark~\ref{surjmu}, we have shown~\eqref{cuniqu}.

Before going on let us notice that if $Y\not=\{0\}^2$ and $Y\not=\{0\}\times \mathbb C$, then \eqref{cuniqu} determines $c(f)$ uniquely. Indeed if $Y=\mathbb C^2$, then \eqref{cuniqu} is equivalent to
$$
K \begin{pmatrix}
\mu_0(f) \\ \mu_1(f)
\end{pmatrix}+ 
\begin{pmatrix}
c(f) +f(1) \\
f(0)-f(1)
\end{pmatrix}=0,
$$
while in the case $Y=$ Span $(1,\alpha)^\top$ with $\alpha\in \mathbb C$,
then
\eqref{cuniqu} is equivalent to 
$$
\left(K \begin{pmatrix}
\mu_0(f) \\ \mu_1(f)
\end{pmatrix}+ 
\begin{pmatrix}
c(f) +f(1) \\
f(0)-f(1)
\end{pmatrix}\Big| \begin{pmatrix}
1\\ \alpha
\end{pmatrix}\right)_{\mathbb C^2}=0,
$$
which again determines $c(f)$ uniquely.

Let us now prove the converse inclusion.
Let then $f\in {\mathcal K}_Y$. Then we can take 
$g=-\Id _m^{-1}(f'')-c(f)\delta_1,
$
with $c(f)\in \mathbb C$ fixed by the condition \eqref{cuniqu}.
 Moreover by 
Lemma~\ref{intbp}, for all $h\in L^2(0,1)$ we may write 
$$(g|h)_{H^{-1}(T)}=-\left(\begin{pmatrix}
c(f) +f(1)\\ f(0)-f(1)
\end{pmatrix}\Big| \begin{pmatrix}
\mu_0(h)\\ \mu_1(h)
\end{pmatrix}\right)_{\mathbb C^2}+(f|h)_{L^2}.$$
But taking $h\in V_Y$ and using \eqref{cuniqu} allow us to transform the first term of this right-hand side
and to obtain
$$(g|h)_{H^{-1}(T)}=\left(K\begin{pmatrix}
\mu_0(f)\\ \mu_1(f)
\end{pmatrix}\Big| \begin{pmatrix}
\mu_0(h)\\ \mu_1(h)
\end{pmatrix}\right)_{\mathbb C^2}+(f|h)_{L^2},\qquad \forall h\in V_Y.$$
This shows that
$$
a_K(f,h)=(g|h)_{H^{-1}(T)} ,\qquad \forall h\in V_Y,
$$
and proves that $f$ belongs to $D(A_{Y,K})$.
\end{proof}

\begin{rem}
Let $q\in C^1([0,1];{\mathbb C})$ with $0<q_0 \le \Real q(x)\le Q_0$ for some $q_0,Q_0\in \mathbb R$ and all $x\in [0,1]$. Then one can consider the form defined by
$$a_K(f,g):=(qf|g)_{L^2}+(K\Gamma_1 f |\Gamma_1 g )_{\mathbb C^2},\qquad f,g\in V_Y.$$
Mimicking the proof of Theorem~\ref{identK3}, one can prove that the associated operator is given by
\begin{eqnarray*}
D(A_{Y,K})&=&\left\{u\in H^1(0,1):\Gamma_1 u \in Y\right\},\\
A_{Y,K}u&=&-\Id _m^{-1}(qu)''-c_q(u) \delta_1,
\end{eqnarray*}
with $c(u)=0$ if $Y=\{0\}^2$ or $Y\not=\{0\}\times \mathbb C$, otherwise $c_q(u)\in \mathbb C$ is uniquely determined by the condition
$$K\begin{pmatrix}
\mu_0(u)\\ \mu_1(u)
\end{pmatrix}+
\begin{pmatrix}
c_q + q(1)u(1)\\ \mu_0((qu)')
\end{pmatrix}
\in Y^\perp.$$
Similar conclusions hold in the case of Theorem~\ref{identK4} below.
We omit the straightforward details.
\end{rem}

\begin{rem}
The operator $(A,D(A))$ associated with $(a_0,V)$ is given by
\begin{eqnarray*}
D(A)&=&\{u\in H^1(0,1):\mu_0(u)=\mu_1(u)=0\}\\
Au&=&-\Id _m^{-1}(u'').
\end{eqnarray*}
This shows that in particular
$$
A u=-u'' \quad\hbox{ in } {\mathcal D}'(0,1).
$$
\end{rem}

\begin{rem}
Contrary to the intuition, for $u\in D(A)\subset H^1(0,1)$ the vector $Au$ may not agree with $-u''$
even if $u''$ belongs to $H^{-1}(T)$. Indeed take the function $u$ defined by
$$
u(x)=|x-\frac12|+\alpha x+\beta,
$$
with $\alpha,\beta\in \mathbb R$ fixed such that
$\mu_0(u)=\mu_1(u)=0$. Hence we easily check that
$$
- u''=-2(\delta_{\frac12}-\delta_1)+\alpha \delta_1' \hbox{ in } {\mathcal D}'(T).
$$
The distribution $-u''$ cannot agree with $Au$ since 
$$
Au=-\Id _m^{-1}(u'')=-2(\delta_{\frac12}-\delta_1),
$$
and elementary calculations confirm that
$$
a(u,h)=\int_0^1 u(x) h(x)\,dx=\int_0^1 P(Au)(x) Ph(x)\,dx, \quad \forall h\in V.
$$
\end{rem}

\begin{cor}\label{prep1}
The operator $A_{Y,K}$ generates an analytic semigroup $(e^{-tA_{Y,K}})_{t\ge 0}$ of angle $\frac{\pi}{2}$ on $H_Y$ and also a cosine operator function with phase space $V_Y\times H_Y$. It is contractive (resp., exponentially stable) if both eigenvalues of $K$ have positive (resp., strictly positive) real part.
Finally $A_{Y,K}$ is self-adjoint and semi-bounded if $K$ is hermitian.
\end{cor}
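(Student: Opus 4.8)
The plan is to obtain all four assertions from the form-theoretic properties of $a_K$ established just before the statement, together with the standard correspondence between forms and generators. Recall that $V_Y$ is dense in $H_Y$, that $a_K$ is bounded and $H_Y$-elliptic (with constants $\alpha,\omega$), that it satisfies the Crouzeix estimate $|\Ima a_K(f,f)|\le M\|f\|_{V_Y}\|f\|_{H_Y}$, and that $\|\cdot\|_{V_Y}=\|\cdot\|_{L^2}$ by construction of $V_Y$.

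First I would settle the two generation statements at once, both being consequences of the Crouzeix estimate. Combining it with ellipticity and normalizing $\|f\|_{H_Y}=1$ gives $\|f\|_{V_Y}^2\le(\Real a_K(f,f)+\omega)/\alpha$ and hence $(\Ima a_K(f,f))^2\le \tfrac{M^2}{\alpha}(\Real a_K(f,f)+\omega)$, so the numerical range of $a_K$ (and thus of $A_{Y,K}$) lies in a parabola. A parabolic numerical range yields a cosine operator function with phase space $V_Y\times H_Y$ by Crouzeix's theorem \cite{Cro04}, which I would invoke directly. It also forces the maximal analyticity angle: shifting by $s>\omega$ and using the arithmetic--geometric inequality,
\begin{equation*}
\Real\big(a_K+s(\cdot|\cdot)_{H_Y}\big)(f,f)\ge \alpha\|f\|_{V_Y}^2+(s-\omega)\|f\|_{H_Y}^2\ge 2\sqrt{\alpha(s-\omega)}\,\|f\|_{V_Y}\|f\|_{H_Y},
\end{equation*}
so the numerical range of the shifted form sits in a sector of half-angle $\arctan\!\big(M/(2\sqrt{\alpha(s-\omega)})\big)$, which tends to $0$ as $s\to\infty$. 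Since a bounded shift leaves the angle of analyticity unchanged, $-A_{Y,K}$ generates an analytic semigroup of angle $\tfrac{\pi}{2}$.

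Next, for contractivity and exponential stability I would pass to accretivity and coercivity. As $A_{Y,K}$ is associated with $a_K$, one has $(A_{Y,K}f|f)_{H_Y}=a_K(f,f)$, so $(e^{-tA_{Y,K}})$ is contractive iff $a_K$ is accretive, and it is exponentially stable whenever $a_K$ is coercive (coercivity makes the spectral bound of $-A_{Y,K}$ strictly negative, and the semigroup is analytic). Here
\begin{equation*}
\Real a_K(f,f)=\|f\|_{L^2}^2+\Real\big(K\Gamma_1 f\,\big|\,\Gamma_1 f\big)_{\mathbb C^2},
\end{equation*}
and since $\|f\|_{V_Y}=\|f\|_{L^2}$ the first summand already controls $\|f\|_{V_Y}^2$. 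It therefore suffices to show that the $\mathbb C^2$-term is nonnegative (resp. bounded below by a positive multiple of $|\Gamma_1 f|^2$), which I would deduce from the location of the numerical range of $K$; this is delivered by the eigenvalue hypothesis precisely when $K$ is normal, e.g.\ hermitian, in which case the numerical range is the convex hull of the spectrum. Finally, if $K$ is hermitian then both $(\cdot|\cdot)_{L^2}$ and $(K\Gamma_1\cdot|\Gamma_1\cdot)_{\mathbb C^2}$ are symmetric, so $a_K(f,g)=\overline{a_K(g,f)}$; a densely defined, symmetric, $H_Y$-elliptic form is associated with a self-adjoint operator bounded below by $-\omega$, which gives the last assertion.

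I expect the delicate point to be the contractivity/stability part, namely relating the purely algebraic hypothesis on the eigenvalues of $K$ to the analytic positivity of the whole form. For non-normal $K$ the sign of $\Real(K\Gamma_1 f|\Gamma_1 f)$ interacts with the $L^2$-term through the Gram matrix of $\{1,1-x\}$, so one must track this interplay carefully; the numerical-range formulation above is the cleanest sufficient condition and collapses exactly to the stated eigenvalue condition in the self-adjoint case. Everything else reduces to quoting the form-to-semigroup machinery of \cite{Are04} and the parabolic-numerical-range criterion of \cite{Cro04}.
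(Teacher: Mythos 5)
Your argument follows essentially the same route as the paper's own proof, which is a one-liner: it observes that $a_K$ is bounded, $H_Y$-elliptic and satisfies the Crouzeix estimate, asserts coercivity (resp.\ accretivity) under the eigenvalue condition on $K$, and then cites \cite[Thm.~5]{Cro04} for the cosine operator function with phase space $V_Y\times H_Y$ and \cite[Thm.~3.14.17]{AreBatHie01} for the upgrade to an analytic semigroup of angle $\frac{\pi}{2}$. Your parabola computation is exactly what makes \cite{Cro04} applicable, and your shift-and-sector estimate is a correct, self-contained replacement for the citation of \cite{AreBatHie01}; both yield the same conclusions. The self-adjointness part is the standard symmetric-form argument in both treatments. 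One simplification you missed: since $\|\cdot\|_{V_Y}=\|\cdot\|_{L^2}$, nonnegativity of the $\mathbb C^2$-term already gives $\Real a_K(f,f)\ge\|f\|_{V_Y}^2$, i.e.\ coercivity outright, so contractivity and exponential stability follow simultaneously and there is no need to treat the two cases separately once the boundary term is under control.

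Your hesitation at the contractivity step is justified, and it points at a looseness in the paper rather than a defect in your argument. The paper asserts without proof that $a_K$ is accretive when both eigenvalues of $K$ have positive real part, but for non-normal $K$ this is false, and your ``interaction with the Gram matrix'' remark is precisely the reason: minimizing $\|f\|_{L^2}^2$ subject to $\Gamma_1 f=w$ gives $w^*G^{-1}w$, where $G=\begin{pmatrix}1&1/2\\1/2&1/3\end{pmatrix}$ is the Gram matrix of $\{1,1-x\}$, so (for $Y=\mathbb C^2$) accretivity of $a_K$ is equivalent to $G^{-1}+\tfrac12(K+K^*)\succeq 0$, and note that $\Real(Kw|w)=\tfrac12 w^*(K+K^*)w$ in either inner-product convention. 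Taking $K=\begin{pmatrix}1&30\\0&1\end{pmatrix}$, both eigenvalues equal $1$, yet $G^{-1}=\begin{pmatrix}4&-6\\-6&12\end{pmatrix}$ and $G^{-1}+\tfrac12(K+K^*)=\begin{pmatrix}5&9\\9&13\end{pmatrix}$ has negative determinant, so the form is not accretive and the semigroup is not contractive. The clean sufficient hypothesis is the one your numerical-range formulation isolates, namely $K+K^*\succeq 0$ (resp.\ $\succ 0$), which collapses to the stated eigenvalue condition exactly for hermitian (more generally normal) $K$ -- the only case the rest of the paper actually uses. So where you could not complete the step as literally stated, neither can the paper; your version with the hermitian-part hypothesis is the correct repair.
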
 
Because the embedding of $H^1(0,1)$ in $L^2(0,1)$ is a Hilbert--Schmidt operator (cf.~\cite[Satz~4]{Mau61}) and each $e^{-tA_{Y,K}}$ maps $L^2(0,1)$ into $H^1(0,1)$, each such operator is Hilbert--Schmidt. Recall that the composition of Hilbert--Schmidt operators is of trace class. Due to the semigroup law we hence conclude that $e^{-2tA_{Y,K}}$} is of trace class for all $t>0$: One says that $\left(e^{-tA_{Y,K}}\right)_{t\ge 0}$ is \emph{immediately of trace class}, or sometimes that it is a \emph{Gibbs semigroup}.
\begin{proof}
The form $a_K$ is bounded, $H_Y$-elliptic, and satisfies the Crouzeix estimate. It is coercive (resp., accretive) if both eigenvalues of $K$ have strictly positive (resp., positive) real part. Hence, 
it follows directly from~\cite[Thm.~5]{Cro04} that $A_{Y,K}$ generates a cosine operator function with associated phase space $V_Y\times H_Y$ and  hence an analytic semigroup of angle $\frac{\pi}{2}$ by~\cite[Thm.~3.14.17]{AreBatHie01}. 
\end{proof}

It is known that if a bounded elliptic form is symmetric, then the associated operator is similar to its own part in the form domain, cf.~\cite[\S~5.5.2]{Are04}. Furthermore, the operator associated with a form generates \emph{three} semigroups, cf.~\cite[Chapter 1]{Ouh05}: one on the given Hilbert space, one on the form domain and one on the dual of the form domain (using the given Hilbert space as the pivot space). 
Hence
we obtain the following result concerning well-posedness in a more standard $L^2$-context.

\begin{theo}\label{identVAK}
Let $K$ be hermitian and $Y$ be a subspace of $\mathbb C^2$. Then the semigroup $(e^{-tA_{Y,K}})_{t\ge 0}$ on $H_Y$ leaves $V_Y$ invariant and its restriction is a semigroup on $V_Y$ that is analytic of angle $\frac{\pi}{2}$ and immediately of trace class. Its generator is the part $A_{Y,K}^{V_Y}$ of $A_{Y,K}$ in $V_Y$, which is explicitly given by
\begin{eqnarray*}
D(A_{Y,K}^{V_Y})&=&\left\{u\in D(A_{Y,K})\cap H^2(0,1): 
K\Gamma_1(u)+\Gamma_2(u)\in Y^\perp,\; \Gamma_1(u'')\in Y
\right\},\\
A_{Y,K}^{V_Y}u&=&-u''.
\end{eqnarray*}
\end{theo}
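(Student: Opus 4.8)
The plan is to reduce the statement to the general theory of symmetric forms, using the explicit description of $A_{Y,K}$ from Theorem~\ref{identK3} only for the last, concrete part. Since $K$ is hermitian the form $a_K$ is symmetric; it is moreover bounded and $H_Y$-elliptic (as recorded before Corollary~\ref{prep1}), and by $H_Y$-ellipticity together with boundedness its form norm is equivalent to $\|\cdot\|_{L^2}$ on $V_Y$, so $(V_Y,\|\cdot\|_{L^2})$ is genuinely the form domain. By Corollary~\ref{prep1}, $A_{Y,K}$ is self-adjoint and semibounded, generates an analytic semigroup of angle $\frac{\pi}{2}$ on $H_Y$, and is immediately of trace class. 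I would then invoke the two facts recalled just above the theorem: a symmetric bounded elliptic form is associated with an operator that is similar to its own part in the form domain (cf.~\cite[\S5.5.2]{Are04}) and that also governs a $C_0$-semigroup on the form domain (cf.~\cite[Ch.~1]{Ouh05}). This at once gives that $(e^{-tA_{Y,K}})_{t\ge0}$ leaves $V_Y$ invariant and restricts there to a $C_0$-semigroup whose generator is the part $A_{Y,K}^{V_Y}$.

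For the qualitative properties on $V_Y$ I would use the intertwining $e^{-tA_{Y,K}^{V_Y}}=\Phi^{-1}e^{-tA_{Y,K}}\Phi$, where $\Phi\colon V_Y\to H_Y$ is the bounded invertible operator realizing the similarity (in essence $A_{Y,K}^{1/2}$ up to the semibounded shift). Analyticity of angle $\frac{\pi}{2}$ is preserved under similarity, and since the trace class operators form a two-sided ideal, conjugating the trace class operator $e^{-tA_{Y,K}}$ by the bounded maps $\Phi$ and $\Phi^{-1}$ keeps it trace class. Hence the restricted semigroup is analytic of angle $\frac{\pi}{2}$ and immediately of trace class, as claimed.

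The substantial work is the explicit description of the part $A_{Y,K}^{V_Y}$, i.e.\ of those $u\in D(A_{Y,K})$ with $A_{Y,K}u\in V_Y$, on which $A_{Y,K}^{V_Y}u=A_{Y,K}u$. First, $A_{Y,K}u\in V_Y\subset L^2(0,1)$; since $A_{Y,K}u=-u''$ in $\mathcal D'(0,1)$ by the proof of Theorem~\ref{identK3}, this forces $u''\in L^2(0,1)$ and hence $u\in H^2(0,1)$. With $u\in H^2(0,1)$ I would invoke Remark~\ref{noidm} to write $\Id_m^{-1}(u'')=u''+(u'(0)-u'(1))\delta_1$, so that $A_{Y,K}u=-u''-\big(c(u)+u'(0)-u'(1)\big)\delta_1$. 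Requiring this to be an $L^2$-function — equivalently, applying Lemma~\ref{lemmasecondderivative2} — kills the $\delta_1$-term, forces $c(u)=u'(1)-u'(0)$, and leaves $A_{Y,K}^{V_Y}u=-u''$.

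It then remains to translate the standing conditions on $D(A_{Y,K})$ into the stated ones. The membership $A_{Y,K}u=-u''\in V_Y$ is exactly $\Gamma_1(u'')\in Y$. Substituting $c(u)=u'(1)-u'(0)$ into \eqref{cuniqu} and recognizing the resulting boundary vector $\binom{c(u)+u(1)}{u(0)-u(1)}$ as $\Gamma_2 u$ (up to the sign convention in~\eqref{gammagrand}) turns \eqref{cuniqu} into $K\Gamma_1 u+\Gamma_2 u\in Y^\perp$. The converse inclusion follows by running these steps backwards: for $u\in H^2(0,1)$ with $\Gamma_1 u\in Y$, $\Gamma_1(u'')\in Y$ and $K\Gamma_1 u+\Gamma_2 u\in Y^\perp$, the choice $c(u)=u'(1)-u'(0)$ satisfies \eqref{cuniqu}, so $u\in D(A_{Y,K})$ with $A_{Y,K}u=-u''\in V_Y$, i.e.\ $u\in D(A_{Y,K}^{V_Y})$. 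I expect the main obstacle to be precisely this final matching: carefully tracking the constant $c(u)$, the cancellation of the $\delta_1$-singularity, and the exact shape of the boundary vector $\Gamma_2 u$; by contrast, the abstract well-posedness, analyticity and trace class assertions are a direct citation of the form theory.
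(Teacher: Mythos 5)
Your proposal is correct and follows essentially the same route as the paper: the abstract assertions (invariance of $V_Y$, the restricted analytic trace-class semigroup, and the generator being the part) are drawn from the same two citations on symmetric forms, and the identification of $D(A_{Y,K}^{V_Y})$ proceeds exactly as in the paper's proof -- $A_{Y,K}u\in V_Y\subset L^2(0,1)$ forces $u\in H^2(0,1)$, then \eqref{ajoutserge10} kills the $\delta_1$-term and pins $c(u)=u'(1)-u'(0)=\mu_0(u'')$, after which \eqref{cuniqu} yields the boundary condition and $A_{Y,K}^{V_Y}u=-u''$. The one caveat you share with the paper is hidden in your phrase ``up to the sign convention'': substituting $c(u)=\mu_0(u'')$ into \eqref{cuniqu} literally gives $K\Gamma_1 u-\Gamma_2 u\in Y^\perp$ (the form used in Lemma~\ref{DA2} and Section~\ref{sec:spectral}), which is not equivalent to the ``$K\Gamma_1(u)+\Gamma_2(u)\in Y^\perp$'' in the theorem's wording, so that sign slip originates in the paper's statement rather than in your argument.
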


\begin{proof}
The part of $A_{Y,K}$ in $V_Y$ has domain
$$D(A^{V_Y}_{Y,K}):=\{u\in D(A_{Y,K}):A_{Y,K} u\in V_Y\}.$$
But according to Theorem~\ref{identK3}, for $f\in D(A_{Y,K})$ we have
$$
A_{Y,K} f=-\Id _m^{-1}(f'')-c\delta_1,
$$
with $c\in \mathbb C$ fixed by the condition \eqref{cuniqu}.
Therefore 
$$f''=-A_{Y,K} f\in {\mathcal D}'(0,1),
$$
and since
the condition $A_{Y,K} f\in V_Y$ means in particular that
$$A_{Y,K} f\in L^2(0,1),
$$
we deduce that $f$ belongs to $H^2(0,1)$.

On the other hand, using \eqref{ajoutserge10} we get
$$
-A_{Y,K} f=f''+(f'(0)-f'(1)+c)\delta_1\in L^2(0,1).
$$
Consequently $f'(0)-f'(1)+c$ must be zero, i.e.,
$$
c=f'(1)-f'(0)=\mu_0(f''),
$$
and 
$$
A_{Y,K} f=-f'',
$$
as an equality of $L^2$-functions.
By~\eqref{cuniqu} we find
$$K\Gamma_1(u)+\Gamma_2(u)\in Y^\perp.$$
This completes the proof.
\end{proof}

\begin{rem}
If in particular $Y=\{0\}^2$, then each $u\in D(A_{Y,K}^{V_Y})$ satisfies $\mu_0(u'')=\mu_1(u'')=0$. A direct computation shows that this is equivalent to the boundary conditions
\[
u'(1)=u'(0)=u(1)-u(0).
\]
One may extend $A_{\{0\}^2,K}^{V}$ to an operator defined on the whole $L^2(0,1)$ by dropping the conditions $\mu_0(u)=\mu_1(u)=0$ and keeping the above boundary conditions. This new operator is perhaps more natural and has been extensively studied in~\cite{BobMug13}.
\end{rem}

We now obtain the following well-posedness result. It should be compared with the main result in~\cite{BouMerBen08}, which our theorem below widely extends -- in fact, both the allowed initial data are more general and the notion of solution is much stronger.

\begin{theo}\label{wellp2}
Let $Y$ be a subspace of $\mathbb C^2$ and let $K$ be a $2\times 2$-matrix.
\begin{enumerate}[(1)]
\item Then the heat equation
\begin{equation}\label{heat1}
\frac{\partial u}{\partial t}(t,x)=\frac{\partial^2 u}{\partial x^2}(t,x),\qquad t> 0,\; x\in (0,1),
\end{equation}
with moment conditions
\begin{equation}\label{heat2}
P_{Y^\perp}\begin{pmatrix}
\mu_0(u(t))\\ \mu_1(u(t))
\end{pmatrix}=0,\qquad t> 0,
\end{equation}
and
\begin{equation}\label{heat3}
P_Y \left(K\begin{pmatrix}
\mu_0(u(t))\\ \mu_1(u(t))
\end{pmatrix}+ \begin{pmatrix}
\mu_0( u'') +u(1)\\ u(0)-u(1)
\end{pmatrix}\right)=0,\qquad t> 0,
\end{equation}
(here $P_Y$ and $P_{Y^\perp}$ denote the orthogonal projections of $\mathbb C^2$ onto $Y$ and $Y^\perp$, respectively) and initial condition
$$u(0,\cdot)=u_0\in H_Y$$
is governed by an analytic semigroup, thus it is well-posed. If additionally $K$ is positive definite, then 
$$\lim_{t\to \infty}\|u(t,\cdot)\|_{H^{-1}(T)}=0$$
uniformly for all initial data.

\item Similarly, the wave equation
$$\frac{\partial^2 u}{\partial t^2}(t,x)=\frac{\partial^2 u}{\partial x^2}(t,x),\qquad t\ge 0,\; x\in (0,1),$$
with moment conditions~\eqref{heat2}-\eqref{heat3} and initial conditions
$$u(0,\cdot)=u_0\in D(A_{Y,K}^{V_Y})$$
along with
$$\frac{\partial u}{\partial t}(0,\cdot)=u_1\in D(A_{Y,K})$$
is governed by a cosine operator function, and in particular it is well-posed.
\end{enumerate}
\end{theo}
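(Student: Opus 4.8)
The plan is to recast both initial-value problems as abstract Cauchy problems for the operator $A_{Y,K}$ on the Hilbert space $H_Y$ and then to read off well-posedness from the generation results collected in Corollary~\ref{prep1}. The decisive preliminary observation is that the moment conditions \eqref{heat2}--\eqref{heat3} are nothing but the membership conditions describing the domain of $A_{Y,K}$: indeed \eqref{heat2} is the requirement $\Gamma_1 u(t)\in Y$, i.e.\ $u(t)\in V_Y$, while \eqref{heat3} is exactly the relation \eqref{cuniqu} (with $c(f)=\mu_0(f'')$, as computed in the proof of Theorem~\ref{identVAK}). Consequently, asking $u(t)\in D(A_{Y,K}^{V_Y})$ is the same as imposing \eqref{heat2}--\eqref{heat3}, and on this domain Theorem~\ref{identVAK} gives $A_{Y,K}u(t)=-u(t)''$, so that the abstract equation $\dot u=-A_{Y,K}u$ (resp.\ $\ddot u=-A_{Y,K}u$) becomes the genuine heat (resp.\ wave) equation. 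I would first record this translation carefully, noting in particular that in $\mathcal D'(0,1)$ one always has $A_{Y,K}f=-f''$ regardless of the correction term $c(f)\delta_1$, since $\delta_1$ is supported at an endpoint.

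For part (1) I would argue as follows. By Corollary~\ref{prep1}, $A_{Y,K}$ generates an analytic semigroup $(e^{-tA_{Y,K}})_{t\ge0}$ on $H_Y$; hence for every $u_0\in H_Y$ the orbit $u(t):=e^{-tA_{Y,K}}u_0$ is the unique solution of $\dot u=-A_{Y,K}u$, $u(0)=u_0$, which is well-posedness. What distinguishes the parabolic case is smoothing: analyticity forces $u(t)\in D(A_{Y,K}^{\infty})\subset D(A_{Y,K}^{V_Y})$ for every $t>0$, so by the translation above $u(t)$ satisfies \eqref{heat2}--\eqref{heat3} and $A_{Y,K}u(t)=-u(t)''$; therefore $\partial_t u(t,\cdot)=-A_{Y,K}u(t)=u(t)''=\partial_x^2 u(t,\cdot)$ holds classically for $t>0$, even though $u_0$ is merely an element of $H_Y$. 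Finally, positive definiteness of $K$ ensures that its eigenvalues have strictly positive real part, so Corollary~\ref{prep1} yields exponential stability of the semigroup on $H_Y$; since $H_Y$ carries (the restriction of) the $H^{-1}(T)$-norm and is a closed subspace of $H^{-1}(T)$, this gives $\|u(t,\cdot)\|_{H^{-1}(T)}\to0$ as $t\to\infty$ uniformly over the initial data, i.e.\ in operator norm.

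For part (2) I would invoke the second assertion of Corollary~\ref{prep1}: $A_{Y,K}$ generates a cosine operator function $(C(t))_{t\in\mathbb R}$ with associated phase space $V_Y\times H_Y$; writing $S(t):=\int_0^t C(s)\,ds$ for the sine function, the unique solution of $\ddot u=-A_{Y,K}u$, $u(0)=u_0$, $\dot u(0)=u_1$ is $u(t)=C(t)u_0+S(t)u_1$. The one genuine task is to verify that, thanks to the prescribed regularity of the data, $u(t)$ lands in $D(A_{Y,K}^{V_Y})$ for \emph{every} $t$, so that $A_{Y,K}u(t)=-u(t)''$ and the abstract equation becomes the classical wave equation with moment conditions \eqref{heat2}--\eqref{heat3}. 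For $u_0\in D(A_{Y,K}^{V_Y})\subset D(A_{Y,K})$ the operators $C(t)$ commute with $A_{Y,K}$, whence $A_{Y,K}C(t)u_0=C(t)A_{Y,K}u_0$; since $A_{Y,K}u_0\in V_Y$ and the phase-space property makes $C(t)$ leave $V_Y$ invariant, it follows that $C(t)u_0\in D(A_{Y,K}^{V_Y})$. Likewise, for $u_1\in D(A_{Y,K})$ one has $A_{Y,K}S(t)u_1=S(t)A_{Y,K}u_1$, which lies in $V_Y$ because $S(t)$ maps $H_Y$ into $V_Y$, so that $S(t)u_1\in D(A_{Y,K}^{V_Y})$ as well.

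The main obstacle is exactly this last point. Because $A_{Y,K}$ does \emph{not} coincide with $-\partial_x^2$ on its full domain (the spurious term $c(f)\delta_1$ appears whenever $f\notin D(A_{Y,K}^{V_Y})$), the abstract solution must be kept inside $D(A_{Y,K}^{V_Y})$ for the abstract equation to reduce to the stated PDE. For the heat equation this is automatic for $t>0$ by analytic smoothing; for the wave equation no smoothing is available, so one must pay with the stronger initial regularity $u_0\in D(A_{Y,K}^{V_Y})$, $u_1\in D(A_{Y,K})$ and carry out the invariance bookkeeping above, relying only on the phase-space mapping properties of $C(t)$ and $S(t)$ together with their commutation with $A_{Y,K}$.
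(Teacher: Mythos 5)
Your proposal is correct and takes essentially the same route as the paper: generation via Corollary~\ref{prep1}, analytic smoothing (so that $u(t)\in D(A_{Y,K}^2)\subset D(A_{Y,K}^{V_Y})$ for $t>0$) to drop the spurious $\Id_m^{-1}$ and $c(\cdot)\delta_1$ terms in the parabolic case, and the stronger data $u_0\in D(A_{Y,K}^{V_Y})$, $u_1\in D(A_{Y,K})$ in the hyperbolic case, where your explicit commutation/phase-space bookkeeping for $C(t)$ and $S(t)$ just spells out what the paper leaves implicit. One cosmetic caveat: Theorem~\ref{identVAK} is stated only for hermitian $K$, so for a general matrix $K$ you should either cite Lemma~\ref{DA2} together with Remark~\ref{noidm} (as the paper does) or note that the identification of the part $A_{Y,K}^{V_Y}$ in the proof of Theorem~\ref{identVAK} uses only Theorem~\ref{identK3} and hence holds for arbitrary $K$.
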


In the proof of this theorem we will need the following result.

\begin{lemma}\label{DA2} There holds
$$D(A^2_{Y,K})=\{u\in H^3(0,1): \Gamma_1 u,\Gamma_1 u''\in Y\hbox{ and } K\Gamma_1 u-\Gamma_2 u,K\Gamma_1 u''-\Gamma_2 u''\in Y^\perp\}.$$
\end{lemma}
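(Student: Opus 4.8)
The plan is to unfold the abstract identity $D(A^2_{Y,K})=\{u\in D(A_{Y,K}):A_{Y,K}u\in D(A_{Y,K})\}$ and to translate each of the two membership requirements by means of Theorems~\ref{identK3} and~\ref{identVAK}. The first observation is that $A_{Y,K}u\in D(A_{Y,K})\subset L^2(0,1)$; in particular $A_{Y,K}u\in V_Y$, which is exactly the condition defining the part $A^{V_Y}_{Y,K}$. Hence $u\in D(A^{V_Y}_{Y,K})$, and Theorem~\ref{identVAK} yields at once that $u\in H^2(0,1)$, that $A_{Y,K}u=-u''$, and that the three conditions $\Gamma_1 u\in Y$, $\Gamma_1 u''\in Y$ and $K\Gamma_1 u-\Gamma_2 u\in Y^\perp$ hold. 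This already accounts for the first three constraints in the statement.

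It remains to encode the requirement $-u''=A_{Y,K}u\in D(A_{Y,K})$, and here I would apply Theorem~\ref{identK3} to the function $v:=-u''$. Membership $v\in H^1(0,1)$ forces $u\in H^3(0,1)$, the asserted regularity; the condition $\Gamma_1 v\in Y$ reproduces $\Gamma_1 u''\in Y$ (already available); and condition~\eqref{cuniqu} for $v$, after inserting $\Gamma_1 v=-\Gamma_1 u''$, $v(0)=-u''(0)$, $v(1)=-u''(1)$ and negating (as $Y^\perp$ is a subspace), becomes
\[
K\Gamma_1 u'' + \begin{pmatrix} u''(1)-c(v)\\ u''(0)-u''(1)\end{pmatrix}\in Y^\perp,
\]
which is precisely the fourth asserted condition $K\Gamma_1 u''-\Gamma_2 u''\in Y^\perp$, once $\Gamma_2 u''$ is read as explained below. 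The converse inclusion then follows by running the same two equivalences backwards: the four conditions first place $u$ into $D(A^{V_Y}_{Y,K})$ by Theorem~\ref{identVAK} (so that $A_{Y,K}u=-u''$), and then place $-u''$ into $D(A_{Y,K})$ by Theorem~\ref{identK3}, whence $A_{Y,K}u\in D(A_{Y,K})$ and $u\in D(A^2_{Y,K})$.

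The hard part is the very meaning of $\Gamma_2 u''$. Since $A^2_{Y,K}u$ need not lie in $L^2(0,1)$, the function $-u''=A_{Y,K}u$ lies in $D(A_{Y,K})$ but generically only in $H^1(0,1)$, not in $H^2(0,1)$ (that $D(A_{Y,K})$ genuinely contains non-$H^2$ elements is witnessed by the example $x\mapsto|x-\frac12|+\alpha x+\beta$ discussed above); consequently the quantity $\mu_0\big((u'')''\big)=\mu_0(u'''')$ entering the literal first entry of $\Gamma_2 u''$ from~\eqref{gammagrand} is not defined on all of $D(A^2_{Y,K})$. The resolution is that the honest fourth condition is simply~\eqref{cuniqu} for $-u''$, in which only the well-defined constant $c(v)=c(-u'')=-\mu_0(A^2_{Y,K}u)$ occurs; comparing it with the display above shows that $\Gamma_2 u''$ must be understood with its term $\mu_0(u'''')$ replaced by $-c(-u'')=\mu_0(A^2_{Y,K}u)$, which coincides with the literal $\Gamma_2 u''$ exactly when $-u''\in D(A^{V_Y}_{Y,K})$. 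I would therefore make this convention explicit at the outset and verify this consistency. Finally, the exceptional subspaces $Y=\{0\}^2$ and $Y=\{0\}\times\mathbb C$ must be handled separately, but they are easier: there $H_Y=H$ forces $\mu_0(A_{Y,K}v)=0$, hence $c(-u'')=0$, and the fourth condition simplifies accordingly while the argument is otherwise unchanged.
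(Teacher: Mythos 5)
Your proof is correct, but it follows a genuinely different route from the paper's. The paper establishes the essential inclusion ``$\subset$'' by a direct distributional bootstrap: since $A_{Y,K}u\in D(A_{Y,K})\subset H^1(0,1)$, testing the representation $A_{Y,K}u=-\Id_m^{-1}(u'')-c(u)\delta_1$ against $v\in{\mathcal D}(0,1)$ -- where $\delta_1$ and the discrepancy between $\Id_m^{-1}(u'')$ and $u''$ are invisible -- yields $u''=-A_{Y,K}u\in H^1(0,1)$, hence $u\in H^3(0,1)$ in one stroke, and the converse inclusion is declared clear. You instead compose the two domain identifications already proved: Theorem~\ref{identVAK} for the inner layer (from $A_{Y,K}u\in D(A_{Y,K})\subset V_Y$ you get $u\in D(A_{Y,K}^{V_Y})$, so $u\in H^2(0,1)$ and $A_{Y,K}u=-u''$) and Theorem~\ref{identK3} applied to $-u''$ for the outer layer. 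Your route buys two things the paper leaves implicit: the explicit bookkeeping of all four conditions in both directions, and -- your ``hard part'' -- the observation that $\Gamma_2 u''$ is not literally defined on $H^3(0,1)$ (for $u'''\in L^2(0,1)$ the quantity $\mu_0(u'''')$ has no trace-theoretic meaning), together with the reading $\mu_0(u''''):=\mu_0(A^2_{Y,K}u)=-c(-u'')$, which is exactly the one forced by~\eqref{cuniqu} and by the relation $c(f)=-\mu_0(A_{Y,K}f)$ from the proof of Theorem~\ref{identK3}; compare the unexplained entry $\mu_0(u^{(4)})$ for $u\in H^3(0,1)$ in the proof of Corollary~\ref{regulsol}, and note that your route gives $c(u)=\mu_0(u'')$ on $D(A^2_{Y,K})$, which is more accurate than the closing claim $c(u)=0$ in the paper's proof (valid only when $H_Y=H$). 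Two caveats: Theorem~\ref{identVAK} is stated for hermitian $K$, while the present lemma is used in Theorem~\ref{wellp2} for arbitrary $K$, so you should state explicitly that the identification of $D(A_{Y,K}^{V_Y})$ in its proof nowhere uses hermitian-ness (the paper's duality computation is precisely what makes its argument independent of that theorem); and you use the sign convention $K\Gamma_1 u-\Gamma_2 u\in Y^\perp$, consistent with~\eqref{gammagrand} and with the statement being proved, whereas Theorem~\ref{identVAK} literally carries a plus sign -- a typo you have silently, and correctly, fixed.
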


\begin{proof}
The inclusion ``$\supset$'' is clear. In order to prove that ``$\subset$'' also holds, take 
$u\in D(A^2_{Y,K})$: Then $u\in H^1(0,1)$ is such that $A_{Y,K}u=-\Id ^{-1}_m u''+c(u)\delta_1\in H^1(0,1)$. It clearly suffices to prove that $u''\in H^1(0,1)$. 
Now, 
\[
\langle u'',v,\rangle_{H^{-1}(0,1)-H^1_0(0,1)} 
= \langle \Id _m^{-1} u''-c(u)\delta_1,v\rangle_{H-H^1(T)}
= -\langle A_{Y,K}u,v\rangle_{H-H^1(T)}\qquad \forall v \in {\mathcal D}(0,1).
\]

Because by assumption $A_{Y,K}u\in H^1(0,1)$ we deduce that in fact 
\[\langle u'',v,\rangle_{H^{-1}(0,1)-H^1_0(0,1)}= -( A_{Y,K}u|v)_{L^2},\]
hence $u''=-A_{Y,K}u\in H^1(0,1)$ and we conclude that $u\in H^3(0,1)$, as we wanted to prove. 
In particular, 
\[
\Id ^{-1}_m u''-c(u)\delta_1=u''\in H^1(0,1)\]
by Remark~\ref{noidm}, hence necessarily $c(u)=0$.
\end{proof}

\begin{proof}[Proof of Theorem~\ref{wellp2}]
Since the operator $A_{Y,K}$ is a semigroup generator, well-posedness of the corresponding parabolic problem is clear. By construction this semigroup yields the solution of the evolution equation
\begin{equation}\label{heatidm}
\frac{\partial u}{\partial t}(t,x)={\rm Id\, }_m^{-1}\frac{\partial^2 u}{\partial x^2}(t,x)-c(u(t,x)) \delta_1,\qquad t\ge 0,\; x\in (0,1),
\end{equation}
rather than the standard heat equation. However, by standard semigroup theory we know that $u(t):=e^{-tA_{Y,K}}u_0$ lies in $D(A_{Y,K}^2)$ for any $t>0$, as these semigroups are analytic. By Lemma~\ref{DA2} $D(A_{Y,K}^2)\subset H^2(0,1)$ and the claim follows by Remark~\ref{noidm}.

For the second assertion, since cosine functions keep the regularity of initial data but offer no additional smoothing effect, we are forced to reach $H^2(0,1)$-regularity of solutions (which by Remark~\ref{noidm} is sufficient to finally drop $\Id _m^{-1}$) by actually imposing more regular data. This is obtained if $u_0$ lies in the domain of the generator's part in the form domain; and $u_1$ lies in the generator's domain, respectively.
\end{proof}

A major feature of our semigroup approach -- in particular in comparison with the Galerkin method used in some earlier articles on this subject -- lies in the possibility to deduce optimal regularity results for solutions. For example it is known that due to analyticity the semigroup operators $e^{-tA_{Y,K}}$ map $H_Y$ into $D(A_{Y,K})$ for all $Y$ and $K$ for all $t>0$, hence in particular they map (a closed subspace of) $L^1(0,1)$ into (a closed subspace of) $L^\infty(0,1)$, and by the Dunford--Pettis theorem (see~\cite[\S~7.3.1]{Are04}) we deduce that each $e^{-tA_{Y,K}}$ is an integral operator associated with an $L^\infty$-kernel. Furthermore, the following holds.

\begin{cor}\label{regulsol}
Let $Y$ be a subspace of ${\mathbb C}^2$ and $K$ a $2\times 2$-matrix and let $u_0\in H_Y$. Then the unique solution $u$ to the initial value problem associated with~\eqref{heat1}-\eqref{heat2}-\eqref{heat3} satisfies $u(t,\cdot)\in C^\infty([0,1])$ and moreover $u$ and its derivatives fulfill the non-local boundary conditions
$$\begin{pmatrix}
u^{(2h-1)}(t,1)-u^{(2h-1)}(t,0)\\ u^{(2h-2)}(t,1)-u^{(2h-2)}(t,0)-u^{(2h-1)}(t,0)
\end{pmatrix}\in Y\qquad \hbox{for all }h\in \mathbb N^*$$
along with
$$K\begin{pmatrix}
u^{(2h-1)}(t,1)-u^{(2h-1)}(t,0)\\ u^{(2h-2)}(t,1)-u^{(2h-2)}(t,0)-u^{(2h-1)}(t,0)
\end{pmatrix}-\begin{pmatrix}
u^{(2h+1)}(t,1)-u^{(2h+1)}(t,0)+u^{(2h)}(t,1)\\ u^{(2h)}(t,0)-u^{(2h)}(t,1)
\end{pmatrix}\in Y^\perp\quad \hbox{for all }h\in \mathbb N^*,$$
for all $t>0$.
\end{cor}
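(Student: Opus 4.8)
The plan is to exploit the smoothing of the analytic semigroup and then to give an explicit inductive description of the iterated domains $D(A_{Y,K}^n)$. Since $(e^{-tA_{Y,K}})_{t\ge0}$ is analytic of angle $\frac{\pi}{2}$ by Corollary~\ref{prep1}, for every $u_0\in H_Y$ and every $t>0$ the solution $u(t)=e^{-tA_{Y,K}}u_0$ lies in $\bigcap_{n\ge1}D(A_{Y,K}^n)$. All the asserted properties of $u(t,\cdot)$ will be read off from this membership, so the whole argument reduces to understanding the higher domains $D(A_{Y,K}^n)$.

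The heart of the matter is to upgrade Lemma~\ref{DA2} to arbitrary powers, proving by induction on $n$ that $D(A_{Y,K}^n)\subset H^{2n-1}(0,1)$ and that on this domain $A_{Y,K}$ acts simply as $-\frac{d^2}{dx^2}$, so that $A_{Y,K}^h v=(-1)^h v^{(2h)}$ for every $v\in D(A_{Y,K}^n)$ and every $h\le n$. The inductive step mimics the bootstrap of Lemma~\ref{DA2}: if $v\in D(A_{Y,K}^n)$ then $A_{Y,K}v\in D(A_{Y,K}^{n-1})\subset H^{2n-3}(0,1)$, and using Theorem~\ref{identK3} together with Remark~\ref{noidm} one checks that the singular coefficient $c(v)$ vanishes and that $A_{Y,K}v=-v''$ as $L^2$-functions; hence $v''=-A_{Y,K}v\in H^{2n-3}(0,1)$ and $v\in H^{2n-1}(0,1)$. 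Applied to $u(t)\in\bigcap_n D(A_{Y,K}^n)$ this yields $u(t,\cdot)\in\bigcap_n H^{2n-1}(0,1)=C^\infty([0,1])$ and the key identity $A_{Y,K}^h u(t)=(-1)^h u(t)^{(2h)}$ for all $h$.

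It then remains to extract the boundary conditions. Fix $h\in\mathbb N^*$. Since $u(t)\in D(A_{Y,K}^{h+2})$, the function $A_{Y,K}^h u(t)=(-1)^h u(t)^{(2h)}$ belongs to $D(A_{Y,K}^{V_Y})$, because $A_{Y,K}$ maps it into $D(A_{Y,K})$, which is contained in the form domain $V_Y$. Theorem~\ref{identVAK} therefore applies to $v:=u(t)^{(2h)}$ and gives $\Gamma_1 v\in Y$ as well as $K\Gamma_1 v-\Gamma_2 v\in Y^\perp$; the harmless scalar $(-1)^h$ does not affect membership in the subspaces $Y$ and $Y^\perp$. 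One finally rewrites $\Gamma_1 v$ and $\Gamma_2 v$ through boundary traces: integrating by parts,
\[
\mu_0\big(u^{(2h)}\big)=u^{(2h-1)}(1)-u^{(2h-1)}(0),\qquad \mu_1\big(u^{(2h)}\big)=u^{(2h-2)}(1)-u^{(2h-2)}(0)-u^{(2h-1)}(0),
\]
and, inserting $\mu_0\big((u^{(2h)})''\big)=u^{(2h+1)}(1)-u^{(2h+1)}(0)$ into the definition~\eqref{gammagrand} of $\Gamma_2$ together with the traces $u^{(2h)}(0),u^{(2h)}(1)$, one recovers exactly the two displayed vectors. This establishes both families of conditions for every $h\ge1$.

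I expect the regularity induction of the second paragraph to be the main obstacle. Although each step merely repeats the bootstrap of Lemma~\ref{DA2}, some care is needed to confirm that the coefficient of $\delta_1$ vanishes at every level and that $A_{Y,K}$ genuinely reduces to $-\frac{d^2}{dx^2}$ there, which is precisely where Remark~\ref{noidm} is invoked. Once the identity $A_{Y,K}^h u(t)=(-1)^h u(t)^{(2h)}$ is secured, the translation into boundary values is a routine application of Theorem~\ref{identVAK} and integration by parts.
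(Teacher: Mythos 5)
Your proposal is correct and follows essentially the same route as the paper: analytic smoothing places $u(t)$ in $\bigcap_{n}D(A_{Y,K}^{n})$, a Lemma~\ref{DA2}-style bootstrap induction yields $C^\infty([0,1])$-regularity together with $A_{Y,K}^{h}u(t)=(-1)^{h}u(t)^{(2h)}$, and the displayed conditions then follow from the integration-by-parts trace identities for $\Gamma_1 u^{(2h)}$ and $\Gamma_2 u^{(2h)}$. The only cosmetic difference is that the paper carries the conditions $\Gamma_1 u^{(2\ell)}\in Y$ and $K\Gamma_1 u^{(2\ell)}-\Gamma_2 u^{(2\ell)}\in Y^\perp$ inside the inductive characterization of $D(A_{Y,K}^{h})$ itself, whereas you extract them a posteriori by applying Theorem~\ref{identVAK} to $A_{Y,K}^{h}u(t)$ (and your exponent $D(A_{Y,K}^{n})\subset H^{2n-1}(0,1)$ is in fact the indexing consistent with Lemma~\ref{DA2}).
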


For example, for $Y=\{0\}\times \mathbb C$ and $K=0$ this amounts to saying that $u(t,\cdot)$ and all its derivatives fulfill periodic boundary conditions.

\begin{proof}
In a way similar to Lemma~\ref{DA2} it can be proved by induction that in fact for all $h\in \mathbb N$
$$D(A^h_{Y,K})=\{u\in H^{2h+1}(0,1): \Gamma_1 u^{(2\ell)}\in Y\hbox{ and } K\Gamma_1 u^{(2\ell)}-\Gamma_2 u^{(2\ell)}\in Y^\perp \;\forall \ell\le h \},$$
hence in particular
$$\bigcap_{h\in \mathbb N} D(A^h_{Y,K})=\{u\in C^\infty([0,1]): \Gamma_1 u^{(2\ell)}\in Y\hbox{ and } K\Gamma_1 u^{(2\ell)}-\Gamma_2 u^{(2\ell)}\in Y^\perp \;\forall \ell\in \mathbb N \}.$$
Observe that for $u\in H^3(0,1)$
$$\Gamma_2 u=\begin{pmatrix}
u'(1)-u'(0)+u(1)\\ u(0)-u(1)
\end{pmatrix}$$
and moreover
$$\Gamma_1 u''=\begin{pmatrix}
u'(1)-u'(0)\\ u(1)-u(0)-u'(0)
\end{pmatrix}$$
along with
$$\begin{array}{rcll}
\Gamma_2 u''&=&\begin{pmatrix}
\mu_0 (u^{(4)})+u''(1)\\ u''(0)-u''(1)
\end{pmatrix}\qquad &\hbox{if }u\in H^3(0,1)\\
&=&\begin{pmatrix}
u^{(3)}(1)-u^{(3)}(0)+u''(1)\\ u''(0)-u''(1)
\end{pmatrix}\qquad & \hbox{if }u\in H^4(0,1).
\end{array}$$
Reasoning similarly we can prove by induction that in fact for $u\in C^\infty([0,1])$
$$\Gamma_1 u^{(2h)}=\begin{pmatrix}
u^{(2h-1)}(1)-u^{(2h-1)}(0)\\ u^{(2h-2)}(1)-u^{(2h-2)}(0)-u^{(2h-1)}(0)
\end{pmatrix},\qquad 
\Gamma_2 u^{(2h)}=\begin{pmatrix}
u^{(2h+1)}(1)-u^{(2h+1)}(0)+u^{(2h)}(1)\\ u^{(2h)}(0)-u^{(2h)}(1)
\end{pmatrix},$$
for all $h\in \mathbb N^*$. This concludes the proof, since it is well-known that an analytic semigroup maps immediately into the domain of any power of its generator.
\end{proof}

\section{Spectral analysis}\label{sec:spectral}

Reminding that $A_{Y,K}$ has compact resolvent (due to the compact embedding of $L^2(0,1)$ into $H^{-1}(T)$) and that $A_{Y,K}$ is self-adjoint if $K$ is hermitian, we promptly obtain the following.

\begin{lemma}\label{lemma:specgener}
Let $Y$ be a subspace of $\mathbb C^2$ and let $K$ be a $2\times 2$-matrix. Then the operator $A_{Y,K}$ has pure point spectrum, which lies in $\mathbb R$ if $K$ is hermitian.
\end{lemma}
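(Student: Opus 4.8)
The plan is to exploit the two structural facts recalled immediately before the statement: that $A_{Y,K}$ has compact resolvent and that it is self-adjoint whenever $K$ is hermitian. First I would justify the compactness of the resolvent from the form picture. The form $a_K$ is $H_Y$-elliptic on the form domain $V_Y$ (carrying the $L^2$-inner product), so a resolvent $(\lambda+A_{Y,K})^{-1}$ for suitable $\lambda$ maps $H_Y$ boundedly into $V_Y$ and the resolvent factors through the canonical embedding $V_Y\hookrightarrow H_Y$. Since $H_Y$ is either $H$ or $H^{-1}(T)$, it suffices to observe that $L^2(0,1)$ embeds compactly into $H^{-1}(T)$. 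I would stress that this is the relevant embedding, rather than the more familiar compact embedding $H^1(0,1)\hookrightarrow L^2(0,1)$, precisely because the pivot space here is a Sobolev space of negative order; the compactness of $L^2(0,1)\hookrightarrow H^{-1}(T)$ follows by duality from the compactness of $H^1(T)\hookrightarrow L^2(0,1)$, together with the boundedness of $P$ from $H^{-1}(T)$ to $L^2(0,1)$ established in Lemma~\ref{lemma1}.

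Next, I would invoke the classical fact that a closed operator with nonempty resolvent set whose resolvent is compact has purely discrete spectrum: $\sigma(A_{Y,K})$ consists solely of isolated eigenvalues of finite algebraic multiplicity, accumulating only at infinity. This is exactly the assertion that the spectrum is pure point, and it is a standard consequence of the spectral theory of operators with compact resolvent (equivalently, of analytic Fredholm theory applied to $\lambda\mapsto(\lambda+A_{Y,K})^{-1}$). No further input on $Y$ or $K$ is needed at this stage.

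Finally, when $K$ is hermitian, Corollary~\ref{prep1} guarantees that $A_{Y,K}$ is self-adjoint (and semi-bounded), so its spectrum is contained in $\mathbb R$; combined with the previous step, every eigenvalue is then real. I do not expect a genuine obstacle: once the two recalled facts are in place, both assertions are immediate. The only point that deserves care is the compactness of the embedding into the negative-order pivot space $H^{-1}(T)$, which is where the unusual functional framework of the paper enters and which I would verify explicitly as indicated above.
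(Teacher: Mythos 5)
Your proposal is correct and follows essentially the same route as the paper, which obtains the result directly from the compact resolvent of $A_{Y,K}$ (via the compact embedding of $L^2(0,1)$ into $H^{-1}(T)$) together with the self-adjointness of $A_{Y,K}$ for hermitian $K$ from Corollary~\ref{prep1}. The only difference is that you spell out the compactness of $L^2(0,1)\hookrightarrow H^{-1}(T)$ by duality from $H^1(T)\hookrightarrow L^2(0,1)$, a detail the paper asserts without proof; your verification is sound.
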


In view of this Lemma, if $K$ is hermitian we denote by 
$\lambda_{Y,K,k}^2$, $k\in {\mathbb N}^*$, the eigenvalues of $A_{Y,K}$ enumerated in increasing order.

In this section we will describe the spectrum of the operator $A_{Y,K}$
for all possible subspaces $Y$ and when $K$ is hermitian, obtaining in particular in all cases an asymptotic result of Weyl's type. While we do not discuss the dependence of the spectrum with respect to the variation of the subspaces $Y$, the spaces $Y=\{0\}^2$ and $Y={\mathbb C}^2$ represent in fact the extremal cases, in the following sense.

\begin{prop}\label{prop:specgener}
Let $Y_1,Y_2$ be subspaces of $\mathbb C^2$ and let $K_1,K_2$ be hermitian $2\times 2$-matrices. Denote by $A_{Y_1,K_1}$ and $A_{Y_2,K_2}$ the operators associated with the form $a_{K_1}$ with domain $V_{Y_1}$ and with the form $a_{K_2}$ with domain $V_{Y_2}$, respectively. 
If $Y_2$ is a subspace of $Y_1$ and the matrix $K_2-K_1$ is positive semidefinite, then 
$$\lambda_{Y_1,K_1,k}^2\le \lambda_{Y_2,K_2,k}^2,\qquad \forall k\in \mathbb N.$$
\end{prop}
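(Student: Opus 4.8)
The plan is to derive both inequalities at once from the Courant--Fischer min-max characterization of the eigenvalues. Since each $K_i$ is hermitian, the form $a_{K_i}$ is symmetric, bounded and $H_{Y_i}$-elliptic, so (by Lemma~\ref{lemma:specgener} together with the compact embedding of $L^2(0,1)$ into $H^{-1}(T)$) the operator $A_{Y_i,K_i}$ is self-adjoint, bounded below and has compact resolvent. Its eigenvalues, enumerated increasingly and counted with multiplicity, are therefore given by
$$\lambda_{Y_i,K_i,k}^2=\min_{\substack{U\subseteq V_{Y_i}\\ \dim U=k}}\ \max_{\substack{u\in U\\ u\neq 0}}\frac{a_{K_i}(u,u)}{\|u\|_{H_{Y_i}}^2},\qquad i=1,2.$$
The whole argument then reduces to comparing these two Rayleigh quotients on a common class of test subspaces.

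First I would record two elementary monotonicity facts. Since $V_Y=\{f\in L^2(0,1):(\mu_0(f),\mu_1(f))\in Y\}$ depends monotonically on $Y$, the hypothesis $Y_2\subseteq Y_1$ gives the inclusion $V_{Y_2}\subseteq V_{Y_1}$ of form domains. On the other hand, positive semidefiniteness of $K_2-K_1$ yields, for every $f\in V_{Y_2}$,
$$a_{K_2}(f,f)-a_{K_1}(f,f)=\big((K_2-K_1)\Gamma_1 f\,\big|\,\Gamma_1 f\big)_{\mathbb C^2}\ge 0,$$
that is, $a_{K_1}(f,f)\le a_{K_2}(f,f)$ pointwise on the smaller domain.

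The point that genuinely needs care -- and the main obstacle -- is that the pivot space $H_{Y_i}$, hence the denominator of the Rayleigh quotient, depends on $Y_i$: it equals $H$ for $Y_i\in\{\{0\}^2,\{0\}\times\mathbb C\}$ and $H^{-1}(T)$ otherwise. I would check that the two norms nevertheless \emph{coincide} on the common domain $V_{Y_2}$. If $H_{Y_2}=H$, then every $f\in V_{Y_2}$ satisfies $\mu_0(f)=0$, so the extra $\mu_0$-term in the $H^{-1}(T)$-norm vanishes and $\|f\|_{H_{Y_1}}^2=\|Pf\|_{L^2}^2=\|f\|_{H_{Y_2}}^2$, irrespective of whether $H_{Y_1}$ is $H$ or $H^{-1}(T)$. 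If instead $H_{Y_2}=H^{-1}(T)$, then $Y_2\notin\{\{0\}^2,\{0\}\times\mathbb C\}$; as these are the only two subspaces giving the smaller pivot space and $Y_2\subseteq Y_1$, the larger space $Y_1$ is likewise excluded, whence $H_{Y_1}=H^{-1}(T)=H_{Y_2}$ and the norms agree trivially.

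With these three observations the chain of inequalities is immediate. Restricting the outer minimum to subspaces $U\subseteq V_{Y_2}\subseteq V_{Y_1}$ can only enlarge it, so
$$\lambda_{Y_1,K_1,k}^2\le\min_{\substack{U\subseteq V_{Y_2}\\ \dim U=k}}\ \max_{\substack{u\in U\\ u\neq 0}}\frac{a_{K_1}(u,u)}{\|u\|_{H_{Y_1}}^2};$$
replacing $\|u\|_{H_{Y_1}}$ by the equal quantity $\|u\|_{H_{Y_2}}$ and using $a_{K_1}(u,u)\le a_{K_2}(u,u)$ for $u\in V_{Y_2}$ bounds the right-hand side above by $\lambda_{Y_2,K_2,k}^2$, which is exactly the assertion. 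The only genuine subtlety to be spelled out carefully is the coincidence of norms in the third paragraph; everything else is the standard monotonicity of min-max values under enlargement of the form domain and increase of the form.
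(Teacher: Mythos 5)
Your proof is correct and takes essentially the same approach as the paper, which likewise deduces the inequality from the Courant--Fischer minimax theorem using the inclusion $V_{Y_2}\subseteq V_{Y_1}$ and the fact that the pivot spaces $H_{Y_1}$ and $H_{Y_2}$ are endowed with the same norm. Your only addition is to spell out the case analysis showing the norms coincide on the common form domain (the $\mu_0$-term vanishes when $H_{Y_2}=H$, and $Y_2\subseteq Y_1$ forces $H_{Y_1}=H^{-1}(T)$ when $H_{Y_2}=H^{-1}(T)$), a point the paper asserts without proof.
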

\begin{proof}
The assertion is a direct consequence of the Courant--Fischer minimax theorem, since the operators $A_{Y_1,K_1},A_{Y_2,K_2}$ are self-adjoint on Hilbert spaces ($H_{Y_1}$ and $H_{Y_2}$ respectively) that are endowed with the same norm and moreover $V_{Y_2}$ is a subspace of $V_{Y_1}$ under the above assumptions.
\end{proof}
Combining the previous results we can slightly improve the assertion on exponential stability of $(e^{-tA_{Y,K}})_{t\ge 0}$ contained in Corollary~\ref{prep1}, under the assumption that $K$ is just positive semidefinite.

\begin{rem}\label{rk4.3}
Here and in the remainder of this section we repeatedly use the fact that eigenfunctions of $A_{Y,K}$ are in $D(A^2_{Y,K})$ by a standard bootstrap argument, and hence in $H^2(0,1)$ by Lemma~\ref{DA2}. By Remark~\ref{noidm} we can hence regard them as solutions of the more usual eigenvalue problem
$$-u''=\lambda^2_{Y,K,k} u.$$ 
\end{rem}

\begin{cor}\label{cor:specgener}
Let $Y$ be any subspace of $\mathbb C^2$ and $K$ be hermitian and positive semidefinite. Then each eigenvalue of $A_{Y,K}$ is strictly positive, and in particular the semigroup $(e^{-tA_{Y,K}})_{t\ge 0}$ is uniformly exponentially stable.
\end{cor}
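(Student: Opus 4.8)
The plan is to exploit the fact that, for hermitian positive semidefinite $K$, the form $a_K$ is not merely accretive but genuinely coercive, and to read off the exponential decay rate directly from the form rather than from the location of the eigenvalues of $K$. This is precisely the mechanism by which the hypothesis of Corollary~\ref{prep1} (both eigenvalues of $K$ with strictly positive real part) can be relaxed to $K$ being merely positive semidefinite: the lower-order $L^2$-term in $a_K$ already supplies all the coercivity one needs.

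First I would record the key inequality. Since $K$ is positive semidefinite, for every $0\neq f\in V_Y$ we have $(K\Gamma_1 f|\Gamma_1 f)_{\mathbb C^2}\ge 0$, hence
\[
a_K(f,f)=\|f\|_{L^2}^2+(K\Gamma_1 f|\Gamma_1 f)_{\mathbb C^2}\ge \|f\|_{L^2}^2=\|f\|_{V_Y}^2,
\]
so that $a_K$ is coercive with constant $\alpha=1$ and $\omega=0$, recalling that $V_Y$ carries the $L^2$-inner product so that $\|f\|_{V_Y}=\|f\|_{L^2}$. Next I would convert this $V_Y$-coercivity into $H_Y$-coercivity: by Lemma~\ref{lemma1} the map $P$ is bounded on $L^2(0,1)$ and $\mu_0$ is plainly bounded on $L^2(0,1)$, so the embedding $V_Y\hookrightarrow H_Y$ is continuous, i.e.\ there is $C>0$ with $\|f\|_{H_Y}\le C\|f\|_{L^2}$ for all $f\in V_Y$. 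Combining the two estimates yields $a_K(f,f)\ge C^{-2}\|f\|_{H_Y}^2$.

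For the spectral statement I would then use the Rayleigh quotient. By Remark~\ref{rk4.3} an eigenfunction $u$ of $A_{Y,K}$ associated with $\lambda_{Y,K,k}^2$ lies in the form domain, and by self-adjointness (cf.\ Lemma~\ref{lemma:specgener}) the eigenvalue is real and satisfies $a_K(u,u)=\lambda_{Y,K,k}^2\|u\|_{H_Y}^2$. Hence
\[
\lambda_{Y,K,k}^2=\frac{a_K(u,u)}{\|u\|_{H_Y}^2}\ge C^{-2}>0,
\]
which proves not only strict but in fact uniform positivity of the entire spectrum.

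Finally, uniform exponential stability follows at once: the $H_Y$-coercivity gives $\Real(A_{Y,K}f|f)_{H_Y}=a_K(f,f)\ge C^{-2}\|f\|_{H_Y}^2$, so $-A_{Y,K}+C^{-2}$ is dissipative and the semigroup from Corollary~\ref{prep1} satisfies $\|e^{-tA_{Y,K}}\|_{H_Y}\le e^{-t/C^2}$ for all $t\ge 0$; alternatively, since $A_{Y,K}$ is self-adjoint its growth bound equals its spectral bound $-\inf_k\lambda_{Y,K,k}^2\le -C^{-2}<0$. I do not anticipate any genuine obstacle: the only point requiring mild care is pinning down the continuous embedding $V_Y\hookrightarrow H_Y$ that transfers coercivity from the form-domain norm to the pivot-space norm, and everything else is a direct application of the general theory of coercive forms.
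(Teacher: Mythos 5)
Your proof is correct, but it follows a genuinely different route from the paper's. The paper argues spectrally: it uses the Courant--Fischer comparison of Proposition~\ref{prop:specgener} to sandwich the eigenvalues of $A_{Y,K}$ and reduce the whole question to the single extremal operator $A_{\mathbb C^2,0}$; accretivity then gives nonnegativity, and strict positivity is settled by an explicit kernel computation (solving $u''=0$, so $u(x)=ax+b$, and checking that the conditions $\mu_0(u'')+u(1)=a+b=0$ and $u(0)-u(1)=-a=0$ force $u\equiv 0$). You bypass both the comparison principle and the kernel computation by observing that positive semidefiniteness of $K$ makes $a_K$ coercive with respect to the \emph{pivot} norm: $a_K(f,f)\ge \|f\|^2_{L^2}\ge C^{-2}\|f\|^2_{H_Y}$, where the embedding constant comes from $\|Pf\|_{L^2}\le \sqrt{2}\,\|f\|_{L^2}$ and $|\mu_0(f)|\le \|f\|_{L^2}$ (both contained in the proof of Lemma~\ref{lemmamu1weakcontinuous}), so that one may take $C^2=3$; the Rayleigh quotient and the passage from form coercivity to the contraction estimate are then standard for self-adjoint operators. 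What each approach buys: yours is shorter, uniform in $Y$ and in the semidefinite $K$, and quantitative, producing the explicit rate $\|e^{-tA_{Y,K}}\|\le e^{-t/3}$; the paper's argument yields the sharper (though implicit) uniform bound $\lambda^2_{Y,K,k}\ge \lambda^2_{\mathbb C^2,0,1}$ and meshes with the detailed spectral analysis that Section~\ref{sec:spectral} goes on to develop. Two cosmetic points: boundedness of $P$ on $L^2(0,1)$ is not literally the statement of Lemma~\ref{lemma1} (which gives boundedness from $H^{-1}(T)$ to $L^2$) --- it follows from that lemma together with the continuous embedding $L^2(0,1)\hookrightarrow H^{-1}(T)$, or directly from~\eqref{2} by Cauchy--Schwarz; and the appeal to Remark~\ref{rk4.3} is unnecessary, since eigenfunctions lie in $D(A_{Y,K})\subset V_Y$ by the very definition of the operator associated with the form.
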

\begin{proof}
Combining Lemma~\ref{lemma:specgener} and Proposition~\ref{prop:specgener} we deduce that for any 1-dimensional subspace $Y$ of $\mathbb C^2$ the $k^{\rm th}$ eigenvalue of $A_{Y,K}$ is always contained in the interval
$$[\lambda_{\mathbb C^2,K,k}^2,\lambda_{\{0\}^2,K,k}^2]=[\lambda_{\mathbb C^2,K,k}^2,\lambda_{\{0\}^2,0,k}^2]$$
(In view of Theorem~\ref{identK3}, $A_{\{0\}^2,K}=A_{\{0\}^2,0}$).
In fact, letting $K$ be positive semidefinite we deduce -- again from Proposition~\ref{prop:specgener} -- that 
$$\lambda_{\mathbb C^2,0,k}^2\le \lambda_{\mathbb C^2,K,k}^2.$$
Hence, it suffices to show that all eigenvalues of $A_{\mathbb C^2,0}$ are strictly positive. First of all, $A_{\mathbb C^2,0}$ is accretive, hence all its eigenvalues are positive. To show that 0 is not an eigenvalue, and hence that $\lambda_{\mathbb C^2,0,1}^2>0$, take $u$ such that $A_{\mathbb C^2,0}u=u''=0$, i.e., 
$$u(x):=ax+b,\qquad x\in (0,1),$$ 
for some $a,b\in \mathbb C$. Observe that
$$\mu_0( u'') +u(1)=a+b\quad \hbox{and}\quad u(0)-u(1)=-a.$$
If we impose that 
 $(\mu_0( u'') +u(1),u(0)-u(1))=(0,0)$, then clearly $a=b=0$, i.e., $u\equiv 0$.
This concludes the proof.
\end{proof}


Now we look for the eigenvalues and the eigenvectors of the operator $A_{Y,K}$ in the hermitian case and deduce a formula of Weyl's type. Hence we can always assume that an eigenvalue $\lambda$ of $A_{Y,K}$ is real. Let $\lambda\in \mathbb R$ and $u\in D(A_{Y,K})$ be such that
$$
A_{Y,K}u=\lambda^2 u.
$$
Remark~\ref{rk4.3} yields
$$u''=-\lambda^2 u.$$
By Corollary~\ref{cor:specgener} $\lambda\not=0$, 
and therefore $u$ is of the form
\begin{equation*}\label{eigenvector}
u(x)=c_1 \cos(\lambda x)+c_2 \sin(\lambda x),
\end{equation*}
for some real numbers $c_1$ and $c_2$. By direct computations we see that
\begin{eqnarray*}\label{momentseigen1}
\mu_0(u)=\int_0^1u(x)\,dx=\frac{1}{\lambda}(c_1 \sin \lambda+(1 - \cos \lambda)c_2),
\end{eqnarray*}
and
\begin{eqnarray*}
\mu_1(u)=\int_0^1(1-x)u(x)\,dx=\frac{1}{\lambda^2}\left((1 - \cos \lambda) c_1+(\lambda-\sin \lambda) c_2\right).
\label{momentseigen2}
\end{eqnarray*}
For the sake of later reference we also observe that
\begin{eqnarray*}
u(1)&=&c_1 \cos\lambda +c_2 \sin \lambda,\\
u(0)&=&c_1,\\
u'(1)&=&-\lambda c_1 \sin \lambda+\lambda c_2 \cos \lambda,\\
u'(0)&=&\lambda c_2,
\end{eqnarray*}
so that 
\begin{equation}\label{gamma1c1c2}
\Gamma_1 u ={\mathcal M}(\lambda) {\mathcal B}(\lambda)\begin{pmatrix}
c_1 \\ c_2
\end{pmatrix}:=\begin{pmatrix}
\frac{1}{\lambda} & 0 \\ 0 & \frac{1}{\lambda^2}
\end{pmatrix} \begin{pmatrix}
\sin \lambda & 1-\cos \lambda\\
1-\cos \lambda &\lambda -\sin \lambda 
\end{pmatrix}\begin{pmatrix}
c_1 \\ c_2
\end{pmatrix}
\end{equation}
while
\begin{equation}\label{gamma2c1c2}
\Gamma_2 u ={\mathcal C}(\lambda)\begin{pmatrix}
c_1 \\ c_2
\end{pmatrix}:=\begin{pmatrix}
\lambda \sin \lambda-\cos \lambda &\lambda (1-\cos \lambda)-\sin \lambda\\
\cos \lambda - 1 & \sin \lambda 
\end{pmatrix}\begin{pmatrix}
c_1 \\ c_2
\end{pmatrix},
\end{equation}
where $\Gamma_1,\Gamma_2$ are the operators introduced in~\eqref{gammagrand}.

If we are imposing integral conditions associated with a general subspace $Y$ and a general hermitian matrix $K$, then by Theorem~\ref{identK3} we know that the relevant conditions are
$$P_{Y^\perp}\Gamma_1 u=0 \qquad \hbox{and}\qquad P_Y (K\Gamma_1 u-\Gamma_2 u)=0 ,$$
or rather, taking into account~\eqref{gamma1c1c2} and~\eqref{gamma2c1c2}, 
$$\begin{pmatrix}
P_{Y^\perp}{\mathcal M}(\lambda){\mathcal B}(\lambda)\\
P_Y (K{\mathcal M}(\lambda){\mathcal B}(\lambda)-{\mathcal C}(\lambda))
\end{pmatrix}
\begin{pmatrix}
c_1\\ c_2
\end{pmatrix}
=0.$$
Since an eigenvalue corresponds to a non-trivial solution $(c_1,c_2)$ of this linear system, we directly obtain the following result.
\begin{theo}\label{tspectralDelio}
Let $K$ be a hermitian $2\times 2$-matrix and $Y$ be a subspace of $\mathbb C^2$. A number $\lambda\in \mathbb R^*$ is such that $\lambda^2$ is an eigenvalue of $A_{Y,K}$ if and only if the $4\times 2$-matrix
$$\begin{pmatrix}
P_{Y^\perp}{\mathcal M}(\lambda){\mathcal B}(\lambda)\\
P_Y (K{\mathcal M}(\lambda){\mathcal B}(\lambda)-{\mathcal C}(\lambda))
\end{pmatrix}$$
is of  rank 0 or 1.
\end{theo}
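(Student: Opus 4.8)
The plan is to convert the abstract eigenvalue equation $A_{Y,K}u=\lambda^2 u$ into a homogeneous linear system for the two coefficients of a trial eigenfunction, and then to recognise the stated rank condition as the solvability criterion for that system. First I would invoke Remark~\ref{rk4.3}: since $K$ is hermitian, every eigenfunction for $\lambda^2$ lies in $H^2(0,1)$ and solves the classical problem $-u''=\lambda^2 u$. As $\lambda\in\mathbb R^*$ is nonzero, the solution space of this ODE is exactly two-dimensional and spanned by $\cos(\lambda\cdot)$ and $\sin(\lambda\cdot)$, so every candidate eigenfunction has the form $u=c_1\cos(\lambda\cdot)+c_2\sin(\lambda\cdot)$. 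Crucially, because $\lambda\neq0$ these two functions are linearly independent, whence the map $(c_1,c_2)\mapsto u$ is a linear isomorphism onto the solution space; in particular $u\neq0$ if and only if $(c_1,c_2)\neq0$.

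Next I would encode membership of such a $u$ in $D(A_{Y,K})$. By Theorem~\ref{identK3}, in the reformulation recorded immediately before the statement, an $H^2$-solution of $-u''=\lambda^2 u$ lies in $D(A_{Y,K})$ and satisfies $A_{Y,K}u=\lambda^2u$ exactly when $P_{Y^\perp}\Gamma_1 u=0$ and $P_Y(K\Gamma_1 u-\Gamma_2 u)=0$; the auxiliary requirement $\Gamma_1 u''\in Y$ of Theorem~\ref{identVAK} adds nothing here, since $u''=-\lambda^2 u$ with $\lambda\neq0$ gives $\Gamma_1 u''=-\lambda^2\Gamma_1 u$, so it reduces to $\Gamma_1 u\in Y$. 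Substituting the explicit identities $\Gamma_1 u=\mathcal M(\lambda)\mathcal B(\lambda)\binom{c_1}{c_2}$ and $\Gamma_2 u=\mathcal C(\lambda)\binom{c_1}{c_2}$ from \eqref{gamma1c1c2}--\eqref{gamma2c1c2} collapses both conditions into the single homogeneous system
$$
\begin{pmatrix}
P_{Y^\perp}\mathcal M(\lambda)\mathcal B(\lambda)\\
P_Y(K\mathcal M(\lambda)\mathcal B(\lambda)-\mathcal C(\lambda))
\end{pmatrix}
\begin{pmatrix}c_1\\ c_2\end{pmatrix}=0,
$$
whose coefficient matrix is precisely the $4\times2$ matrix appearing in the statement.

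Combining the two steps, $\lambda^2$ is an eigenvalue of $A_{Y,K}$ if and only if there is a nonzero eigenfunction, which by the isomorphism of the first step happens if and only if the system above has a nonzero solution $(c_1,c_2)$, i.e.\ if and only if the displayed matrix has nontrivial kernel. A linear map $\mathbb C^2\to\mathbb C^4$ has nontrivial kernel exactly when its rank is strictly smaller than $2$, that is, equals $0$ or $1$; this is the asserted criterion.

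Once the reductions are assembled the argument is pure bookkeeping, so I do not anticipate a genuine obstacle; the only point that needs care is the tightness of the chain of equivalences. Concretely, I must ensure on the one hand that the parametrisation by $(c_1,c_2)$ captures \emph{every} eigenfunction and is injective---both guaranteed by $\lambda\neq0$ and the structure of constant-coefficient second-order ODEs---and on the other hand that the two projection conditions are simultaneously necessary and sufficient for domain membership, so that any nonzero kernel vector really yields an eigenfunction and not merely a function formally satisfying the boundary relations. The latter is secured by the self-adjoint identification of $A_{Y,K}$ in the hermitian case together with Theorem~\ref{identK3}; no analytic estimates enter.
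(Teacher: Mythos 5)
Your proposal is correct and follows essentially the same route as the paper: both arguments use Remark~\ref{rk4.3} to reduce to $H^2$-solutions of $-u''=\lambda^2u$, parametrise them as $c_1\cos(\lambda\cdot)+c_2\sin(\lambda\cdot)$, translate the domain conditions of Theorem~\ref{identK3} via \eqref{gamma1c1c2}--\eqref{gamma2c1c2} into the displayed $4\times2$ homogeneous system, and identify eigenvalues with nontrivial kernel, i.e.\ rank $0$ or $1$. Your explicit check that the condition $\Gamma_1u''\in Y$ from Theorem~\ref{identVAK} is automatic (since $\Gamma_1u''=-\lambda^2\Gamma_1u$) is a point the paper leaves implicit, but it does not change the argument.
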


\begin{rem}\label{rankcond-gen} 
The condition in Theorem~\ref{tspectralDelio} can be specialised in an easy way. 
\begin{enumerate}
\item If $Y=\{0\}^2$, then the condition in Theorem~\ref{tspectralDelio} reduces to
\begin{equation}\label{careq}
D(\lambda)=\lambda \sin\lambda+2\cos\lambda-2=0.
\end{equation}
In this case, each eigenvalue $\lambda$ is simple and its associated eigenspace is spanned by the function
$$
x\mapsto (\sin \lambda-\lambda) \cos(\lambda x)+ (1-\cos \lambda) \sin(\lambda x).
$$
\item In the special case of $Y={\mathbb C}^2$, the condition in Theorem~\ref{tspectralDelio} simplifies to requiring that $\lambda$ be a root of the equation
$$D_K(\lambda):={\rm det}\; (K{\mathcal M}(\lambda){\mathcal B}(\lambda)-{\mathcal C}(\lambda))=0.$$

\item Similarly, if $Y$ is spanned by $(x,y)$ with $x,y\in \mathbb C$ such that $|x|^2+|y|^2=1$, then
$$
P_Y\begin{pmatrix}u\\v\end{pmatrix}= (\bar x\;\; \bar y)\cdot \begin{pmatrix}
 u\\ v
\end{pmatrix}
\begin{pmatrix}x\\y\end{pmatrix}\quad
\hbox{ and }
P_{Y^\perp}\begin{pmatrix}u\\v\end{pmatrix}= (-\bar y\;\; \bar x)\cdot \begin{pmatrix}
u\\ v
\end{pmatrix}
\begin{pmatrix}
-y \\ x
\end{pmatrix}.
$$
Consequently, 
$$P_Y\begin{pmatrix}u\\v\end{pmatrix}=0\quad\hbox{ (resp. }P_{Y^\perp}\begin{pmatrix}u\\v\end{pmatrix}=0\hbox{)}$$ 
if and only if
$$(\bar x\;\; \bar y)\cdot 
\begin{pmatrix}
 u\\
 v
\end{pmatrix}=0\qquad\hbox{ (resp. }(-\bar y\;\; \bar x)\cdot 
\begin{pmatrix}
 u\\ v
\end{pmatrix}
=0\hbox{).}$$ 
This characterization and Theorem~\ref{tspectralDelio} show that $\lambda^2$ is an eigenvalue of $A_{Y,K}$ if and only if 
\begin{equation}
\label{Dalpha}
D_{Y,K}(\lambda)={\rm det}\;
\begin{pmatrix}
(-\bar y, \bar x){\mathcal M}(\lambda){\mathcal B}(\lambda)\\
(\bar x, \bar y) (K{\mathcal M}(\lambda){\mathcal B}(\lambda)-{\mathcal C}(\lambda))
\end{pmatrix}=0.
\end{equation}
\end{enumerate}
\end{rem}

\begin{cor}\label{cWeylgenY}
Let $K$ be hermitian and $Y$ be a subspace of $\mathbb C^2$. We distinguish the following cases.
\begin{enumerate}[(i)]
\item If $Y=\{0\}^2$, then 
\begin{equation}\label{asbehavioreven}
\lambda_{\{0\}^2,K,2k}=2k\pi,\qquad\forall k\in {\mathbb N}^*,
\end{equation}
while
\begin{equation}\label{asbehaviorodd}
\lambda_{\{0\}^2, K,2k-1}=(2k-1)\pi+O(k^{-1}),\qquad\forall k\in {\mathbb N}^*.
\end{equation}
\item If $Y=\mathbb C^2$, then
\begin{equation}\label{asbehaviora1}
\lambda_{\mathbb C^2,K,2k}=2k\pi+O({k^{-1}})\quad \hbox{ and } \quad \lambda_{\mathbb C^2,K,2k-1}=2k\pi+O(k^{-1}),\qquad \forall k\in {\mathbb N}^*.
\end{equation}
\item If $Y$ is spanned by $(0,1)$, then 
\begin{equation}\label{asbehavioraY=01}
\lambda_{Y,K,2k}=2k\pi+O({k^{-1}})\quad \hbox{ and } \quad \lambda_{Y,K,2k-1}=2k\pi+O(k^{-1}),\qquad \forall k\in {\mathbb N}^*.
\end{equation}
\item If $Y$ is spanned by $(1,\alpha)$ with $\alpha\in \mathbb C$, then 
\begin{equation}\label{asbehaviora2}
\lambda_{Y,K,k}=k\pi+O({k^{-1}}),\qquad \forall k\in {\mathbb N}^*.
\end{equation}
\end{enumerate}
In all cases, the Weyl-type asymptotics
$$\lim_{k\to \infty}\frac{\lambda^2_{Y,K,k}}{k^2\pi^2}=1$$
holds
\end{cor}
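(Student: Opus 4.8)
The plan is to run everything through the explicit characteristic functions recorded in Remark~\ref{rankcond-gen}: in each case I would isolate a leading transcendental term with explicit zeros and treat the remainder as a perturbation decaying like $O(\lambda^{-1})$. Since $K$ is hermitian, Lemma~\ref{lemma:specgener} guarantees that all the $\lambda$ in play are real, so every root-count may be done on the real axis. The starting point is two elementary determinant identities, obtained by direct computation: $\det\mathcal B(\lambda)=\lambda\sin\lambda+2\cos\lambda-2$ (this is the function $D$ of \eqref{careq}) and $\det\mathcal C(\lambda)=2\lambda(1-\cos\lambda)-\sin\lambda$, together with the entrywise bound $\mathcal M(\lambda)\mathcal B(\lambda)=O(\lambda^{-1})$. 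These split the four cases into a dichotomy. For $Y=\{0\}^2$ and $Y=\mathrm{Span}(1,\alpha)$ the leading term is of \emph{$\sin$-type}: the former is literally $D(\lambda)$, whose leading part is $\lambda\sin\lambda$, while expanding the $2\times2$ determinant \eqref{Dalpha} --- playing the row $\mathcal M\mathcal B=O(\lambda^{-1})$ against the $O(\lambda)$ row $-(\bar x,\bar y)\mathcal C$ --- collapses to $D_{Y,K}(\lambda)=\bar x^{\,2}\sin\lambda+O(\lambda^{-1})$, with $\bar x\neq0$. For $Y=\mathbb C^2$ and $Y=\mathrm{Span}(0,1)$ the leading term is of \emph{$(1-\cos)$-type}: $D_K=\det\mathcal C+O(1)$, leading part $2\lambda(1-\cos\lambda)$, and the analogous expansion for $(0,1)$ gives $D_{Y,K}(\lambda)=\tfrac{2(1-\cos\lambda)}{\lambda}+O(\lambda^{-2})$.

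In the $\sin$-type cases the roots are easy to place. Writing $\lambda=m\pi+t$ and using $\sin\lambda=(-1)^m t+O(t^3)$ turns each characteristic equation into a balance forcing $t=O(m^{-1})$; for $Y=\mathrm{Span}(1,\alpha)$ this gives \eqref{asbehaviora2} directly. For $Y=\{0\}^2$ the same substitution shows that $D(2k\pi)=0$, so the even multiples are \emph{exact} eigenvalues as in \eqref{asbehavioreven}, whereas near an odd multiple $D((2k-1)\pi+t)\approx -(2k-1)\pi\,t-4$ forces $t=O(k^{-1})$, which is \eqref{asbehaviorodd}. A sign-change count (or a Rouch\'e/argument-principle count) on the windows $[m\pi-\tfrac\pi2,\,m\pi+\tfrac\pi2]$ then shows there is exactly one root per window, fixing the enumeration $\lambda_{Y,K,k}$.

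The delicate point, and the main obstacle, is the $(1-\cos)$-type pair of cases, where the leading term has a \emph{double} zero at each $2k\pi$; a naive balance would only yield $t=O(k^{-1/2})$. The remedy is to observe that the characteristic function is itself abnormally small at the centre of each cluster: since $\mathcal M\mathcal B(2k\pi)=\mathrm{diag}\big(0,\tfrac{1}{2k\pi}\big)$ and $\mathcal C(2k\pi)=\mathrm{diag}(-1,0)$, one gets $D_K(2k\pi)=\tfrac{K_{22}}{2k\pi}=O(k^{-1})$ and $D_{Y,K}(2k\pi)=0$. Expanding $\lambda=2k\pi+t$ produces a local quadratic $c_2t^2+c_1t+c_0$ with $c_2\sim 2k\pi$, $c_1=O(1)$ and $c_0=O(k^{-1})$; hence the two roots have sum $-c_1/c_2=O(k^{-1})$ and product $c_0/c_2=O(k^{-2})$, and --- because hermiticity of $K$ makes them real --- each of them is $O(k^{-1})$. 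This yields \eqref{asbehaviora1} and \eqref{asbehavioraY=01} simultaneously, with exactly two eigenvalues clustering at each $2k\pi$ (for $Y=\mathrm{Span}(0,1)$ one of the two being $2k\pi$ itself, because $D_{Y,K}(2k\pi)=0$); a Rouch\'e count on $[2k\pi-\tfrac\pi2,\,2k\pi+\tfrac\pi2]$ confirms the multiplicity two and the enumeration.

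Finally, the four refined asymptotics each give $\lambda_{Y,K,k}/(k\pi)\to1$: this is immediate in the $\sin$-type cases, and in the clustered cases it holds for the even-indexed eigenvalues directly and for the odd-indexed ones because $\lambda_{Y,K,2k-1}/((2k-1)\pi)=\tfrac{2k}{2k-1}\big(1+o(1)\big)\to1$. Squaring yields the Weyl limit $\lambda_{Y,K,k}^2/(k^2\pi^2)\to1$, as claimed.
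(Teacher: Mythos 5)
Your proposal is correct and follows the same skeleton as the paper's proof: both reduce the problem to the scalar characteristic functions coming from Theorem~\ref{tspectralDelio} and Remark~\ref{rankcond-gen}, split off a leading trigonometric term (of $\sin$-type for $Y=\{0\}^2$ and $Y=\mathrm{Span}(1,\alpha)$, of $(1-\cos)$-type for $Y=\mathbb C^2$ and $Y=\mathrm{Span}(0,1)$) with an $O(\lambda^{-1})$ remainder, and localize the roots near $k\pi$, resp.\ $2k\pi$. The genuine difference lies in the double-zero cases (ii)--(iii). The paper disposes of them by Rouch\'e's theorem in balls $B_k(2k\pi,\epsilon_k)$ with $\epsilon_k$ ``fixed appropriately'', which is terser than it looks: with $f_\infty=1-\cos$ and only the crude bound $|r|=O(k^{-1})$ on $\partial B_k$, Rouch\'e forces $\epsilon_k\gtrsim k^{-1/2}$ and would only give $O(k^{-1/2})$; the radius $\epsilon_k=C/k$ works because the coefficient $g_1$ in the remainder vanishes at the cluster centers, a fact the paper leaves implicit in its computation of the $g_i$. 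Your remedy makes precisely this ingredient explicit -- $D_K(2k\pi)=k_{22}/(2k\pi)$, resp.\ $D_{Y,K}(2k\pi)=0$ -- and then extracts $t=O(k^{-1})$ from the local quadratic by Vieta, using reality of the relevant roots. This buys a self-contained explanation of why the clusters shrink at rate $k^{-1}$ rather than $k^{-1/2}$, where the paper relies on the reader reconstructing the refined Rouch\'e estimate; conversely, the paper's route needs no appeal to hermiticity at this point, whereas in your argument reality of the roots must indeed be imported from self-adjointness of $A_{Y,K}$ (the approximate quadratic alone does not give it), and your Vieta step should be preceded by the fixed-radius Rouch\'e count you mention, so that the local quadratic captures \emph{all} roots in the window. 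Two further remarks: your leading coefficient $\bar x^{\,2}$ in case (iv) is what the determinant expansion actually yields (the paper's displayed coefficient $\bar x$ is a harmless slip, and only its nonvanishing matters), and your balance $D((2k-1)\pi+t)\approx -(2k-1)\pi t-4$ is the correct local form. Finally, like the paper, you pass quickly over the bookkeeping matching the root count to the increasing enumeration $\lambda_{Y,K,k}$ (ruling out non-real roots occupying a ball and index shifts); your sign-change count on the real windows is the right device for this, and your level of detail here is no lower than the paper's own.
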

\begin{proof}
We prove (i) by directly checking that $2k\pi$ is a solution of~\eqref{careq} for all $k\in {\mathbb N}^*$.
For the other roots, we notice that~\eqref{careq} is then equivalent to
$$
f_{\infty}(\lambda)+r(\lambda)=0,$$
where
$ f_{\infty}(z):=\sin z$ is analytic and the remainder $r(z):=\frac{2(\cos z-1)}{z}$ is also analytic (except at $z=0$) and satisfies
$$|r(z)|\leq \dfrac{4}{|z|},\qquad \forall |z|\ne 0.
$$
We prove~\eqref{asbehaviorodd} by applying Rouch\'e's theorem in the ball $B_k=B_k(k \pi,\epsilon_k)$ where $0<\epsilon_k \leq 1 $ will be fixed later on.
We first estimate $|f_{\infty}(z)|$ from below on $\partial B_k$. Suitable trigonometric formulae yield
$$
|f_{\infty}(k \pi+ \epsilon_k e^{i t})|^2= |f_{\infty}(\epsilon_k e^{i t})|^2 
=\cosh^2 (\epsilon_k \sin t)- \cos^2(\epsilon_k \cos t).
$$
Hence we have
$$|f_{\infty}(k \pi+ \epsilon_k e^{i t})|^2\geq \cosh (\epsilon_k \sin t)- \cos(\epsilon_k \cos t).$$
Using the inequalities 
$$\cosh(x)\geq 1+\dfrac{x^2}{2}\quad\hbox{and}\quad \cos(x)\leq 1-\dfrac{x^2}{2}+\dfrac{x^4}{24},\qquad \forall x\in [0,1],
$$ 
we obtain
$$|f_{\infty}(2 k \pi+ \epsilon_k e^{i t})|^2\geq 
\dfrac{1}{2}\epsilon_k^2-\dfrac{1}{24}\epsilon_k^4 \cos^4 t \geq 
\dfrac{1}{2}\epsilon_k^2-\dfrac{1}{24}\epsilon_k^4\geq \dfrac{11}{24}\epsilon_k^2.
$$
On the other hand
$$|r(k \pi+ \epsilon_k e^{i t})| \leq \dfrac{4}{k\pi- 1}.$$ 
Hence if we take $\epsilon_k={\dfrac{C}{k}},$ with $C$ chosen large enough that 
$$\sqrt{\dfrac{11}{24}}\epsilon_k > \dfrac{4}{k\pi- 1},\qquad\forall k\geq 1,$$
 we have 
$$|r(z)|<|f_{\infty}(z)|, \qquad \forall z\in \partial B_k.$$ 

According to Rouch\'e's theorem, for $k$ large enough $f$ admits a unique root in the ball $B_k$, since $f_{\infty}$
has this property.
This proves~\eqref{asbehaviorodd}.

In order to prove (ii) we closely follow the arguments used to show (i).
By direct calculations we see that
$$
D_{K}(\lambda)=2\lambda (f_{\infty}(\lambda)+r(\lambda)),
$$
where 
$f_{\infty}(z):=1-\cos z$ is analytic and the remainder $r(z):=g_1(z) z^{-1}+\ldots+g_4(z) z^{-4}$ (the $g_i$ being finite linear combinations of $1$, $\cos \lambda$, $\sin \lambda$,
$\cos^2 \lambda$ and $\cos \lambda\sin \lambda$) 
is also analytic (except at $z=0$). The conclusion then follows by applying Rouch\'e's theorem in the ball $B_k=B_k(2k \pi,\epsilon_k)$ where $0<\epsilon_k \leq 1 $ is fixed appropriately.

Finally, (iii) and (iv) follow because in view of~\eqref{gamma1c1c2} and~\eqref{gamma2c1c2}, we see that
\begin{eqnarray*}
(-\bar y\;\; \bar x){\mathcal M}(\lambda) {\mathcal B}(\lambda)&=& \frac{1}{\lambda^2} (-\bar y\;\; \bar x)\begin{pmatrix}
{\lambda} \sin \lambda & {\lambda} (1-\cos \lambda)\\
1-\cos \lambda & \lambda -\sin \lambda
\end{pmatrix}\\
&=&\frac{1}{\lambda^2}
(- {\bar y}{\lambda} \sin \lambda + {\bar x} (1-\cos \lambda)\; \;\; - {\bar y} \lambda (1-\cos \lambda) + {\bar x} (\lambda -\sin \lambda) )
\\
&=&\frac{1}{\lambda^2}
(- {\bar y}{\lambda} \sin \lambda + {\bar x} (1-\cos \lambda)\; \;\; \lambda ({\bar x}- {\bar y}(1-\cos \lambda)) - {\bar x} \sin \lambda )
\end{eqnarray*}
while
\begin{eqnarray*}
(\bar x\;\; \bar y)K{\mathcal M}(\lambda) {\mathcal B}(\lambda)-{\mathcal C}(\lambda)
&=&
(\bar x\;\; \bar y)\left[\begin{pmatrix}
\frac{k_{11}}{\lambda} \sin \lambda +\frac{k_{12}}{\lambda^2}(1-\cos \lambda) & \frac{k_{11}}{\lambda} (1-\cos \lambda)+\frac{k_{12}}{\lambda^2}(\lambda -\sin \lambda)\\
\frac{k_{21}}{\lambda} \sin \lambda +\frac{k_{22}}{\lambda^2}(1-\cos \lambda) & \frac{k_{21}}{\lambda} (1-\cos \lambda)+\frac{k_{22}}{\lambda^2}(\lambda -\sin \lambda) 
\end{pmatrix}\right. \\
&&\qquad\qquad\qquad -
\left. \begin{pmatrix}
\lambda \sin \lambda-\cos \lambda &\lambda (1-\cos \lambda)-\sin \lambda\\
\cos \lambda - 1 & \sin \lambda 
\end{pmatrix}\right]
\\
&=&\frac{1}{\lambda^2} (\bar x\;\; \bar y)\left[\begin{pmatrix}
{k_{11}}{\lambda} \sin \lambda +{k_{12}}(1-\cos \lambda) & {k_{11}}{\lambda} (1-\cos \lambda)+{k_{12}}(\lambda -\sin \lambda)\\
{k_{21}}{\lambda} \sin \lambda +{k_{22}}(1-\cos \lambda) & {k_{21}}{\lambda} (1-\cos \lambda)+{k_{22}}(\lambda -\sin \lambda) 
\end{pmatrix}\right. \\
&&\qquad\qquad\qquad -
\left. \begin{pmatrix}
\lambda^3 \sin \lambda-\lambda^2\cos \lambda &\lambda^3 (1-\cos \lambda)-\lambda^2\sin \lambda\\
\lambda^2(\cos \lambda - 1) & \lambda^2\sin \lambda 
\end{pmatrix}\right]
\\
&=&\frac{1}{\lambda^2}
\left(\begin{array}{ll}
- {\bar x}(\lambda^3 \sin \lambda-\lambda^2 \cos \lambda)
+ {\bar y} \lambda^2 (1-\cos \lambda)+q_1(\lambda)\\
 -\bar x\lambda^3 (1-\cos \lambda)
+\bar x\lambda^2 \sin\lambda - {\bar y} \lambda^2 \sin \lambda +q_2(\lambda)
\end{array}
\right)^\top
\\
&=&\frac{1}{\lambda^2}
\left(\begin{array}{ll}
- {\bar x}\lambda^3 \sin \lambda+\lambda^2 ( {\bar x}\cos \lambda
+ {\bar y} (1-\cos \lambda))+q_1(\lambda)\\ -\bar x\lambda^3 (1-\cos \lambda)
+ \lambda^2 \sin\lambda (\bar x- {\bar y}) +q_2(\lambda)
\end{array}
\right)^\top,
\end{eqnarray*}
where $q_i(\lambda)$ is a polynomial in $\lambda$ of degree $\leq 1$ with coefficients which are 
 finite linear combinations of $1$, $\cos \lambda$, $\sin \lambda$, 
$\cos^2 \lambda$.
Therefore by~\eqref{Dalpha} we obtain that
\begin{eqnarray*}
D_{Y,K}(\lambda)&=&\frac{1}{\lambda^4}{\rm det}\;
\begin{pmatrix}
- {\bar y}{\lambda} \sin \lambda + {\bar x} (1-\cos \lambda)& \lambda ({\bar x}- {\bar y}(1-\cos \lambda)) - {\bar x} \sin \lambda \\
- {\bar x}\lambda^3 \sin \lambda+\lambda^2 ( {\bar x}\cos \lambda
+ {\bar y} (1-\cos \lambda))+q_1(\lambda)& -\bar x\lambda^3 (1-\cos \lambda)
+ \lambda^2 \sin\lambda (\bar x- {\bar y}) +q_2(\lambda))
\end{pmatrix}
\\
&=&\bar x \sin \lambda+2\lambda^{-1}(\bar y^2-\bar x \bar y-\bar x^2) (1-\cos\lambda)+
g_2(\lambda) \lambda^{-2}+\ldots+g_4(\lambda) \lambda^{-4},
\end{eqnarray*}
where the $g_i$ are finite linear combinations of $1$, $\cos \lambda$, $\sin \lambda$, 
$\cos^2 \lambda$, and $\cos \lambda\sin \lambda$.
Hence the arguments in the proof of (ii) yield the conclusion by dividing the cases $x=0$ and $x\not=0$.

In all above cases, an eigenvalue asymptotics of Weyl's type follows directly from~\eqref{asbehavioreven} and~\eqref{asbehaviorodd} (resp. ~\eqref{asbehaviora1}, ~\eqref{asbehavioraY=01}, ~\eqref{asbehaviora2}).
\end{proof}

\begin{rem}\label{rasindK}
In the case $Y=\{0\}^2$, using a Taylor expansion of higher order, we can even show that
$$
\lambda_{\{0\}^2, K,2k-1}=(2k-1)\pi-\dfrac{1}{4 (2k-1) \pi}+O(k^{-2}),\qquad\forall k\in {\mathbb N}^*.
$$
In the same manner, in the case $Y=\mathbb C^2$, we see that the highest order term of the asymptotic of the eigenvalues does not depend on $K$,
but lower order terms do.
Indeed using a Taylor expansion of higher order we can even show that there exist two real numbers $C_1$ and $C_2$
 (depending on $K$) such that
$$
\lambda_{\mathbb C^2,K, 2k-1}=2k \pi+\dfrac{C_1}{k}+O(k^{-2}), \qquad
\lambda_{\mathbb C^2,K, 2k}=2k\pi+\dfrac{C_2}{k}+O(k^{-2}),\qquad\forall k\in {\mathbb N}^*.
$$
\end{rem}

\begin{rem}
The eigenvalue asymptotics of Weyl's type in the third and fourth case of the previous Corollary is also a consequence of the 
comparison principle of Proposition~\ref{prop:specgener} and the Weyl-type formula for the first and second cases.
On the other hand formulas~\eqref{asbehavioraY=01} and~\eqref{asbehaviora2} (that cannot be deduced from the comparison principle) are more precise than the formula of Weyl's type.
As in Remark~\ref{rasindK}, the highest order term of the eigenvalue asymptotics does not depend on $K$,
but lower order terms do.
\end{rem}

\section{Dynamic integral conditions\label{sec6}}

We conclude our article by discussing the heat or wave equations complemented with the condition
$$\begin{pmatrix}
\mu_0(u(t))\\ \mu_1(u(t))
\end{pmatrix}\in Y,\qquad t\ge 0,$$
along with the dynamic-type one 
\begin{equation}
\label{dynmu1}
\frac{d}{dt}\begin{pmatrix}
\mu_0(u(t))\\ \mu_1(u(t))
\end{pmatrix}=-P_Y 
\begin{pmatrix}
\mu_0( u''(t)) +u(t,1)\\ u(t,0)-u(t,1)
\end{pmatrix}-P_Y K\begin{pmatrix}
\mu_0(u(t))\\ \mu_1(u(t))
\end{pmatrix},\qquad t\ge 0,
\end{equation}
for some subspace $Y$ of $\mathbb C^2$ and some $2\times 2$-matrix $K$.

An educated guess suggests to consider the same sesquilinear form $a_K$ defined in~\eqref{definform2}, but now defined on
$${\mathcal V}_Y:=\left\{\begin{pmatrix}
u\\ \phi
\end{pmatrix}\in V_Y\times Y: \begin{pmatrix}
\mu_0(u)\\ \mu_1(u)
\end{pmatrix}=\phi \right\}\equiv \left\{\begin{pmatrix}
u\\ \Gamma_1 u
\end{pmatrix}:u\in V_Y\right\}
,$$
for some $2\times 2$-matrix $K$. In fact, the following holds, where we are using the spaces $H_Y$ introduced in Section~\ref{sec:bouz}. We write
\[
{\mathcal H}:=H^{-1}(T)\times \mathbb C^2
\]
and denote by$(\cdot|\cdot)_{\mathcal H}$ the canonical inner product of the Hilbert product space $\mathcal H$, i.e.,
\[
({\bf u}|{\bf v})_{\mathcal H}:=(u|v)_{H^{-1}(T)}+(\phi |\psi)_{\mathbb C^2},\qquad \forall {\bf u}=\begin{pmatrix}
u\\ \phi
\end{pmatrix},{\bf v}=\begin{pmatrix}
v\\ \psi
\end{pmatrix}\in {\mathcal H}.
\]

\begin{lemma}
Let $Y$ be a subspace of $\mathbb C^2$. Then the closure of ${\mathcal V}_Y$ in $\mathcal H$ is 
\[
{\mathcal H}_Y:=
\left\{\begin{pmatrix}
g\\ (y_0,y_1)^\top
\end{pmatrix}\in H_Y\times Y: y_0=\mu_0(g)\right\}.
\]
\end{lemma}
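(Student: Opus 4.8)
The plan is to establish the two inclusions $\overline{\mathcal V_Y}\subseteq \mathcal H_Y$ and $\mathcal H_Y\subseteq \overline{\mathcal V_Y}$ separately. For the first it suffices to verify that $\mathcal H_Y$ is closed and contains $\mathcal V_Y$. Closedness is routine: $H_Y$ is closed in $H^{-1}(T)$ (being either the whole space or the kernel $H$ of the bounded functional $\mu_0$) and $Y$ is closed in $\mathbb C^2$, so $H_Y\times Y$ is closed in $\mathcal H$; the remaining constraint $y_0=\mu_0(g)$ cuts out the kernel of the bounded functional $(g,(y_0,y_1)^\top)\mapsto y_0-\mu_0(g)$, where I use that $\mu_0$ is bounded on $H^{-1}(T)$. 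The inclusion $\mathcal V_Y\subseteq \mathcal H_Y$ is then immediate from the definitions: for $u\in V_Y$ one has $\Gamma_1 u\in Y$, the first entry of $\Gamma_1 u$ is exactly $\mu_0(u)$, and $u\in H_Y$ (when $H_Y=H$ this follows because the first coordinate of every vector in $Y$ vanishes, forcing $\mu_0(u)=0$).

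The substance lies in the density direction. I would fix $(g,(y_0,y_1)^\top)\in \mathcal H_Y$, so that $g\in H_Y$, $(y_0,y_1)\in Y$ and $y_0=\mu_0(g)$, and use that $V_Y$ is dense in $H_Y$ (Corollary~\ref{cor2.5}) to pick $v_n\in V_Y$ with $v_n\to g$ in $H^{-1}(T)$. By boundedness of $\mu_0$ this already forces $\mu_0(v_n)\to\mu_0(g)=y_0$, so the sole remaining task is to steer $\mu_1(v_n)$ towards $y_1$. I would then split according to whether $(0,1)\in Y$. If $(0,1)\notin Y$, then $Y$ is $\{0\}^2$ or $\mathrm{Span}\,(1,\alpha)$, and the membership $\Gamma_1 v_n\in Y$ imposes exactly the linear relation between $\mu_0(v_n)$ and $\mu_1(v_n)$ that relates $y_1$ to $y_0$; passing to the limit via $\mu_0(v_n)\to y_0$ then yields $\mu_1(v_n)\to y_1$ automatically, so that $(v_n,\Gamma_1 v_n)\to(g,(y_0,y_1)^\top)$ with no further correction.

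The hard case, and the main obstacle, is $(0,1)\in Y$ (that is, $Y=\mathbb C^2$ or $Y=\{0\}\times\mathbb C$), where $y_1$ is free and must be produced by hand. Here I would exploit precisely the failure of continuity of $\mu_1$ from Lemma~\ref{lemmamu1notcontinuous}: normalising the sequence constructed in its proof yields functions $e_m\in L^2(0,1)$ with $\mu_0(e_m)=0$, $\mu_1(e_m)=1$ and $\|e_m\|_{H^{-1}(T)}\to 0$. Setting $w_n:=(y_1-\mu_1(v_n))\,e_{m(n)}$ with $m(n)$ chosen so large that $\|w_n\|_{H^{-1}(T)}<\tfrac1n$, the corrected sequence $u_n:=v_n+w_n$ satisfies $\mu_0(u_n)=\mu_0(v_n)$ and $\mu_1(u_n)=y_1$; since $\Gamma_1 w_n\in\{0\}\times\mathbb C\subseteq Y$ we retain $u_n\in V_Y$, while $u_n\to g$ in $H^{-1}(T)$ and $\Gamma_1 u_n\to(y_0,y_1)$. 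Thus $(u_n,\Gamma_1 u_n)\in\mathcal V_Y$ converges to $(g,(y_0,y_1)^\top)$ in $\mathcal H$, which completes the density argument and hence the proof. The delicate point throughout is that convergence in $\mathcal H$ requires control of $\mu_1$ even though it is unbounded on $H^{-1}(T)$; it is exactly this unboundedness that both creates the difficulty and, through the functions $e_m$, supplies the correction terms needed to overcome it.
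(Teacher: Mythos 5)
Your argument is correct, and it takes a genuinely different route from the paper's. The paper proves the lemma by reducing each type of subspace $Y$ to the abstract graph-closure result of \cite[Lemma~5.6]{MugRom06}, with case-dependent identifications: for $Y=\{0\}^2$ or $Y=\{0\}\times\mathbb C$ it applies that lemma with $L=\Gamma_1$, $X_2=H$ and dense kernel $V$; for $Y=\mathbb C\times\{0\}$ or $Y=\mathbb C^2$ it first identifies pairs $(g,\mu_0(g))$ with $g$ and applies the lemma to $L=\mu_1$, whose kernel $V_{\mathbb C\times\{0\}}$ is dense in $H^{-1}(T)$ by Corollary~\ref{cor2.5}; and for $Y=\mathrm{Span}\,(1,\alpha)^\top$ with $\alpha\neq0$ it observes ${\mathcal V}_Y\simeq V_Y$ and the candidate closure $\simeq H^{-1}(T)$, so Corollary~\ref{cor2.5} finishes directly. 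You instead argue by hand, uniformly in $Y$, with a single dichotomy on whether $(0,1)^\top\in Y$: in the rigid case the linear relation $y_1=\alpha y_0$ (or $y_0=y_1=0$) carried by $\Gamma_1 v_n\in Y$ passes to the limit through the bounded functional $\mu_0$, while in the free case you manufacture the $\mu_1$-coordinate by normalizing the explicit sequence $2\pi\sum_{k\le m}\sin(2k\pi\,\cdot)$ from the proof of Lemma~\ref{lemmamu1notcontinuous} into correctors $e_m$ with $\mu_0(e_m)=0$, $\mu_1(e_m)=1$ and $\|e_m\|_{H^{-1}(T)}\to0$, so that $u_n=v_n+(y_1-\mu_1(v_n))e_{m(n)}$ stays in $V_Y$ (since $\Gamma_1$ of the corrector lies in $\{0\}\times\mathbb C\subseteq Y$) and hits $\mu_1(u_n)=y_1$ exactly. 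All steps check out, including the easy inclusion via closedness of ${\mathcal H}_Y$ (boundedness of $\mu_0$ on $H^{-1}(T)$) and the verification that $u\in H_Y$ for $u\in V_Y$ when $H_Y=H$. Conceptually both proofs rest on the same phenomenon -- the discontinuity of $\mu_1$ on $H^{-1}(T)$ -- but the paper exploits it indirectly, through the density statements of Corollary~\ref{cor2.5} and the cited abstract Wentzell-type lemma (whose proof corrects the boundary coordinate exactly via surjectivity plus a dense kernel), whereas you exploit it directly through small-norm, unit-$\mu_1$ perturbations. The paper's route buys brevity and reuse of an established lemma; yours buys a self-contained, transparent argument that avoids the case-by-case isomorphism bookkeeping and the external reference.
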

\begin{proof}
The proof is performed by first considering separately the cases where $Y$ is a Cartesian product.

1) Let us first consider the case of $Y=\{0\}^2$ or $Y=\{0\}\times \mathbb C$. Then the assertion can be proved letting
\[
L:=\Gamma_1,\qquad X_1:=V_Y,\qquad X_2:=H,\qquad Y_1=Y_2=Y.
\]
Observe that $L:V_Y\to Y$ is a bounded and (by Remark~\ref{surjmu}) surjective operator. By Corollary~\ref{cor2.5}
\[
\ker L=\left\{g\in V_Y: \mu_0(g)=\mu_1(g)=0\right\}\equiv V
\]
is dense in $X_2=H$, thus the claim follows directly from~\cite[Lemma~5.6]{MugRom06}.

2) Let now $Y={\mathbb C}\times \{0\}$ or $Y=\mathbb C^2$.
This time, the assertion can be proved applying~\cite[Lemma~5.6]{MugRom06} with
\[
L:{\mathbb V}_Y\ni \begin{pmatrix}
g\\ y_0
\end{pmatrix}\mapsto \mu_1(g)\in \mathbb C.
\]
and
\[
X_1:={\mathbb V}_Y,\quad X_2:={\mathbb H}_1,\quad Y_1=Y_2=\{0\} \hbox{ or }=\mathbb C,
\]
respectively, where
\[
{\mathbb V}_Y:=\left\{\begin{pmatrix}
g\\ y_0
\end{pmatrix}\in V_Y\times {\mathbb C}: y_0=\mu_0(g)\right\}\simeq V_Y,\qquad 
{\mathbb H}_1:=\left\{\begin{pmatrix}
g\\ y_0
\end{pmatrix}\in H^{-1}(T)\times {\mathbb C}: y_0=\mu_0(g)\right\}\simeq H^{-1}(T).
\]
The identifications are performed with respect to the isomorphism
\[
\begin{pmatrix}
g\\ \mu_0(g)
\end{pmatrix}\mapsto g.
\]
In fact, ${\mathcal V}_Y$ can be written as
\[
{\mathcal V}_Y=\left\{\begin{pmatrix}
x\\ y
\end{pmatrix}\in {\mathbb V}_Y\times \mathbb C:Lx=y
\right\}
\]
and we also have that
\[
\left\{\begin{pmatrix}
g\\ (y_0,y_1)^\top
\end{pmatrix}\in H^{-1}(T)\times Y: y_0=\mu_0(g)\right\}={\mathbb H}_1\times \left\{
\begin{array}{l}
\{0\},\quad \hbox{or}\\
\mathbb C,
\end{array}
\right.
\]
respectively. Also in this setting $L$ is a bounded and surjective operator, and moreover
\[
\ker L=\left\{\begin{pmatrix}
g\\ y_0
\end{pmatrix}\in L^2(0,1)\times {\mathbb C}: y_0=\mu_0(g),\; \mu_1(g)=0\right\}\simeq V_{\mathbb C\times \{0\}}
\]
is dense in $X_2\simeq H^{-1}(T)$ by Corollary~\ref{cor2.5}.

3) Finally, let $Y$ be the subspace spanned by a vector
$\begin{pmatrix}
1\\ \alpha
\end{pmatrix}$
for some $\alpha \not=0$. Then
\begin{eqnarray*}
\left\{\begin{pmatrix}
g\\ (y_0,y_1)^\top
\end{pmatrix}\in H^{-1}(T)\times Y: y_0=\mu_0(g)\right\}&=&
\left\{\begin{pmatrix}
g\\ (y_0,y_1)^\top
\end{pmatrix}\in H^{-1}(T)\times {\mathbb C}^2: y_0=\mu_0(g), y_1=\alpha y_0\right\}\\
&\simeq&\left\{\begin{pmatrix}
g\\ y_0
\end{pmatrix}\in H^{-1}(T)\times {\mathbb C}: y_0=\mu_0(g)\right\}\\
&\simeq& H^{-1}(T)
\end{eqnarray*}
whereas with the notation introduced in 2)
\begin{eqnarray*}
{\mathcal V}_Y 
&=& 
\left\{ 
\begin{pmatrix}
u\\ \phi 
\end{pmatrix}\in V_Y\times \mathbb C^2:\Gamma_1(u)=\phi \in Y
\right\}\\
&=& 
\left\{ 
\begin{pmatrix}
x\\ y
\end{pmatrix}\in {\mathbb V}_Y\times \mathbb C:Lx=y
\right\}\\
&\simeq & {\mathbb V}_Y\simeq V_Y.
\end{eqnarray*}
Since $V_Y$ is dense in $H^{-1}(T)$, the claim follows.
\end{proof}

As in the previous sections the form $a_K$ with domain ${\mathcal V}_Y$ is bounded and $\mathcal H_Y$-elliptic. It satisfies the Crouzeix condition. It is accretive (resp., coercive) if both eigenvalues of $K$ have positive (resp., strictly positive) real part. It is symmetric if and only if $K$ is hermitian. Hence we obtain the following result.
\begin{prop}
Let $Y$ be a subspace of $\mathbb C^2$ and let $K$ be a $2\times 2$-matrix. Then the operator $-{\mathcal A}_{Y,K}$ associated with the form generates an analytic semigroup $(e^{-t{\mathcal A}_{Y,K}})_{t\ge 0}$ of angle $\frac{\pi}{2}$ on ${\mathcal H}_Y$ and also a cosine operator function with phase space ${\mathcal V}_Y\times {\mathcal H}_Y$. This semigroup is contractive (resp., exponentially stable) if both eigenvalues of $K$ have positive (resp., strictly positive) real part.
\end{prop}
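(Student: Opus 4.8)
The plan is to reproduce verbatim the argument used for Corollary~\ref{prep1}, now carried out on the Gelfand triple ${\mathcal V}_Y\hookrightarrow {\mathcal H}_Y\hookrightarrow {\mathcal V}_Y'$ attached to the product space ${\mathcal H}=H^{-1}(T)\times\mathbb C^2$. The one ingredient that genuinely needs the dynamic setting is the density of ${\mathcal V}_Y$ in ${\mathcal H}_Y$ required to invoke the form method, and this is precisely the content of the preceding Lemma. Consequently the Lax--Milgram construction applies and produces a well-defined operator ${\mathcal A}_{Y,K}$ associated with $a_K$ and form domain ${\mathcal V}_Y$; it then remains only to verify the four form properties relative to ${\mathcal H}_Y$ and to quote the same two abstract theorems as before.

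First I would record that, under the identification $\binom{u}{\Gamma_1 u}\mapsto u$ supplied by the definition of ${\mathcal V}_Y$, the ${\mathcal V}_Y$-norm is equivalent to $\|u\|_{L^2}$, while $\|\binom{u}{\Gamma_1 u}\|_{\mathcal H}^2=\|u\|_{H^{-1}(T)}^2+|\Gamma_1 u|^2$. With this in hand, boundedness of $a_K$ and the Crouzeix estimate follow exactly as in the discussion after~\eqref{definform2}: indeed $|\Ima a_K(\mathbf u,\mathbf u)|=|\Ima (K\Gamma_1 u|\Gamma_1 u)_{\mathbb C^2}|\lesssim |\Gamma_1 u|^2\lesssim \|u\|_{L^2}\|u\|_{H^{-1}(T)}\lesssim \|\mathbf u\|_{{\mathcal V}_Y}\|\mathbf u\|_{{\mathcal H}_Y}$ by Lemma~\ref{lemmamu1weakcontinuous}. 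For ${\mathcal H}_Y$-ellipticity the product setting is if anything more convenient than the one of Corollary~\ref{prep1}, because the term $|\Gamma_1 u|^2$ now appears \emph{explicitly} in the ${\mathcal H}_Y$-norm; thus $\Real a_K(\mathbf u,\mathbf u)+\omega\|\mathbf u\|_{{\mathcal H}_Y}^2\ge \|u\|_{L^2}^2+(\omega-C)|\Gamma_1 u|^2+\omega\|u\|_{H^{-1}(T)}^2\ge \alpha\|\mathbf u\|_{{\mathcal V}_Y}^2$ as soon as $\omega$ dominates the negative part of $K$. Finally accretivity (resp., coercivity) under the stated spectral hypothesis on $K$ is immediate from $\Real(K\Gamma_1 u|\Gamma_1 u)_{\mathbb C^2}\ge 0$ (resp., $\ge \delta|\Gamma_1 u|^2$).

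Having checked these properties, I would apply~\cite[Thm.~5]{Cro04} to conclude that ${\mathcal A}_{Y,K}$ generates a cosine operator function with phase space ${\mathcal V}_Y\times{\mathcal H}_Y$, and then~\cite[Thm.~3.14.17]{AreBatHie01} to upgrade this to an analytic semigroup $(e^{-t{\mathcal A}_{Y,K}})_{t\ge 0}$ of angle $\frac{\pi}{2}$ on ${\mathcal H}_Y$. Contractivity is then the standard consequence of accretivity of $a_K$, and uniform exponential stability follows from coercivity (which forces the bottom of the spectrum of ${\mathcal A}_{Y,K}$ to be strictly positive).

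The main obstacle I anticipate is not analytic but one of correct bookkeeping: one must make sure that the scalar form $a_K$ is interpreted consistently on pairs $\binom{u}{\phi}\in{\mathcal V}_Y$ (so that the second argument reads $(K\phi|\psi)_{\mathbb C^2}=(K\Gamma_1 u|\Gamma_1 v)_{\mathbb C^2}$), and that the ${\mathcal H}_Y$-inner product---which couples the $H^{-1}(T)$- and $\mathbb C^2$-components only through the constraint $y_0=\mu_0(g)$---yields exactly the operator realizing the dynamic condition~\eqref{dynmu1}. Once the identification ${\mathcal V}_Y\simeq V_Y$ and the density Lemma are in place, every estimate is inherited from Sections~\ref{sec:bouz}--\ref{sec:homog} and no genuinely new computation is needed.
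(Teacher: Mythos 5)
Your proposal is correct and follows essentially the same route as the paper: the authors likewise dispose of this Proposition by noting (in the paragraph preceding it) that $a_K$ with domain ${\mathcal V}_Y$ is bounded, ${\mathcal H}_Y$-elliptic, satisfies the Crouzeix condition, and is accretive (resp.\ coercive) under the stated hypothesis on $K$, after which density of ${\mathcal V}_Y$ in ${\mathcal H}_Y$ (the preceding Lemma) and the same two citations as in Corollary~\ref{prep1} -- \cite[Thm.~5]{Cro04} for the cosine operator function with phase space ${\mathcal V}_Y\times{\mathcal H}_Y$ and \cite[Thm.~3.14.17]{AreBatHie01} for the analytic semigroup of angle $\frac{\pi}{2}$ -- yield the claim. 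Your added observation that ellipticity is even easier here because $|\Gamma_1 u|^2$ appears explicitly in the ${\mathcal H}_Y$-norm is a correct refinement consistent with the paper's ``as in the previous sections'' argument.
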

Again, $(e^{-t{\mathcal A}_{Y,K}})_{t\ge 0}$ is immediately of trace class for all $Y$ and $K$.
\begin{rem}
Let $Y$ be a subspace of $\mathbb C^2$ and let $K$ be a hermitian $2\times 2$-matrix. Then both operators $A_{Y,K}$ and ${\mathcal A}_{Y,K}$ are self-adjoint, and it follows by a direct application of Courant's minimax formula that for all $k\in \mathbb N$ the $k^{\rm th}$ eigenvalue of $A_{Y,K}$ is at least as large as the $k^{\rm th}$ eigenvalue of ${\mathcal A}_{Y,K}$.
\end{rem}

Hence, it only remains to identify the operator ${\mathcal A}_{Y,K}$. If $Y=\{0\}^2$, then it is apparent that the above conditions reduce to those in~\eqref{nlbc}, which have already been fully discussed in Section~\ref{sec:homog}. Otherwise, the following result holds.
We omit the technically involved proof, which can be performed along the lines of Theorem~\ref{identK3}.

\begin{theo}\label{identK4}
Let $Y$ be a subspace of $\mathbb C^2$, $Y\not=\{0\}^2$, and let $K$ be a $2\times 2$-matrix. Then the operator matrix ${\mathcal A}_{Y,K}$ associated with the form $a_K$ with domain ${\mathcal V}_Y$ is given by
\begin{eqnarray*}
D({\mathcal A}_{Y,K})&=&\left\{\begin{pmatrix}
u\\ \Gamma_1 u
\end{pmatrix}\in {\mathcal V}_Y: u\in H^1(0,1)	\right\},\\
{\mathcal A}_{Y,K}&=&
\begin{pmatrix}
-\Id _m^{-1} \frac{d^2}{dx^2} -\delta_1 c(\cdot) & 0\\
P_Y\begin{pmatrix}
\delta_1(\cdot) + c(\cdot)\\ 
\delta_0(\cdot) - \delta_1(\cdot)
\end{pmatrix} & P_Y K
\end{pmatrix}
\end{eqnarray*}
where $c:H^1(0,1)\to \mathbb C$ is a bounded linear functional defined by
\begin{equation}\label{cyuniq}
c(u):=\left\{
\begin{array}{ll}
0, & \hbox{if }Y=\{0\}\times {\mathbb C}^2,\\
-\left(1+\left(P_Y\begin{pmatrix}1\\0\end{pmatrix}\Big|\begin{pmatrix}1\\0\end{pmatrix}\right)_{\mathbb C^2}\right)^{-1}
\left(\left(K\Gamma_1 u+
\begin{pmatrix}
u(1) \\
u(0)-u(1)
\end{pmatrix}\Big| \begin{pmatrix}
1 \\
0
\end{pmatrix}\right)_{\mathbb C^2}\right), \qquad &\hbox{otherwise}.
\end{array}
\right.
\end{equation}
\end{theo}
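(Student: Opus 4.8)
The plan is to mimic the proof of Theorem~\ref{identK3}: the interior part of the argument carries over essentially verbatim, and the only genuinely new ingredients are the second, finite-dimensional component $\psi$ of the image vector and the scalar functional $c$. Throughout I write ${\mathbf u}=(u,\Gamma_1 u)^\top\in{\mathcal V}_Y$, I abbreviate $e_0:=\begin{pmatrix}1\\0\end{pmatrix}$ and $w:=K\Gamma_1 u+\begin{pmatrix}u(1)\\u(0)-u(1)\end{pmatrix}$, and for an image ${\mathbf g}=(g,\psi)^\top\in{\mathcal H}_Y$ I record that by the very definition of ${\mathcal H}_Y$ one has $\psi\in Y$ and $\psi_0=\mu_0(g)$.

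First I would extract the interior equation. Given ${\mathbf u}\in D({\mathcal A}_{Y,K})$ with image ${\mathbf g}$, I test the identity $a_K({\mathbf u},{\mathbf h})=({\mathbf g}|{\mathbf h})_{\mathcal H}$ only against those ${\mathbf h}=(h,\Gamma_1 h)^\top$ with $h\in V$, so that $\Gamma_1 h=0$ kills both the $K$-term and the pairing $(\psi|\Gamma_1 h)_{\mathbb C^2}$. What survives is exactly $(u|h)_{L^2}=(Pg|Ph)_{L^2}$ for all $h\in V$, which is the very identity treated in Theorem~\ref{identK3}. Hence the same computation -- integrating by parts through Corollary~\ref{rkpf} and projecting with $\Pi$ onto ${\mathbb P}_1(\mathbb R)$ -- gives that $u\in H^1(0,1)$, that $g=-u''$ in ${\mathcal D}'(0,1)$, and therefore, by~\eqref{leqiso}, that $g=-\Id_m^{-1}(u'')-c\,\delta_1$ with $c=-\mu_0(g)$. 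This proves the first row of the operator matrix and the inclusion $D({\mathcal A}_{Y,K})\subseteq\{(u,\Gamma_1 u):u\in H^1(0,1)\}$.

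Next I would recover the finite-dimensional data and $c$. Testing the identity now against \emph{all} ${\mathbf h}\in{\mathcal V}_Y$ and substituting the formula for $g$ just found, a single application of the integration-by-parts formula of Lemma~\ref{intbp} collapses it to $\bigl(w+c\,e_0-\psi\,\big|\,\Gamma_1 h\bigr)_{\mathbb C^2}=0$ for every $h\in V_Y$. Since $\Gamma_1$ maps $V_Y$ onto $Y$ by Remark~\ref{surjmu} and $\psi\in Y$, this forces $\psi=P_Y(w+c\,e_0)$, which is precisely the second component of the claimed operator matrix. To pin down $c$ I would impose the compatibility constraint $\psi_0=\mu_0(g)=-c$ built into ${\mathcal H}_Y$: pairing $\psi=P_Y(w+c\,e_0)$ with $e_0$ and using self-adjointness of $P_Y$ yields the scalar equation $(w|P_Y e_0)_{\mathbb C^2}+c\,(P_Y e_0|e_0)_{\mathbb C^2}=-c$, whose coefficient $1+(P_Y e_0|e_0)_{\mathbb C^2}\ge 1$ is never zero; its unique solution is the functional in~\eqref{cyuniq}. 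In the borderline case $Y=\{0\}\times\mathbb C$ one has $H_Y=H$, so $\mu_0(g)=0$ forces $c=0$ outright, again matching~\eqref{cyuniq}.

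Finally, for the converse inclusion I would start from $u\in H^1(0,1)$ with $\Gamma_1 u\in Y$, set $c:=c(u)$ as in~\eqref{cyuniq}, $g:=-\Id_m^{-1}(u'')-c\,\delta_1$ and $\psi:=P_Y(w+c\,e_0)$, and run the computation backwards: Lemma~\ref{intbp} shows $a_K({\mathbf u},{\mathbf h})=({\mathbf g}|{\mathbf h})_{\mathcal H}$ for all ${\mathbf h}\in{\mathcal V}_Y$, while the defining equation for $c$ guarantees $\psi_0=\mu_0(g)$, i.e.\ $(g,\psi)^\top\in{\mathcal H}_Y$. Thus ${\mathbf u}\in D({\mathcal A}_{Y,K})$ and ${\mathcal A}_{Y,K}{\mathbf u}=(g,\psi)^\top$, as claimed. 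I expect the main obstacle to be the \emph{disentangling} of $g$ from $\psi$: because the test vectors ${\mathbf h}=(h,\Gamma_1 h)^\top$ have rigidly coupled components, one scalar identity must simultaneously encode the interior equation for $g$ and the finite-dimensional relation for $\psi$. The two-stage testing above resolves this -- isolate $g$ first on the subclass $h\in V$, and only afterwards extract $\psi$, together with the scalar equation fixing $c$, from the full class $h\in V_Y$.
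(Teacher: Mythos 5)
Your overall strategy is exactly the one the paper points to: the authors omit the proof of Theorem~\ref{identK4} entirely, stating only that it ``can be performed along the lines of Theorem~\ref{identK3}'', and your proposal implements precisely that plan. The two-stage testing is sound: restricting to ${\mathbf h}=(h,0)^\top$ with $h\in V$ (which also kills the term $\mu_0(g)\overline{\mu_0(h)}$ of the $H^{-1}(T)$-inner product, as you implicitly use) reproduces the interior argument of Theorem~\ref{identK3} and yields $u\in H^1(0,1)$ and $g=-\Id_m^{-1}(u'')-c\,\delta_1$ with $c=-\mu_0(g)$; testing against all $h\in V_Y$ and applying Lemma~\ref{intbp} together with the surjectivity of $\Gamma_1\colon V_Y\to Y$ correctly gives $\psi=P_Y(w+c\,e_0)$; the constraint $\psi_0=\mu_0(g)=-c$ built into ${\mathcal H}_Y$ is the right source of the scalar equation; and the converse inclusion and the special case $Y=\{0\}\times\mathbb C$ are handled correctly.

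The one genuine problem is your final sentence of the second paragraph. Your scalar equation $(w|P_Ye_0)_{\mathbb C^2}+c\,(P_Ye_0|e_0)_{\mathbb C^2}=-c$ is correct, but its unique solution is $c=-\bigl(1+(P_Ye_0|e_0)_{\mathbb C^2}\bigr)^{-1}(w|P_Ye_0)_{\mathbb C^2}$, which is \emph{not} the functional in~\eqref{cyuniq}: there $w$ is paired with $e_0$, not with $P_Ye_0$. The two expressions coincide exactly when $e_0\in Y$ (i.e.\ $Y=\mathbb C^2$ or $Y=\mathbb C\times\{0\}$) or $e_0\perp Y$ (i.e.\ $Y=\{0\}\times\mathbb C$, where both give $c=0$), but they differ for $Y=\mathrm{Span}\,(1,\alpha)^\top$ with $\alpha\neq 0$. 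A concrete check: take $Y=\mathrm{Span}\,(1,1)^\top$, $K=0$, $u(x)=3x-2$, so that $\Gamma_1u=(-\tfrac12,-\tfrac12)^\top\in Y$, $u''=0$ and $w=(1,-3)^\top$; solving $a_K({\mathbf u},{\mathbf h})=({\mathbf g}|{\mathbf h})_{\mathcal H}$ directly over $h\in V_Y$ (where $\mu_0(h)=\mu_1(h)$) forces $c=\tfrac23$, which is what your projected formula gives, whereas \eqref{cyuniq} as printed gives $-\tfrac23$. So your derivation is in fact the correct one and exposes what appears to be a typographical slip in the published statement (a missing projection $P_Y$ in the pairing with $w$, alongside the evident misprint ``$\{0\}\times\mathbb C^2$''); but as written, your proof simply asserts that your equation's solution ``is the functional in~\eqref{cyuniq}'', which is false in general. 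You should either note the discrepancy and state the corrected formula, or restrict the claimed identification to the cases $e_0\in Y$ or $e_0\perp Y$ where it holds.
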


Reasoning as in Theorem~\ref{identVAK} we see that whenever 
\[
\begin{pmatrix}
u\\ \Gamma_1 u
\end{pmatrix}\in D({\mathcal A}_{Y,K})
\]
with $u\in H^2(0,1)$, then $\mathcal A_{Y,K}$ acts on its first component as the second derivative. Thus, the following holds.
\begin{cor}\label{againclassic}
Let $K$ be hermitian and $Y$ be a subspace of $\mathbb C^2$. Then the semigroup $(e^{-t{\mathcal A}_{Y,K}})_{t\ge 0}$ on ${\mathcal H}_Y$ leaves ${\mathcal V}_Y$ invariant and its restriction is a semigroup on ${\mathcal V}_Y$ that is analytic of angle $\frac{\pi}{2}$ and immediately of trace class. Its generator is the part ${\mathcal A}_{Y,K}^{{\mathcal V}_Y}$ of ${\mathcal A}_{Y,K}$ in ${\mathcal V}_Y$, which is explicitly given by
\begin{eqnarray*}
D({\mathcal A}_{Y,K}^{{\mathcal V}_Y})&=&\left\{\begin{pmatrix}u\\ \phi\end{pmatrix}\in D({\mathcal A}_{Y,K}):u\in H^2(0,1)\right\},\\
A_{Y,K}^{V_Y}&=&
\begin{pmatrix}
-\frac{d^2}{dx^2} & 0\\
P_Y(K \Gamma_1 + \Gamma_2) & 0
\end{pmatrix}.
\end{eqnarray*}
\end{cor}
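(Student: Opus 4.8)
The plan is to mirror the proof of Theorem~\ref{identVAK}, now building on the description of $\mathcal{A}_{Y,K}$ supplied by Theorem~\ref{identK4} in place of Theorem~\ref{identK3}. I would first dispatch the abstract assertions. Since $K$ is hermitian the form $a_K$ on $\mathcal{V}_Y$ is symmetric, and it has already been recorded that $a_K$ is bounded, $\mathcal{H}_Y$-elliptic and satisfies the Crouzeix estimate; hence the form theory recalled before Theorem~\ref{identVAK} (see~\cite[Ch.~1]{Ouh05} and~\cite[\S5.5.2]{Are04}) applies verbatim. It yields that the form domain $\mathcal{V}_Y$ is invariant under $(e^{-t\mathcal{A}_{Y,K}})_{t\ge 0}$, that the restriction is again analytic of angle $\frac{\pi}{2}$, and that its generator is by definition the part $\mathcal{A}_{Y,K}^{\mathcal{V}_Y}$. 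The immediate trace-class property is inherited exactly as in Corollary~\ref{prep1}, from the trace-class embedding $H^1(0,1)\hookrightarrow L^2(0,1)$ together with analyticity. No new idea enters here.

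The substance of the statement is thus the explicit identification of $\mathcal{A}_{Y,K}^{\mathcal{V}_Y}$. By definition its domain is $\{\mathbf{u}\in D(\mathcal{A}_{Y,K}): \mathcal{A}_{Y,K}\mathbf{u}\in \mathcal{V}_Y\}$, and by Theorem~\ref{identK4} each $\mathbf{u}\in D(\mathcal{A}_{Y,K})$ has the form $(u,\Gamma_1 u)^\top$ with $u\in H^1(0,1)$. Requiring the image to lie in $\mathcal{V}_Y$ forces, in particular, its first component $-\Id_m^{-1}(u'')-c(u)\delta_1$ to belong to $V_Y\subset L^2(0,1)$. As in the proof of Theorem~\ref{identVAK}, this membership is analysed in two strokes: since $-\Id_m^{-1}(u'')-c(u)\delta_1$ coincides with $-u''$ in $\mathcal{D}'(0,1)$, its being $L^2$ already gives $u\in H^2(0,1)$; then~\eqref{ajoutserge10} (equivalently Lemma~\ref{lemmasecondderivative2}) shows that the residual Dirac mass cancels only if $c(u)=u'(1)-u'(0)=\mu_0(u'')$, whereupon the first component collapses to $-u''$. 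This reproduces both the announced domain $\{(u,\phi)^\top\in D(\mathcal{A}_{Y,K}): u\in H^2(0,1)\}$ and the first row $-\frac{d^2}{dx^2}$.

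It remains to read off the second coordinate, and here I would substitute $c(u)=\mu_0(u'')$ into the lower-left block of the operator matrix of Theorem~\ref{identK4} and identify the resulting $\mathbb{C}^2$-vector by means of the definition~\eqref{gammagrand}, so that the block reduces to $P_Y(K\Gamma_1+\Gamma_2)$, in the sign convention of Theorem~\ref{identVAK}. The step I expect to be the main obstacle is precisely the bookkeeping around the functional $c$: one must verify that the \emph{a priori} value prescribed by~\eqref{cyuniq} is compatible with the value $\mu_0(u'')$ dictated by the $L^2$-reduction above, in the same way that $c=\mu_0(u'')$ was forced in Theorem~\ref{identVAK}. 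A second delicate point is the internal consistency that $\mathcal{A}_{Y,K}\mathbf{u}$ genuinely lands in $\mathcal{V}_Y$, i.e.\ that the computed second coordinate equals $\Gamma_1$ of the first; this, however, need not be checked by hand, as it is nothing but the invariance of the form domain already secured by the abstract theory in the first step. The remaining manipulations being entirely parallel to those of Theorem~\ref{identVAK}, I would present them only in outline, as the authors indeed propose.
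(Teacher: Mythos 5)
Your route is the one the paper itself takes: the paper offers no separate proof of Corollary~\ref{againclassic}, merely the pointer ``reasoning as in Theorem~\ref{identVAK}'' applied to the description of ${\mathcal A}_{Y,K}$ in Theorem~\ref{identK4}. Both your abstract step (symmetry of $a_K$ for hermitian $K$, invariance of the form domain, analyticity of angle $\frac{\pi}{2}$ of the restriction, trace class via the embedding $H^1(0,1)\hookrightarrow L^2(0,1)$ as in Corollary~\ref{prep1}) and your reduction of the first component (membership of $-\Id_m^{-1}(u'')-c(u)\delta_1$ in $L^2(0,1)$ forces $u\in H^2(0,1)$ via ${\mathcal D}'(0,1)$, and then~\eqref{ajoutserge10} forces $c(u)=\mu_0(u'')$, collapsing the action to $-u''$; note that with the definition~\eqref{gammagrand} the lower-left block then literally reads $P_Y(K\Gamma_1-\Gamma_2)$, i.e.\ the sign of Lemma~\ref{DA2} and~\eqref{heat3}, so your hedge about ``the sign convention of Theorem~\ref{identVAK}'' is warranted) are exactly what that pointer unpacks to.

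The gap lies in how you dispose of your two ``delicate points''. Invariance of ${\mathcal V}_Y$ under the semigroup states $e^{-t{\mathcal A}_{Y,K}}{\mathcal V}_Y\subseteq {\mathcal V}_Y$ and identifies the generator of the restriction abstractly as the part; it says nothing about whether a \emph{given} $(u,\Gamma_1 u)^\top\in D({\mathcal A}_{Y,K})$ with $u\in H^2(0,1)$ satisfies ${\mathcal A}_{Y,K}(u,\Gamma_1 u)^\top\in{\mathcal V}_Y$ --- that membership is the \emph{definition} of the part's domain and is precisely what has to be checked, so declaring it ``secured by the abstract theory'' is circular. Nor does the compatibility between~\eqref{cyuniq} and $\mu_0(u'')$ resolve itself ``as in Theorem~\ref{identVAK}'': there the forced equality $c=\mu_0(f'')$ materialized as \emph{additional conditions} in the domain (namely $K\Gamma_1 u+\Gamma_2 u\in Y^\perp$ and $\Gamma_1(u'')\in Y$), whereas here $c(u)$ is pre-assigned by~\eqref{cyuniq}, so its agreement with $\mu_0(u'')$ is a genuine linear constraint on $u$, not an identity. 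Concretely, take $Y=\mathbb{C}^2$, $K=0$ and $u\equiv 1$: then~\eqref{cyuniq} gives $c(u)=-\tfrac12$ while $\mu_0(u'')=0$, so the first component of ${\mathcal A}_{Y,K}(u,\Gamma_1 u)^\top$ equals $\tfrac12\delta_1\notin L^2(0,1)$, although $u$ is smooth; one checks directly from the form that $\bigl(\tfrac12\delta_1,(\tfrac12,0)^\top\bigr)$ is indeed the representing element. Hence $H^2$-regularity alone does not place $(u,\Gamma_1 u)^\top$ in the part's domain, and what your (and the paper's) computation actually yields is only the one-directional statement that \emph{on} the part's domain the operator acts as displayed. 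An honest completion must carry the compatibility constraint $c(u)=\mu_0(u'')$, together with the consistency of the second coordinate with $\Gamma_1$ of the first, explicitly into the domain --- exactly as Theorem~\ref{identVAK} does in its setting --- so the shortcut you take at precisely the step you yourself singled out as the main obstacle is where the argument breaks.
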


Now, by standard perturbation results one may deduce that in fact our heat equation is governed by an analytic semigroup on ${\mathcal H}_Y$ even if we replace~\eqref{dynmu1} by the more general condition 
$$\frac{d}{dt}\begin{pmatrix}
\mu_0(u(t))\\ \mu_1(u(t))
\end{pmatrix}=-Qu(t),\qquad t\ge 0,$$
for any bounded linear operator $Q:H^1(0,1)\to Y$. 

\begin{cor}\label{inhomogsemig}
Let $Y$ be a subspace of $\mathbb C^2$, $Y\not=\{0\}^2$. 
Let $R$ be a bounded linear operator on $H_Y$ 
and $Q$ be
\begin{itemize}
\item an arbitrary bounded linear operator from $H^1(0,1)$ to $Y$, if $Y=\{0\}\times \mathbb C$; or else
\item an arbitrary bounded linear operator from $H^1(0,1)$ to $Y$ such that 
\[
\left(Qu\Big|\begin{pmatrix}1 \\ 0\end{pmatrix} \right)= \mu_0( Ru),\quad \forall u\in H^1(0,1),
\]
if $Y\not=\{0\}\times \mathbb C$.
\end{itemize}
Then the operator matrix
\begin{eqnarray*}
D({\mathcal B})&:=&\left\{\begin{pmatrix}
u\\ \phi
\end{pmatrix}\in {\mathcal V}_Y: u\in H^1(0,1)\right\},\\
{\mathcal B}&:=&
\begin{pmatrix}
-\Id _m^{-1}\frac{d^2}{dx^2}+R & 0\\
Q & 0
\end{pmatrix},
\end{eqnarray*}
generates an analytic semigroup on ${\mathcal H}_Y$.
\end{cor}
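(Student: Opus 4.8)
The plan is to realise $\mathcal B$ as a perturbation of the operator matrix $\mathcal A_{Y,0}$ furnished by Theorem~\ref{identK4} with $K=0$, which we already know to generate an analytic semigroup of angle $\frac\pi2$ on $\mathcal H_Y$. Since the diagonal entry $-\Id_m^{-1}\frac{d^2}{dx^2}$ is common to both operators and the prescription defining $D(\mathcal A_{Y,0})$ coincides with that defining $D(\mathcal B)$, the two operators share the domain $D(\mathcal B)=D(\mathcal A_{Y,0})=\{(u,\Gamma_1 u)^\top:u\in H^1(0,1)\}$. Hence $\mathcal S:=\mathcal B-\mathcal A_{Y,0}$ is well defined on $D(\mathcal A_{Y,0})$, and there it acts in the first component by $u\mapsto Ru+\delta_1 c(u)$ and in the second by $u\mapsto Qu-P_Y\big(u(1)+c(u),\,u(0)-u(1)\big)^\top$, with $c$ the functional from~\eqref{cyuniq}. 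It then suffices to show that $\mathcal S$ is an admissible perturbation preserving analytic generation.

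First I would check that $\mathcal B$ indeed maps $D(\mathcal B)$ into $\mathcal H_Y$; this is the step that forces the hypothesis on $Q$. Writing $\mathcal B(u,\Gamma_1 u)=(g,Qu)$ with $g=-\Id_m^{-1}u''+Ru$, membership in $\mathcal H_Y$ requires $\big(Qu\mid(1,0)^\top\big)_{\mathbb C^2}=\mu_0(g)$. Because $\Id_m^{-1}u''\in H^{-1}(0,1)\subset H$ has vanishing mean, one has $\mu_0(g)=\mu_0(Ru)$, so the constraint reads $\big(Qu\mid(1,0)^\top\big)_{\mathbb C^2}=\mu_0(Ru)$, which is exactly the compatibility assumption imposed when $Y\neq\{0\}\times\mathbb C$; when $Y=\{0\}\times\mathbb C$ one has $H_Y=H$ and $Qu\in Y$, so both sides vanish identically and no condition is needed. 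The same computation shows that $\mathcal S$ takes its values in $\mathcal H_Y$.

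Next I would establish that $\mathcal S$ is $\mathcal A_{Y,0}$-bounded with relative bound zero. The graph norm of $D(\mathcal A_{Y,0})$ is equivalent to $\|u\|_{H^1(0,1)}$ (since $u\mapsto u''$ is bounded from $H^1$ into $H^{-1}(0,1)$, $\Id_m$ is isometric, and $c$ and $\delta_1$ contribute only bounded terms); consequently $R$, $Q$, the functional $c$, and — via the trace inequality~\eqref{traceineq} — the point evaluations $u(0),u(1)$ are all controlled by this norm, so $\mathcal S\colon D(\mathcal A_{Y,0})\to\mathcal H_Y$ is bounded. The decisive observation is that $\mathcal S$ is in fact $\mathcal A_{Y,0}$-compact. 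Viewing it into the ambient space $H^{-1}(T)\times\mathbb C^2$ (its range lying in the closed subspace $\mathcal H_Y$), split $\mathcal S$ into the part $(u,\phi)\mapsto\big(\delta_1 c(u),\,Qu-P_Y(u(1)+c(u),u(0)-u(1))^\top\big)$, whose range is finite-dimensional (inside $\mathrm{Span}\{\delta_1\}\times\mathbb C^2$) and which is therefore compact as a bounded finite-rank map out of $D(\mathcal A_{Y,0})$, and the remaining part $(u,\phi)\mapsto(Ru,0)$, which factors through the embedding $D(\mathcal A_{Y,0})\hookrightarrow\mathcal H_Y$. The latter embedding is compact, as $\mathcal A_{Y,0}$ has compact (indeed trace-class) resolvent. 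Thus $\mathcal S$ is $\mathcal A_{Y,0}$-compact, and an $\mathcal A_{Y,0}$-compact operator has relative bound zero.

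Finally, a perturbation of relative bound zero of the generator of an analytic semigroup again generates an analytic semigroup; applying this standard result to $\mathcal B=\mathcal A_{Y,0}+\mathcal S$ yields the claim. I expect the main obstacle to be the well-definedness step — pinning down precisely why the hypothesis on $Q$ is the correct one, through the identity $\mu_0(\Id_m^{-1}u'')=0$ — together with the relative-compactness argument, since the individual pieces of $\mathcal S$ are genuinely unbounded on $\mathcal H_Y$ and only the finite dimensionality of $Y$ and the trace estimate rescue the relative bound.
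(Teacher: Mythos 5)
Your proposal is correct and takes essentially the same route as the paper: the authors likewise realise $\mathcal B$ as a relatively compact perturbation of the generator $\mathcal A_{Y,K}$ from Theorem~\ref{identK4} (splitting the difference into a finite-rank piece $\mathcal B_0$ containing $\delta_1 c(\cdot)$ and the lower-left traces, plus $\mathcal B_1=\begin{pmatrix}R&0\\ Q&0\end{pmatrix}$ whose $Q$-part is relatively compact by finite-dimensionality of its range) and then invoke the Desch--Schappacher perturbation theorem~\cite{DesSch88}, exactly as you do via relative bound zero. Your explicit check that the compatibility hypothesis on $Q$ is precisely what makes $\mathcal B$ map $D(\mathcal B)$ into $\mathcal H_Y$, via $\mu_0(\Id_m^{-1}u'')=0$, is left implicit in the paper but is the same mechanism, not a different argument.
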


\begin{proof}
It suffices to write $\mathcal B$ as
$${\mathcal B}={\mathcal A}_{Y,K}+{\mathcal B}_0+{\mathcal B}_1,$$
where ${\mathcal B}_0:D({\mathcal A}_{Y,K})\to {\mathcal H}_Y$ is the relatively compact operator defined by
\[
{\mathcal B}_0
\begin{pmatrix}u\\ \Gamma_1 u
\end{pmatrix}:=\begin{pmatrix}
\delta_1 c(u)\\
-P_Y\left(K\Gamma_1 u+
\begin{pmatrix}
u(1)+c(u) \\
u(0)-u(1)
\end{pmatrix}\right)
\end{pmatrix},
\]
and $\mathcal B_1$ is a bounded linear operator on $\mathcal H_Y$ 
 defined by
\[
\mathcal B_1:=\begin{pmatrix}R & 0\\ Q & 0\end{pmatrix}.
\]
Now, $Q$ has finite dimensional range and hence $\mathcal B_1$ is relatively compact. 
Accordingly, it suffices to apply a well-known perturbation result~\cite{DesSch88} to conclude.
\end{proof}

\begin{rem}\label{rem:inhomog}
In the literature, inhomogeneous integral conditions of the form
$$\int_0^1 u(t,x)\; dx=E_1(t),\qquad \int_0^1 (1-x)u(t,x)\; dx=E_2(t),\qquad t> 0,$$
are often considered. 
Reasoning as in the proof of Lemma~\ref{DA2} we see that the first coordinate of any element of $D({\mathcal A}_Y^2)$ lies in particular in $H^2(0,1)$, hence the semigroup generated by ${\mathcal A}_Y^2$ maps immediately into functions whose first coordinate does not only satisfy the generalized heat equation~\eqref{heatidm}, but also the classical one.
In view of the known semigroup approach to evolution equations with inhomogeneous conditions (see e.g.~\cite[\S6.2]{AreBatHie01}), Corollary~\ref{inhomogsemig} allows us to consider general inhomogeneous conditions of the form
$$P_{Y^\perp}\Gamma_1(u(t))=E(t),\qquad t> 0,$$
along with~\eqref{heat2} for the inhomogeneous heat equation
$$\frac{\partial u}{\partial t}(t,x)=\frac{\partial^2 u}{\partial x^2}(t,x)+\psi(t),\qquad t> 0,\; x\in (0,1),$$
and to prove well-posedness of the associated evolution equation, provided some smoothness of the time dependence of $\psi,E$ is assumed ($\psi \in L^1({\mathbb R}_+,H_Y),E\in W^{1,1}({\mathbb R}_+,Y^\perp)$ will do for a mild solution, and $\psi \in W^{1,1}({\mathbb R}_+,H_Y),E\in W^{2,1}({\mathbb R}_+,Y^\perp)$ will even yield a classical solution, cf.~\cite[\S3]{KraMugNag03}). In fact, in view of Corollary~\ref{cor:specgener} the operator $A_{Y,K}$ is invertible as long as $K$ is positive semidefinite and therefore we can even write down an explicit formula for the solution (in dependence of the semigroup generated by $A_{Y,K}$), cf.~\cite[Prop.~3.9]{KraMugNag03}.
\end{rem}

\noindent{\bf Acknowledgement.}
The first author is partially supported by the Land Baden--W\"urttemberg in the framework of the \emph{Juniorprofessorenprogramm} -- research project on ``Symmetry methods in quantum graphs''. 

This article has been partly written while the first author was visiting Valenciennes. He would like to thank the University of Valenciennes for the hospitality and for partial financial support.
Both authors thank D. Mercier (Lamav, University of Valenciennes) for his help in the proof of Corollary~\ref{cWeylgenY}. They are grateful to the anonymous referee for careful reading and for many useful suggestions, both on form and content.

\bibliographystyle{abbrv}

\end{document}